\documentclass[11pt]{amsart}
\usepackage{amsmath,amssymb,amsthm}
\usepackage[all]{xy}
\usepackage{comment}

\newtheorem{theorem}{Theorem}[section]
\newtheorem{lemma}[theorem]{Lemma}
\newtheorem{proposition}[theorem]{Proposition}

\theoremstyle{definition}
\newtheorem{definition}[theorem]{Definition}
\newtheorem{remark}[theorem]{Remark}

\numberwithin{equation}{section}

\renewcommand{\phi}{\varphi}

\newcommand{\ep}{\varepsilon}

\newcommand{\Aut}{\operatorname{Aut}}
\newcommand{\Aff}{\operatorname{Aff}}
\newcommand{\Bott}{\operatorname{Bott}}
\newcommand{\Coker}{\operatorname{Coker}}
\newcommand{\Hom}{\operatorname{Hom}}
\newcommand{\Homeo}{\operatorname{Homeo}}
\newcommand{\id}{\operatorname{id}}
\newcommand{\Ima}{\operatorname{Im}}
\newcommand{\Iso}{\operatorname{Iso}}
\newcommand{\Ker}{\operatorname{Ker}}
\newcommand{\sgn}{\operatorname{sgn}}

\newcommand{\N}{\mathbb{N}}
\newcommand{\Z}{\mathbb{Z}}
\newcommand{\Q}{\mathbb{Q}}
\newcommand{\R}{\mathbb{R}}

\newcommand{\T}{\mathbb{T}}

\newcommand{\K}{\mathbb{K}}

\newcommand{\cG}{\mathcal{G}}
\newcommand{\cH}{\mathcal{H}}
\newcommand{\cM}{\mathcal{M}}
\newcommand{\cO}{\mathcal{O}}

\newcommand{\sfb}{\mathsf{b}}
\newcommand{\sF}{\mathsf{F}}

\title{HK and GL}
\author{Hiroki Matui}
\address{Graduate School of Mathematical Sciences, 
The University of Tokyo, Japan}
\email{hiroki@ms.u-tokyo.ac.jp}
\thanks{The author was supported by JSPS KAKENHI Grant Number 23K22397.}
\date{}

\begin{document}

\begin{abstract}
We study the HK conjecture and the gap-labelling problem 
for transformation groupoids associated with 
free actions of poly-$\Z$ groups on Cantor sets. 
The main tool is a comparison of 
the long exact sequences in groupoid homology and cohomology 
with the Pimsner--Voiculescu exact sequence 
for crossed products by $\Z$. 
In addition to the canonical homology comparison maps 
$\mu_0$ and $\mu_1$, 
we introduce cohomology comparison maps 
associated with suitable $K$-theory classes of the acting group. 
Together with Poincar\'e duality, 
these maps detect the higher homology terms 
occurring in the HK conjecture. 

We apply this method to free actions of poly-$\Z$ groups of 
small Hirsch length. 
For actions of $\Z$, $\Z^2$, and the Klein bottle group, 
we recover HK and gap-labelling. 
For several classes of groups of Hirsch length three and four, 
we either prove HK or obtain explicit exact sequences 
describing the $K$-groups 
in terms of groupoid homology and cohomology. 

For gap-labelling, 
we combine the de la Harpe--Skandalis determinant, 
the trace formula for the Pimsner--Voiculescu boundary map, and 
transposition decompositions in topological full groups. 
This gives gap-labelling up to a factor of two 
for all free actions of poly-$\Z$ groups of Hirsch length three 
and for certain groups of Hirsch length four, including $\Z^4$. 
We also recover gap-labelling for $\Z^3$-actions and 
prove gap-labelling up to a factor of two for $\Z^5$-actions 
by using cohomology comparison maps for mapping tori. 
\end{abstract}

\maketitle

\section{Introduction}

\subsection{What is HK?\nopunct}

A homology theory for ample groupoids was introduced 
in \cite{Ma12PLMS} and has since been used 
in the study of totally disconnected dynamical systems, 
topological full groups, and groupoid $C^*$-algebras. 
In this framework, the HK conjecture predicts 
a parity-preserving relationship between the $K$-theory of 
the reduced groupoid $C^*$-algebra and the homology groups of 
the groupoid: 
\[
K_i(C^*_r(\cG))\cong\bigoplus_{j\geq 0}H_{2j+i}(\cG),\qquad i=0,1. 
\]
An early indication of such a relationship can already be found 
in the work of Forrest and Hunton on $\Z^N$-actions 
on the Cantor set \cite{FH99ETDS}. 
The integral picture is, however, more delicate 
than one might expect from this early work, 
since torsion phenomena may occur. 

The conjecture was verified for groupoids 
arising from one-sided shifts of finite type in \cite{Ma15crelle}, 
and for products of such groupoids in \cite{Ma16Adv}. 
Since then, a number of positive results have been obtained 
in various directions 
\cite{FKPS19Munster,Li15JFA,Or20JNG,Yi20BullAust,BDGW23PLMS}. 
The works \cite{PY22ETDS,PY23JNG,PY2509arXiv} of 
Proietti and Yamashita have also clarified 
the conceptual background of the conjecture, 
relating groupoid homology, $K$-theory, and Chern characters 
in a systematic way. 
In particular, 
they established the rational form of the HK conjecture 
under general and mild assumptions. 

On the other hand, the HK conjecture is not valid 
for arbitrary ample groupoids. 
Scarparo gave counterexamples arising from odometers 
\cite{Sc20ETDS}, and 
Deeley later constructed a counterexample 
which is principal \cite{De23ETDS}. 
Further examples and related phenomena have been studied, 
for instance, in \cite{OS22MathScand,OS23GGD}. 
Thus the present state of the subject is rather subtle: 
the conjecture holds in many natural low-dimensional or 
highly structured situations, 
but fails in full generality. 
In particular, even for transformation groupoids of 
$\Z^N$-actions, 
the integral form of the conjecture is 
far from completely understood when $N\geq3$. 
To the best of our knowledge, 
no explicit verification of the integral HK conjecture 
for free Cantor $\Z^3$-systems had previously appeared.

\subsection{What is GL?\nopunct}

The gap-labelling conjecture originates in Bellissard's 
noncommutative-geometric approach to Schr\"{o}dinger operators 
on aperiodic media. 
The integrated density of states is constant on each spectral gap, 
and the problem is to describe the resulting gap labels 
in terms of the topology and dynamics of the underlying aperiodic structure. 
In the $C^*$-algebraic formulation, 
for a Cantor $\Z^N$-system $\Z^N\curvearrowright X$ and 
an invariant probability measure $\nu$, 
the conjecture takes the form 
\[
\tau_{\nu,*}\left(K_0(C(X)\rtimes\Z^N)\right)=\nu(C(X,\Z)), 
\]
where $\tau_\nu$ is the trace induced by $\nu$. 
This formulation is closely related to the gap-labelling theorems 
for aperiodic solids and tiling spaces 
developed by Bellissard and collaborators \cite{MR1798994}. 

Around the beginning of the 2000s, several proofs, 
or closely related formulations, appeared in 
\cite{KP03Michigan,MR2018220,BBG06CMP}. 
They were based on index theory and on comparisons 
between $K$-theory and cohomological data of suspensions or tiling spaces. 
The integral statement is, however, more delicate than 
these proofs might suggest. 
G\"{a}hler, Hunton and Kellendonk observed in \cite{GHK13AGT} 
that torsion in integral cohomology may invalidate 
torsion-freeness assumptions implicit in earlier arguments. 
From the present point of view, 
the essential issue is that these arguments require 
an integral Chern-character identification 
which is not justified for arbitrary Cantor $\Z^N$-systems. 

More recently, Liu, Rosenberg and Trevi{\~n}o constructed a
four-dimensional cubical substitution tiling space 
for which the relevant Chern character fails to give 
the required integral identification \cite{MR5056667}. 
The discrepancy in their example arises from torsion and 
is therefore annihilated by the Ruelle--Sullivan current, 
so it does not yield a counterexample to 
the original gap-labelling conjecture. 
Thus, the general integral problem should not be regarded 
as completely settled, 
although the cases $N=1$ and $N=2$ follow 
by elementary $K$-theoretic methods and 
the case $N=3$ was proved by Bellissard, Kellendonk and Legrand 
\cite{BKL01CRASParis}. 

In this paper, we formulate GL in the setting of ample groupoids. 
Let $\cG$ be an ample groupoid with compact unit space and 
let $\nu\in M(\cG)$. 
The measure $\nu$ induces a trace $\tau_\nu$ on $C_r^*(\cG)$, 
while integration descends to $H_0(\cG)$. 
The groupoid version of GL asks whether 
\[
\tau_{\nu,*}\left(K_0(C_r^*(\cG))\right)=\nu(H_0(\cG)). 
\]
This question was already raised, 
in a slightly more restricted setting, in \cite[Remark 3.8]{Ma16Adv}. 
For transformation groupoids $X\rtimes\Z^N$, 
this recovers the classical Cantor-dynamical formulation.

\subsection{Main results and methods}

The purpose of the present paper is 
to study HK and GL for transformation groupoids 
arising from free actions of poly-$\Z$ groups on Cantor sets. 
Our emphasis is not only on obtaining positive results 
in particular cases, 
but also on developing a concrete method 
that compares groupoid homology, groupoid cohomology, and 
the $K$-theory of the associated crossed product $C^*$-algebra. 

The starting point of our approach is the observation that, 
for semidirect products by $\Z$-actions, 
the homology long exact sequence and 
the Pimsner--Voiculescu exact sequence should be compared directly. 
We prove that the homology comparison maps
\[
\mu_0:H_0(\cG)\longrightarrow K_0(C^*_r(\cG)),\qquad 
\mu_1:H_1(\cG)\longrightarrow K_1(C^*_r(\cG))
\]
are compatible with these exact sequences. 
This gives a rather explicit way of 
tracing homology classes into $K$-theory. 
Although this argument is somewhat computational, 
it has the advantage that 
it keeps track of the actual comparison maps, 
rather than only identifying abstract groups. 

A new ingredient in this paper is 
the use of cohomology comparison maps. 
For a transformation groupoid $\cG=X\rtimes\Gamma$ and 
for a suitable class $\sfb\in K_i(C^*_r(\Gamma))$, 
we construct maps 
\[
\mu^0:H^0(\cG)\longrightarrow K_i(C^*_r(\cG)),\qquad 
\mu^1:H^1(\cG)\longrightarrow K_{i+1}(C^*_r(\cG)). 
\]
The map $\mu^0$ is obtained 
from invariant integer-valued functions and the class $\sfb$, 
while $\mu^1$ is defined using the circle actions on $C^*_r(\cG)$ 
associated with continuous homomorphisms $\cG\to\Z$. 
These maps are again compatible with 
the cohomology long exact sequence and 
the Pimsner--Voiculescu exact sequence. 
In this way, 
the missing $K$-theoretic summands 
which are not detected by $\mu_0$ and $\mu_1$ 
alone can be detected by cohomology. 
For actions of poly-$\Z$ groups, 
Poincar\'e duality then allows us to reinterpret 
these cohomological terms as the higher homology groups appearing in HK. 

Using this method, 
we obtain several low-dimensional results for free actions 
of poly-$\Z$ groups. 
For $\Z$-actions and $\Z^2$-actions, 
we recover HK and GL by elementary Pimsner--Voiculescu arguments. 
For actions of the Klein bottle group, 
we show that $\mu_1$ is an isomorphism and 
that $K_0(C^*_r(\cG))$ fits into a split exact sequence built 
from $H_0(\cG)$ and $H_2(\cG)$. 
In particular, HK and GL hold in this case. 
For poly-$\Z$ groups of Hirsch length three, 
we divide the discussion according to the action 
on the relevant Bott class. 
In the orientation-preserving case we prove HK, 
while in the other cases we obtain explicit exact sequences 
which show, in particular, that
the canonical maps $\mu_0$ and $\mu_1$ are injective. 
We also treat certain poly-$\Z$ groups of Hirsch length four 
and obtain explicit exact sequences describing the $K$-groups 
in terms of the homological and cohomological data. 
These results do not amount to a complete verification of HK 
in all cases, 
but they give a precise description of 
how the $K$-groups are assembled from the homological data. 
In particular, they single out the injectivity of 
the canonical homology comparison maps as a natural weaker form of HK. 

We next turn to GL. 
Our formulation of GL is purely groupoid-theoretic. 
If $\cG$ is an ample groupoid with compact unit space, 
we consider the dimension map 
\[
D_\cG:K_0(C^*_r(\cG))\longrightarrow\Aff(M(\cG))
\]
defined by pairing $K_0(C^*_r(\cG))$ with invariant probability measures. 
Then GL asks whether the image of $D_\cG$ is 
already generated by the image of $H_0(\cG)$ 
under the canonical map $\mu_0$. 
In this form, the gap-labelling problem becomes 
a question about the trace values of 
$K$-theory classes and their relation to groupoid homology. 

A key technical point is that, 
even when full GL is difficult to prove, 
the obstruction can often be controlled up to a factor of two. 
The reason for this factor is quite concrete. 
We use the structure of topological full groups, 
in particular the fact that elements in the kernel of the index map 
can be written as products of transpositions. 
Combined with the de la Harpe--Skandalis determinant and 
the trace formula for the Pimsner--Voiculescu boundary map, 
this shows that certain determinant contributions lie in 
\[
\frac{1}{2}\Ima(D_\cG\circ\mu_0). 
\]
This leads naturally to what we call GL up to a factor of two: 
\[
\Ima D_\cG\subset\frac{1}{2}\Ima(D_\cG\circ\mu_0). 
\]
Thus the factor $1/2$ is not merely an artifact of notation, 
but arises from the transposition decomposition 
in the topological full group. 

Applying this argument, 
we prove GL up to a factor of two 
for free actions of poly-$\Z$ groups of Hirsch length three. 
We also prove it for certain groups of Hirsch length four, 
including the case of $\Z^4$. 
In these cases, full GL follows 
whenever $\Ima(D_\cG\circ\mu_0)$ is $2$-divisible. 
Although our method does not settle 
integral GL in complete generality, 
it identifies the precise place where the remaining obstruction occurs. 

Finally, 
we revisit the three-dimensional gap-labelling theorem 
of \cite{BKL01CRASParis} 
from the viewpoint of mapping tori and cohomology comparison maps. 
For a free $\Z^N$-action, 
the mapping torus provides another realization of 
the relevant $K$-theory groups. 
We show that the cohomology comparison maps for mapping tori are 
again compatible with the exact sequences. 
When $N$ is odd, the antisymmetry in the Connes pairing forces 
certain cohomological contributions to have zero trace. 
This recovers GL for $\Z^3$-actions and, 
combined with the preceding argument, 
yields GL up to a factor of two for $\Z^5$-actions. 

In summary, the paper develops an exact-sequence method 
for studying HK and GL simultaneously. 
The method gives explicit information 
on the comparison between homology and $K$-theory 
for low-dimensional poly-$\Z$ actions, 
introduces cohomology comparison maps 
as a systematic complement to the canonical homology comparison maps, 
and proves several gap-labelling results, 
including gap-labelling up to a factor of two 
where the full integral statement is not presently accessible 
by known methods.

\section*{Acknowledgements}

The author is deeply grateful to Valerio Proietti and Alistair Miller 
for pointing out difficulties in the existing proofs of 
the gap-labelling theorem. 
Their comments, made at an early stage, 
provided the starting point for the present work. 

The author would also like to thank Johannes Kellendonk 
for many helpful comments and explanations. 
In particular, he explained the vanishing of the relevant Connes pairing 
in the three-dimensional gap-labelling theorem, 
drew the author's attention to the recent work of 
Liu, Rosenberg and Trevi{\~n}o, and 
offered valuable encouragement concerning this paper. 

Finally, the author thanks Ian Putnam 
for candid and helpful correspondence concerning 
the proof of the gap-labelling theorem in his joint work with Kaminker.

\section{Preliminaries}

\subsection{Ample groupoids}

We identify $\T$ with $\R/\Z$. 
The indicator function of a set $A$ is denoted by $1_A$. 
We say that a subset of a topological space $X$ is clopen 
if it is both closed and open. 
A topological space is said to be totally disconnected 
if its connected components are singletons. 
By a Cantor set, 
we mean a compact, metrizable, totally disconnected space 
with no isolated points. 
It is known that any two such spaces are homeomorphic. 

In this article, by an \'etale groupoid 
we mean a second countable locally compact Hausdorff groupoid 
such that the range map is a local homeomorphism. 
We refer the reader to \cite{Re_text,R08Irish} 
for background material on \'etale groupoids. 
For an \'etale groupoid $\cG$, 
we let $\cG^{(0)}$ denote the unit space and 
let $s$ and $r$ denote the source and range maps, 
i.e.,\ $s(g)=g^{-1}g$, $r(g)=gg^{-1}$. 
A subset $U\subset\cG$ is called a bisection 
if $r|U$ and $s|U$ are injective. 
The \'etale groupoid $\cG$ has a basis for its topology 
consisting of open bisections. 
For $x\in\cG^{(0)}$, 
the set $r(s^{-1}(x))$ is called the orbit 
(or $\cG$-orbit) of $x$. 
When every orbit is dense in $\cG^{(0)}$, $\cG$ is said to be minimal. 
An \'etale groupoid $\cG$ is called ample 
if it has a basis consisting of compact open bisections. 
Under our standing assumptions, 
this is equivalent to $\cG^{(0)}$ being totally disconnected. 
For $x\in\cG^{(0)}$, 
we write $\cG_x=r^{-1}(x)\cap s^{-1}(x)$ and 
call it the isotropy group of $x$. 
The isotropy subgroupoid of $\cG$ is 
$\Iso(\cG):=\{g\in\cG\mid r(g)=s(g)\}=\bigsqcup_{x\in\cG^{(0)}}\cG_x$. 
We say that $\cG$ is principal if $\Iso(\cG)=\cG^{(0)}$. 
We say that $\cG$ is effective 
if the interior of $\Iso(\cG)$ is $\cG^{(0)}$. 
This condition is also called essential principality 
in much of the literature. 
A (non-zero) Borel measure $\nu$ on $\cG^{(0)}$ is 
said to be $\cG$-invariant 
if for every compact open bisection $U\subset\cG$, 
one has $\nu(r(U))=\nu(s(U))<\infty$. 
When $\cG^{(0)}$ is compact, 
we let $M(\cG)$ denote the space of 
$\cG$-invariant probability measures on $\cG^{(0)}$. 

Let $\cG$ be an ample groupoid with compact unit space. 
We say that a compact open bisection $U\subset\cG$ is full 
if $r(U)=\cG^{(0)}=s(U)$ holds. 
Full bisections form a group $\sF(\cG)$ with multiplication given by 
\[
UV:=\{gh\mid g\in U,\ h\in V,\ s(g)=r(h)\}. 
\]
We call $\sF(\cG)$ the topological full group of $\cG$. 
Each full bisection $U$ induces 
a homeomorphism $r\circ(s|U)^{-1}$ of $\cG^{(0)}$. 
When $\cG$ is effective, the resulting homomorphism 
$\sF(\cG)\to\Homeo(\cG^{(0)})$ is injective (\cite{Ma12PLMS}). 

From a group action on a topological space, 
we can form a transformation groupoid. 
Let $\Gamma\curvearrowright X$ be an action of 
a countable discrete group $\Gamma$ 
on a second countable locally compact Hausdorff space $X$. 
The transformation groupoid $\cG:=X\rtimes\Gamma$ is 
$X\times\Gamma$ equipped with the product topology. 
The unit space of $\cG$ is given by $\cG^{(0)}=X\times\{1\}$ 
(where $1$ is the identity of $\Gamma$), 
with range and source maps 
$r(x,\gamma)=(x,1)$ and $s(x,\gamma)=(\gamma^{-1}.x,1)$. 
The unit space $\cG^{(0)}$ is often identified with $X$. 
Multiplication is given by
\[
(x,\gamma)\cdot(\gamma^{-1}.x,\gamma')=(x,\gamma\gamma'), 
\]
and the inverse of $(x,\gamma)$ is $(\gamma^{-1}.x,\gamma^{-1})$. 
Such a transformation groupoid is always \'etale. 
It is ample if and only if $X$ is totally disconnected. 
Moreover, $\cG$ is minimal if and only if the action is minimal 
that is, for all $x\in X$, 
the orbit $\{\gamma.x\mid\gamma\in\Gamma\}$ is dense in $X$. 
A Borel measure $\nu$ on $\cG^{(0)}$ is $\cG$-invariant 
if and only if $\nu$ is $\Gamma$-invariant 
under the identification of $\cG^{(0)}$ and $X$.

\subsection{Homology and cohomology for ample groupoids}

In this subsection, 
we collect several facts concerning the homology and cohomology 
of ample groupoids which will be used in the subsequent sections. 

Homology theory for \'etale groupoids was introduced 
by Crainic and Moerdijk \cite{CM00crelle}, 
and reframed by the author in \cite{Ma12PLMS} for ample groupoids. 
The reader may also refer to \cite{Ma16Adv,BDGW23PLMS,Ma25JLMS}. 
When $\cG$ is the transformation groupoid of 
an action $\Gamma\curvearrowright X$ of a discrete group $\Gamma$, 
the homology groups $H_n(\cG)$ are canonically isomorphic to 
the usual group homology $H_n(\Gamma,C_c(X,\Z))$. 
Here $C_c(X,\Z)$ is regarded as a $\Gamma$-module 
via the action induced from $\Gamma\curvearrowright X$. 
The zeroth homology $H_0(\cG)$ is the group of coinvariants: 
\[
H_0(\cG)=C_c(\cG^{(0)},\Z)
/\langle1_{r(U)}-1_{s(U)}
\mid\text{$U\subset\cG$ is a compact open bisection}\rangle. 
\]
There exists a natural homomorphism 
$\mu_0:H_0(\cG)\to K_0(C^*_r(\cG))$ such that 
the following diagram commutes: 
\[
\xymatrix@M=8pt{
C_c(\cG^{(0)},\Z) \ar[r]^-{\cong} \ar[d] & 
K_0(C_0(\cG^{(0)})) \ar[d] \\
H_0(\cG) \ar[r]^-{\mu_0} & 
K_0(C^*_r(\cG)). 
}
\]
When $U\subset\cG$ is a compact open bisection 
satisfying $r(U)=s(U)$, 	
the function $1_U$ is a $1$-chain and 
yields an element $[1_U]\in H_1(\cG)$. 
If $\cG^{(0)}$ is compact and $r(U)=\cG^{(0)}=s(U)$, 
the function $1_U$ can also be regarded 
as a unitary in the unital $C^*$-algebra $C^*_r(\cG)$ 
which defines the $K_1$-class $K_1(1_U)\in K_1(C^*_r(\cG))$. 
In \cite{BDGW23PLMS}, 
it was shown that there exists a natural homomorphism 
$\mu_1:H_1(\cG)\to K_1(C^*_r(\cG))$ 
such that $\mu_1([1_U])=K_1(1_U)$ holds. 
In other words, this means that the following diagram commutes: 
\[
\xymatrix@M=8pt{
\sF(\cG) \ar[r] \ar[d]_-{I} & 
U(C^*_r(\cG)) \ar[d] \\
H_1(\cG) \ar[r]^-{\mu_1} & 
K_1(C^*_r(\cG)), 
}
\]
where $\sF(\cG)$ is the topological full group, 
the top horizontal arrow sends $U$ to $1_U$ and 
$I:\sF(\cG)\to H_1(\cG)$ is the index map (\cite{Ma12PLMS}) 
defined by $U\mapsto [1_U]$. 
We call $\mu_0$ and $\mu_1$ the homology comparison maps. 

Let us recall the long exact sequence 
for the homology of semidirect products by $\Z$-actions. 

\begin{lemma}[{\cite[Lemma 1.3]{Or20JNG}}]
Let $\cG$ be an ample groupoid and 
let $\xi:\cG\to\Z$ be a homomorphism. 
Let $\hat\xi\in\Aut(\cG\times_\xi\Z)$ be an automorphism 
of the skew product $\cG\times_\xi\Z$ 
defined by $\hat\xi(g,i):=(g,i+1)$. 
Then, there exists a long exact sequence 
\[
\xymatrix@M=8pt{
\cdots \ar[r] & 
H_{n+1}(\cG) \ar[r] & 
H_n(\cG\times_\xi\Z) \ar[r]^{\id-H_n(\hat\xi)} & 
H_n(\cG\times_\xi\Z) \ar[r] & 
H_n(\cG) \ar[r] & \cdots . 
}
\]
\end{lemma}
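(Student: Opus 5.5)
The plan is to realize $\cG$, up to Morita (Kakutani) equivalence, as the crossed product of the skew product by the $\Z$-action generated by $\hat\xi$. Then the long exact sequence will come from the Lyndon--Hochschild--Serre spectral sequence for $\Z$, which has only two nonzero columns.

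\emph{Step 1: the Morita equivalence.} Put $\cH:=\cG\times_\xi\Z$. The automorphism $\hat\xi$ generates an action of $\Z$ on $\cH$, and we form the semidirect product $\cH\rtimes_{\hat\xi}\Z$. Define
\[
\Phi:\cH\rtimes\Z\to\cG,\qquad \Phi((g,i),k):=g .
\]
I will check that $\Phi$ is a continuous surjective homomorphism whose kernel is trivial in the appropriate sense. In fact $\cH\rtimes\Z$ is isomorphic to $\cG\times R_\Z$, where $R_\Z=\Z\times\Z$ is the full equivalence relation on $\Z$. Concretely, an arrow $((g,i),k)$ corresponds to $(g,(a,b))$ with $a,b\in\Z$ determined by $i$, $\xi(g)$ and $k$. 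Since the clopen subset $\cG^{(0)}\times\{(0,0)\}$ meets every orbit, $\cH\rtimes\Z$ is Kakutani equivalent to $\cG$. Homology of ample groupoids is invariant under Kakutani equivalence (Matui), so $H_*(\cH\rtimes\Z)\cong H_*(\cG)$.

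\emph{Step 2: the spectral sequence.} For the semidirect product of an ample groupoid $\cH$ by a $\Z$-action, there is a Hochschild--Serre type spectral sequence
\[
E^2_{p,q}=H_p(\Z,H_q(\cH))\Rightarrow H_{p+q}(\cH\rtimes\Z).
\]
It can be built from the double complex whose entries are the bar chains of $\cH$ tensored with the standard resolution $0\to\Z[\Z]\xrightarrow{t-1}\Z[\Z]\to\Z\to0$. Because this resolution has length one, $E^2_{p,q}=0$ for $p\neq0,1$. Moreover
\[
H_0(\Z,M)=\Coker(\id-\alpha),\qquad H_1(\Z,M)=\Ker(\id-\alpha),
\]
where $\alpha=H_q(\hat\xi)$ acts on $M=H_q(\cH)$.

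\emph{Step 3: splicing.} A two-column spectral sequence degenerates into short exact sequences
\[
0\to\Coker(\id-H_n(\hat\xi))\to H_n(\cG)\to\Ker(\id-H_{n-1}(\hat\xi))\to0 .
\]
Splicing these gives the stated long exact sequence. More directly, the double complex is the mapping cone of $\id-C_*(\hat\xi)$ on the chain complex $C_*(\cH)$, where $C_n(\cH)=C_c(\cH^{(n)},\Z)$. The long exact sequence of a mapping cone is then exactly the one displayed, with connecting map $H_{n+1}(\cG)\to H_n(\cH)$.

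The main obstacle is Step 2 at the chain level. One must show that the total complex of the mapping cone actually computes $H_*(\cH\rtimes\Z)\cong H_*(\cG)$, and not merely that a spectral sequence exists abstractly. I would try an explicit quasi-isomorphism first. Pushing chains forward along $\cH\to\cG$, $(g,i)\mapsto g$, kills the image of $\id-\hat\xi$, so the pushforward factors through $\Coker(\id-C_n(\hat\xi))$. The needed input is that $C_c(\cH^{(n)},\Z)$ is a free $\Z[\Z]$-module, since $\Z$ acts freely on each level $\cH^{(n)}$ with fundamental domain $\cG^{(n)}\times\{0\}$. This gives two facts. First, $\id-C_*(\hat\xi)$ is injective. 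Second, its cokernel is identified with $C_*(\cG)$ via the pushforward. So the short exact sequence of complexes
\[
0\to C_*(\cH)\xrightarrow{\id-\hat\xi}C_*(\cH)\to C_*(\cG)\to0
\]
yields the long exact sequence directly. This also makes Step 1 unnecessary.
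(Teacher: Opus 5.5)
The paper does not prove this lemma; it quotes it from Ortega. There it is deduced from the Hochschild--Serre type spectral sequence $E^2_{p,q}=H_p(\Z,H_q(\cG\times_\xi\Z))\Rightarrow H_{p+q}(\cG)$ for skew products. That spectral sequence has only the columns $p=0,1$, so it splices into the displayed sequence; this is essentially your Steps 1--3.

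Your final paragraph is a complete and more elementary proof by a genuinely different route. Record a composable string in $\cH=\cG\times_\xi\Z$ by one $\Z$-coordinate, say that of the first arrow; the others are then forced by $\xi$. This gives $\cH^{(n)}\cong\cG^{(n)}\times\Z$, so $C_n(\cH)\cong C_c(\cG^{(n)},\Z)\otimes\Z[\Z]$. Under this identification $\hat\xi_*$ is the shift, and the pushforward $\pi_*$ along $(g,i)\mapsto g$ is summation over the label. Hence $0\to C_n(\cH)\xrightarrow{\id-\hat\xi_*}C_n(\cH)\xrightarrow{\pi_*}C_n(\cG)\to0$ is exact in each degree. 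Only this induced-module structure is used, so freeness over $\Z[\Z]$ is not actually needed.

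The differential of $C_*(\cH)$ is not $\partial^{\cG}\otimes\id$ under this identification, since deleting the first arrow changes the label by $\pm\xi(g_1)$. This is harmless: you only need $\hat\xi_*$ and $\pi_*$ to be chain maps, and that follows because the face maps commute with $\hat\xi$ and with $\pi$.

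Compared with the spectral-sequence route, your argument avoids three things: Kakutani invariance, the identification $(\cG\times_\xi\Z)\rtimes_{\hat\xi}\Z\cong\cG\times R_\Z$, and the convergence and splicing of a spectral sequence. It also makes the maps explicit. The map $H_n(\cG\times_\xi\Z)\to H_n(\cG)$ is $\pi_*$, and the connecting map is the snake-lemma map `lift to the skew product and take the boundary'. That is exactly how $\delta([1_W])$ is computed in the proof of Proposition~\ref{HKcommute}.

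The spectral-sequence route, in turn, handles cocycles into an arbitrary discrete group, where there is no length-one resolution to reduce to a single short exact sequence of complexes. Given your final paragraph, you can drop Steps 1--3, including Step 2, which you left unjustified.
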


\begin{lemma}\label{hoLES}
Let $\cH$ be an ample groupoid 
and let $\theta:\cH\to\cH$ be an automorphism. 
Then, there exists a long exact sequence 
\[
\xymatrix@M=8pt{
\cdots \ar[r] & 
H_{n+1}(\cH\rtimes_\theta\Z) \ar[r] & 
H_n(\cH) \ar[r]^{\id-H_n(\theta)} & 
H_n(\cH) \ar[r] & 
H_n(\cH\rtimes_\theta\Z) \ar[r] & \cdots 
}
\]
where $\cH\rtimes_\theta\Z$ is the semidirect product. 
\end{lemma}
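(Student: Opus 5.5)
We plan to deduce Lemma \ref{hoLES} from the preceding lemma (Ortega's \cite[Lemma 1.3]{Or20JNG}) applied to $\cG:=\cH\rtimes_\theta\Z$ with $\xi:\cG\to\Z$ the canonical projection homomorphism $(h,n)\mapsto n$. The key point is that the skew product $\cG\times_\xi\Z$ is Kakutani equivalent (in fact Morita equivalent via a clopen full subset) to $\cH$, and that under this identification the automorphism $\hat\xi$ corresponds to $\theta$ (or its inverse, which does not affect exactness).

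Concretely, the steps are as follows. First, we describe $\cG\times_\xi\Z$: its unit space is $\cH^{(0)}\times\Z$, and an element $((h,n),i)$ goes from $(s(h'),i+n)$ to $(r(h),i)$ for a suitable $h'$ determined by $\theta^{-n}$. Second, we take the clopen subset $Y:=\cH^{(0)}\times\{0\}$ of the unit space. It is $\cG\times_\xi\Z$-full, since every unit $(x,i)$ is connected to $Y$ by the arrow $((\theta^{\pm i}\text{-translate},\,\mp i),\cdot)$. The reduction $(\cG\times_\xi\Z)|_Y$ consists of the elements with $n=0$, and it is therefore canonically isomorphic to $\cH$. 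Third, we use the standard fact from \cite{Ma12PLMS} (and \cite{CM00crelle}) that groupoid homology is invariant under Kakutani equivalence, and in particular under restriction to a full clopen subset. This gives isomorphisms $H_n(\cG\times_\xi\Z)\cong H_n(\cH)$.

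Fourth, we check how $\hat\xi$ transports. The automorphism $\hat\xi$ maps $Y$ onto $\cH^{(0)}\times\{1\}$, and not onto $Y$ itself. To identify $H_n(\hat\xi)$, we compose it with the isomorphism induced by the compact open bisection $\{((\theta\text{-type arrow},\pm1),\cdot)\}$ joining $\cH^{(0)}\times\{1\}$ back to $Y$. Conjugation by a bisection acts trivially on homology, since inner automorphisms induce the identity, or equivalently since the two restriction isomorphisms agree up to the induced map. The composite restricted to $Y\cong\cH^{(0)}$ is precisely $\theta$ or $\theta^{-1}$. Hence $\id-H_n(\hat\xi)$ corresponds to $\id-H_n(\theta^{\pm1})$. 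If we get $\theta^{-1}$, we note that $\id-H_n(\theta^{-1})=-H_n(\theta)^{-1}(\id-H_n(\theta))$, which has the same kernel and image. Thus we may replace the maps to obtain an exact sequence with $\id-H_n(\theta)$. Substituting all of this into Ortega's sequence yields the stated long exact sequence.

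The main obstacle is the fourth step: making precise that the transported automorphism equals $\theta$ on homology. This requires that the identification $H_n(\cG\times_\xi\Z)\cong H_n(\cH|_{\text{level }1})$ via conjugation by a full bisection is compatible with the restriction isomorphism. If this is troublesome, an alternative is to verify it on the chain level. Here $H_*(\cG\times_\xi\Z)$ is computed by $\Z[\Z]$-modules, and the sequence arises from the Shapiro-type short exact sequence $0\to C_c(\cH^{(0)}\times\Z)\xrightarrow{\id-\hat\xi}C_c(\cH^{(0)}\times\Z)\to C_c(\cH^{(0)})\to0$. One can instead write $\cH\rtimes_\theta\Z$ homology as the homology of the group $\Z$ with coefficients in the complex computing $H_*(\cH)$, which gives the Wang-type sequence directly via the Lyndon--Hochschild--Serre spectral sequence with a two-column $E^2$ page.
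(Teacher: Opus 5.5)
Your proposal is correct and follows essentially the same route as the paper. The paper applies Ortega's lemma to the projection $\xi(h,k)=k$, identifies the homology of the skew product with $H_*(\cH)$ through the full clopen set $\cH^{(0)}\times\{0\}$ (whose reduction is $\cH$), and transports $\hat\xi$ back to that set using the level-one bisection, which yields $\theta$. Your observation that $\id-H_n(\theta^{-1})$ and $\id-H_n(\theta)$ have the same kernel and image makes the $\theta$ versus $\theta^{-1}$ convention harmless, and the spectral-sequence fallback is not needed.
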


We use the convention that 
$\cH\rtimes_\theta\Z$ is $\cH\times\Z$ with 
\[
r(h,k):=(r(h),0),\qquad s(h,k):=(\theta^{-k}(s(h)),0), 
\]
and multiplication
\[
(h,k)\cdot(h',l):=(h\theta^k(h'),k+l), 
\]
whenever $\theta^{-k}(s(h))=r(h')$. 

\begin{proof}
Define a homomorphism $\xi:\cH\rtimes_\theta\Z\to\Z$ 
by $\xi(g,k):=k$. 
We apply the lemma above to $\cH\rtimes_\theta\Z$ and $\xi$. 
For any $x\in\cH^{(0)}$ and $k\in\Z$, 
$r(x,k,0)=(x,0,0)$ and $s(x,k,0)=(\theta^{-k}(x),0,k)$. 
Hence $\cH^{(0)}\times\{0\}\times\{0\}$ is full 
in $(\cH\rtimes_\theta\Z)\times_\xi\Z$. 
Since $\cH$ is isomorphic to 
the reduction of $(\cH\rtimes_\theta\Z)\times_\xi\Z$ 
to $\cH^{(0)}\times\{0\}\times\{0\}$, 
one has $H_*((\cH\rtimes_\theta\Z)\times_\xi\Z)\cong H_*(\cH)$. 
Under this identification, 
the reduction of $\hat\xi$ to $\cH^{(0)}\times\{0\}\times\{0\}$ 
is implemented by the compact open bisection 
$\cH^{(0)}\times\{1\}\times\{0\}$, 
and the resulting automorphism of the
reduction is precisely $\theta$. 
\end{proof}

Cohomology theory for topological groupoids 
goes back to 
\cite[Chapter III]{MR660658} or \cite[Definition 1.12]{Re_text}. 
The reader may also refer to \cite[Definition 2.4]{MM25Munster} 
or \cite[Definition 2.4]{Ma25JLMS} 
for the cohomology of ample groupoids with constant coefficients. 
As with homology, 
when $\cG$ is the transformation groupoid of 
an action $\Gamma\curvearrowright X$ of a discrete group $\Gamma$, 
the cohomology groups $H^n(\cG)$ are canonically isomorphic to 
the usual group cohomology $H^n(\Gamma,C(X,\Z))$. 
The zeroth cohomology $H^0(\cG)$ is the group of invariants: 
\[
H^0(\cG)=\{f\in C(\cG^{(0)},\Z)
\mid f(r(g))=f(s(g))\quad\forall g\in\cG\}. 
\]
We let $\Hom(\cG,\Z)$ denote the group of 
continuous homomorphisms from $\cG$ to $\Z$. 
The first cohomology $H^1(\cG)$ is 
$\Hom(\cG,\Z)$ modulo the subgroup consisting of 
homomorphisms of the form $g\mapsto f(r(g))-f(s(g))$ 
for some $f\in C(\cG^{(0)},\Z)$. 

Analogously to the homology case, 
there exists a long exact sequence 
for the cohomology of semidirect products by $\Z$-actions. 
We use the convention that 
$\theta$ acts on cohomology by pullback through $\theta^{-1}$. 

\begin{lemma}[{\cite[Corollary 2.7]{Ma25JLMS}}]\label{coLES}
Let $\cH$ be an ample groupoid 
and let $\theta:\cH\to\cH$ be an automorphism. 
Then, there exists a long exact sequence 
\[
\xymatrix@M=8pt{
\cdots \ar[r] & 
H^n(\cH\rtimes_\theta\Z) \ar[r] & 
H^n(\cH) \ar[r]^{\id-H^n(\theta)} & 
H^n(\cH) \ar[r] & 
H^{n+1}(\cH\rtimes_\theta\Z) \ar[r] & \cdots 
}
\]
where $\cH\rtimes_\theta\Z$ is the semidirect product. 
\end{lemma}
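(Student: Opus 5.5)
The plan is to mirror the proof of Lemma \ref{hoLES}, replacing homology by cohomology. I will first take as given the cohomological analogue of Ortega's lemma: for an ample groupoid $\cG$ with a continuous homomorphism $\xi:\cG\to\Z$, there is a long exact sequence
\[
\cdots\to H^n(\cG)\to H^n(\cG\times_\xi\Z)\xrightarrow{\id-H^n(\hat\xi)}H^n(\cG\times_\xi\Z)\to H^{n+1}(\cG)\to\cdots .
\]
If it has to be proved, I would build it from the short exact sequence of coefficient modules. The skew product $\cG\times_\xi\Z$ has unit space $\cG^{(0)}\times\Z$, and its cohomology equals the cohomology of $\cG$ with coefficients in the induced module $C(\cG^{(0)}\times\Z,\Z)$, with $\cG$ acting through $\xi$ by shifting the $\Z$-coordinate. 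This is Shapiro's lemma for the skew product.

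The short exact sequence of coefficient modules is
\[
0\to\Z\to C(\cG^{(0)}\times\Z,\Z)\xrightarrow{\id-\hat\xi^*}C(\cG^{(0)}\times\Z,\Z)\to0,
\]
read in each fiber. It is exact because a function on $\Z$ fixed by the shift is constant, and every function $b$ on $\Z$ has the form $a-a(\cdot+1)$: fix $a(0)$ and solve recursively in both directions. Surjectivity needs care: the coefficients $C(\cG^{(0)}\times\Z,\Z)$ carry no compact-support condition in the $\Z$-direction, and the recursive solution must be chosen continuously in the $\cG^{(0)}$-variable. Both are fine because locally constant functions can be solved fiberwise with $a(\cdot,0)=0$. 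The associated long exact sequence of cohomology groups is the one displayed above.

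With this in hand, I apply it to $\cG=\cH\rtimes_\theta\Z$ and $\xi(h,k)=k$, exactly as in Lemma \ref{hoLES}. The subset $\cH^{(0)}\times\{0\}\times\{0\}$ is a clopen full subset of the unit space of $(\cH\rtimes_\theta\Z)\times_\xi\Z$. The reduction of the skew product to it is isomorphic to $\cH$. Cohomology, like homology, is invariant under reduction to a full clopen subset via Kakutani equivalence, so
\[
H^*\bigl((\cH\rtimes_\theta\Z)\times_\xi\Z\bigr)\cong H^*(\cH).
\]
The automorphism $\hat\xi$ does not preserve this subset. However, the compact open bisection $\cH^{(0)}\times\{1\}\times\{0\}$ conjugates it back, and the induced automorphism of $\cH$ is $\theta$. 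Conjugation by a bisection acts trivially on cohomology, so $H^n(\hat\xi)$ corresponds to $H^n(\theta)$ or to its inverse, depending on conventions.

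The main obstacle is this last bookkeeping step. I must check that the direction of conjugation, together with the convention that $\theta$ acts by pullback through $\theta^{-1}$, yields exactly $\id-H^n(\theta)$ rather than $\id-H^n(\theta)^{-1}$. If the inverse appears, I would replace it using
\[
\id-H^n(\theta)^{-1}=-H^n(\theta)^{-1}\circ\bigl(\id-H^n(\theta)\bigr).
\]
Since $-H^n(\theta)^{-1}$ is an automorphism, the two maps have the same kernel and image. Composing one of the adjacent maps with this automorphism then gives a long exact sequence of exactly the stated form.
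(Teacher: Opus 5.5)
The paper does not prove this lemma; it imports it from \cite[Corollary 2.7]{Ma25JLMS}. Your argument is the cohomological transcription of the paper's proof of Lemma \ref{hoLES}, and it is correct.

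Where you go beyond that template is that you do not quote a skew-product sequence (Ortega's lemma in the homological case) but derive it. Under $(\cG\times_\xi\Z)^{(n)}\cong\cG^{(n)}\times\Z$, with $\hat\xi$ translating the $\Z$-coordinate, you get a short exact sequence of cochain complexes
\[
0\to C(\cG^{(n)},\Z)\to C(\cG^{(n)}\times\Z,\Z)\xrightarrow{\ \id-\hat\xi^*\ }C(\cG^{(n)}\times\Z,\Z)\to 0 .
\]
So the ``Shapiro'' identification is a tautology at the cochain level. Your recursive surjectivity argument simply has to be run on $\cG^{(n)}\times\Z$ for every $n$, not only on the unit space; it is the same argument.

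The Kakutani step also needs a line of care, since the unit space $\cH^{(0)}\times\Z$ is non-compact and cochains carry no support condition. The continuous family of arrows $(x,k,0)^{-1}$, $x\in\cH^{(0)}$, $k\in\Z$, runs from $\cH^{(0)}\times\{0\}\times\{0\}$ to every unit. It makes restriction and the induced retraction mutually inverse on cohomology directly.

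Compared with the citation, your route gives a self-contained proof with explicit maps, which the paper in fact relies on later.

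On the bookkeeping you flagged: if $\hat\xi^*$ means pullback along $\hat\xi$, then conjugation by the bisection $\cH^{(0)}\times\{1\}\times\{0\}$ identifies it with pullback along $\theta$. In the paper's convention that is $H^n(\theta)^{-1}$, so your correction is genuinely needed, and it works. Your corrected sequence is exactly the one obtained by using the equally exact coefficient sequence with $\id-(\hat\xi^{-1})^*$ from the start. In that sequence the connecting map $H^0(\cH)\to H^1(\cH\rtimes_\theta\Z)$ is, up to sign, $g\mapsto[\zeta]$ with $\zeta=0$ on $\cH$ and $\zeta(x,1)=g(x)$ for $x\in\cH^{(0)}$. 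This is precisely the description of $\delta$ used in the proof of Proposition \ref{coHKcommute}.
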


We need one more ingredient for later use. 
Let $\cG$ be an ample groupoid with $\cG^{(0)}$ compact. 
For $\xi\in\Hom(\cG,\Z)$, one can define 
an action $\alpha_\xi:\T\curvearrowright C^*_r(\cG)$ by 
\[
\alpha_{\xi,t}(a)(g):=e^{2\pi i\xi(g)t}a(g)
\quad \forall a\in C_c(\cG),\ g\in\cG. 
\]
This induces a homomorphism 
$\tilde\alpha_\xi:C^*_r(\cG)\to C(\T)\otimes C^*_r(\cG)$ 
given by $\tilde\alpha_\xi(a)(t):= \alpha_{\xi,t}(a)$. 
Let $j:C^*_r(\cG)\to C(\T)\otimes C^*_r(\cG)$ be 
the canonical embedding, i.e. $j(a):=1\otimes a$. 
Since the two homomorphisms agree 
after evaluation at the base point of $\T$, 
their difference is naturally regarded as an element of 
$KK(C^*_r(\cG),C_0(\R)\otimes C^*_r(\cG))$. 

\begin{lemma}
The assignment $\xi\mapsto KK(\tilde\alpha_\xi)-KK(j)$ gives rise 
to a well-defined homomorphism 
from $H^1(\cG)$ to $KK^1(C^*_r(\cG),C^*_r(\cG))$. 
\end{lemma}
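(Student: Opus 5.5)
We need to show two things: the assignment $\xi\mapsto KK(\tilde\alpha_\xi)-KK(j)$ is additive in $\xi$, and it vanishes on coboundaries $\xi_f(g)=f(r(g))-f(s(g))$ with $f\in C(\cG^{(0)},\Z)$. First we make the target precise. Let $\mathrm{ev}_0:C(\T)\otimes C^*_r(\cG)\to C^*_r(\cG)$ be evaluation at the base point, so that $\mathrm{ev}_0\circ\tilde\alpha_\xi=\id=\mathrm{ev}_0\circ j$. The split exact sequence
\[
0\to C_0(\R)\otimes C^*_r(\cG)\to C(\T)\otimes C^*_r(\cG)\to C^*_r(\cG)\to 0
\]
(split by $j$) shows that $KK(C^*_r(\cG),C(\T)\otimes C^*_r(\cG))$ decomposes as $KK(C^*_r(\cG),C_0(\R)\otimes C^*_r(\cG))\oplus KK(C^*_r(\cG),C^*_r(\cG))$. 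The difference $KK(\tilde\alpha_\xi)-KK(j)$ is killed by $\mathrm{ev}_0$, so it lies in the first summand, which is $KK^1(C^*_r(\cG),C^*_r(\cG))$. We write $\delta(\xi)$ for this element.

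\textbf{Additivity.} Since $\alpha_{\xi+\eta,t}=\alpha_{\xi,t}\circ\alpha_{\eta,t}$ and the two circle actions commute, we have
\[
\tilde\alpha_{\xi+\eta}=m^*\circ(\id_{C(\T)}\otimes\tilde\alpha_\xi)\circ\tilde\alpha_\eta,
\]
where $m^*:C(\T\times\T)\otimes C^*_r(\cG)\to C(\T)\otimes C^*_r(\cG)$ is restriction to the diagonal.
\begin{itemize}
\item On the classes killed by $\mathrm{ev}_0$, the map $m^*$ acts additively: restriction to the diagonal of $C_0(\R^2)$-type classes is addition in $K^{-1}$, because the group structure on $\T$ induces the sum.
\item Concretely, in $KK^1$ the element $\delta(\xi)$ is represented by the path of homomorphisms $[0,1]\ni t\mapsto\alpha_{\xi,t}$, which is a loop based at $\id$.
\item The loop for $\xi+\eta$ is the pointwise product of the loops for $\xi$ and $\eta$, and this pointwise product is homotopic to their concatenation. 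The Eckmann--Hilton argument gives this homotopy.
\item Concatenation of loops based at $\id$ corresponds to addition in $KK(C^*_r(\cG),C_0(\R)\otimes C^*_r(\cG))$.
\end{itemize}
Hence $\delta(\xi+\eta)=\delta(\xi)+\delta(\eta)$, and it follows that $\delta(0)=0$.

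\textbf{Coboundaries.} Take $\xi=\xi_f$. Let $u_t:=e^{2\pi i ft}\in C(\cG^{(0)})\subset C^*_r(\cG)$; this is a unitary because $f$ is integer-valued and continuous on the compact space $\cG^{(0)}$. For $a\in C_c(\cG)$ we have $(u_tau_t^*)(g)=e^{2\pi i t(f(r(g))-f(s(g)))}a(g)$, so $\alpha_{\xi_f,t}=\mathrm{Ad}\,u_t$. Thus $\tilde\alpha_{\xi_f}=\mathrm{Ad}(U)\circ j$, where $U\in C(\T)\otimes C^*_r(\cG)$ is the unitary $U(t)=u_t$; here $u_t$ is $1$-periodic because $f$ takes integer values.

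One cannot simply say that inner automorphisms act trivially on $KK$. Since $U(0)=1$, conjugation by $U$ fixes the relative class only after a homotopy, and $U$ itself may be nontrivial in $K_1(C(\T)\otimes C^*_r(\cG))$. The plan is to avoid this by using a homotopy at the level of $f$. Put $f_s:=sf$ for $s\in[0,1]$ and $u^{(s)}_t:=e^{2\pi i s f t}$. The maps $\mathrm{Ad}(u^{(s)}_t)$, $t\in[0,1]$, form paths of homomorphisms starting at $\id$ and ending at $\mathrm{Ad}(e^{2\pi i s f})$. These are loops only when $s=0$ or $s=1$. Instead, we view the family as a homotopy of paths whose endpoints are the inner automorphisms $\mathrm{Ad}(e^{2\pi i s f})$, and these lie in a contractible family through $\mathrm{Ad}\,e^{2\pi i s' f}$, $s'\le s$. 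Using the standard fact that a unitary in $1+C_0(\R)\otimes A$ conjugating $j$ gives a class homotopic to $j$ (Kasparov's rotation trick), we get $KK(\mathrm{Ad}(U)\circ j)=KK(j)$ in the relative group, so $\delta(\xi_f)=0$.

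A cleaner alternative is to write $U=\exp(2\pi i\,t f)$. Here $tf$ is not periodic in $t$, but the homotopy $U_s(t)=e^{2\pi i s t f}$ stays unitary and conjugation by it preserves the ideal $C_0(\R)\otimes C^*_r(\cG)$ as $s$ varies.

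The main obstacle is making this coboundary step rigorous. We must check that $\mathrm{Ad}(U)\circ j$ and $j$ define the same element of $KK(C^*_r(\cG),C_0(\R)\otimes C^*_r(\cG))$ in the split decomposition. I would do this with a two-parameter homotopy, or by identifying $\delta(\xi)$ with a Kasparov module $(C^*_r(\cG)\otimes C_0(\R),\,\cdot\,)$ in which conjugation by $u_t$ is a unitary equivalence of modules. Additivity is routine once the loop picture of $KK^1$ is in place.
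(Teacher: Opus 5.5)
Your additivity argument is essentially the paper's: the loop $t\mapsto\alpha_{\xi+\eta,t}$ is homotopic through loops of automorphisms to the concatenation, and concatenation gives the sum. Your identification $\tilde\alpha_{\xi_f}=\mathrm{Ad}(U)\circ j$ with $U(t)=e^{2\pi i tf}$ is also exactly the unitary $w$ the paper uses. The problem is the coboundary step. Your claim that ``one cannot simply say that inner automorphisms act trivially on $KK$'' is false, and it leads you to set aside the one-line argument. Let $E$ be a unital $C^*$-algebra, $U\in E$ (or $U\in M(E)$) any unitary, and $\phi:A\to E$ a homomorphism. Left multiplication by $U$ is a unitary of the Hilbert module $E_E$ that intertwines $\phi$ and $\mathrm{Ad}(U)\circ\phi$. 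So the Kasparov modules $(E,\phi,0)$ and $(E,\mathrm{Ad}(U)\circ\phi,0)$ are unitarily equivalent, and $KK(\mathrm{Ad}(U)\circ\phi)=KK(\phi)$. The $K_1$-class of $U$ plays no role; this is the same reason that $K_*(\mathrm{Ad}\,u)=\id$ for every unitary $u$, whether or not $u$ is connected to $1$. Take $E=C(\T)\otimes C^*_r(\cG)$ and $\phi=j$. Then $KK(\tilde\alpha_{\xi_f})-KK(j)=0$ in $KK(C^*_r(\cG),C(\T)\otimes C^*_r(\cG))$. Your split exact sequence shows that $KK(C^*_r(\cG),C_0(\R)\otimes C^*_r(\cG))$ injects into this group, so $\delta(\xi_f)=0$. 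That is the paper's whole proof of this step.

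The workarounds you propose instead do not work. For $0<s<1$, the function $t\mapsto e^{2\pi i stf}$ is not $1$-periodic unless $sf$ is integer-valued. Consequently $t\mapsto\mathrm{Ad}(U_s(t))\circ j$ is a path from $j$ to $\mathrm{Ad}(e^{2\pi i sf})\circ j$, which in general is not a loop. It does not define a homomorphism into $C(\T)\otimes C^*_r(\cG)$. Also, $\mathrm{Ad}(U_s)(a)-a$ does not vanish at $t=1$, so these maps give neither a homotopy of homomorphisms into $C(\T)\otimes C^*_r(\cG)$ nor a quasi-homomorphism into the ideal $C_0(\R)\otimes C^*_r(\cG)$. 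Your ``cleaner alternative'' fails for the same reason. As written, the coboundary step therefore rests on a ``standard fact'' attributed to a rotation trick, which you flag yourself as not yet rigorous. The missing ingredient is not a two-parameter homotopy but the unitary equivalence of Kasparov modules described above.
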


\begin{proof}
Suppose that $\xi$ is a coboundary, 
i.e., there exists $f\in C(\cG^{(0)},\Z)$ 
such that $\xi(g)=f(r(g))-f(s(g))$ for all $g\in\cG$. 
Define a unitary $w\in C(\T)\otimes C(\cG^{(0)})$ 
by $w(t):=e^{2\pi itf}$. 
For $t\in\T$, $a\in C_c(\cG)$ and $g\in\cG$, it holds that 
\[
\alpha_{\xi,t}(a)(g)=e^{2\pi i\xi(g)t}a(g)
=e^{2\pi if(r(g))t}a(g)e^{-2\pi if(s(g))t}
=(w(t)aw(t)^*)(g). 
\]
Hence, $\tilde\alpha_\xi$ is unitarily equivalent to $j$, 
and so $KK(\tilde\alpha_\xi)-KK(j)=0$. 

Suppose that we are given $\xi,\eta\in\Hom(\cG,\Z)$. 
The loop $t\mapsto \alpha_{\xi+\eta,t}$ is homotopic, 
through loops of automorphisms, 
to the concatenation of the loops 
$t\mapsto \alpha_{\xi,t}$ and $t\mapsto \alpha_{\eta,t}$, 
that is, 
\[
\beta(a)(t):=\begin{cases}
\tilde\alpha_\xi(a)(2t) & 0\leq t\leq1/2 \\
\tilde\alpha_\eta(a)(2t{-}1) & 1/2\leq t\leq1. 
\end{cases}
\]
Therefore 
\begin{align*}
KK(\tilde\alpha_{\xi+\eta})-KK(j)
&=KK(\beta)-KK(j) \\
&=\left(KK(\tilde\alpha_\xi)-KK(j)\right)
+\left(KK(\tilde\alpha_\eta)-KK(j)\right), 
\end{align*}
thereby completing the proof. 
\end{proof}

\begin{definition}\label{kg}
We write the homomorphism constructed above as 
\[
k_\cG:H^1(\cG)\to KK^1(C^*_r(\cG),C^*_r(\cG)), 
\]
and call it the $\T$-action map. 
\end{definition}

This construction will be used to pair 
first cohomology classes with $K$-theory classes.

\subsection{HK}

\begin{definition}\label{HK}
Let $\cG$ be an ample groupoid. 
We say that HK holds for $\cG$ if there exists an isomorphism 
\[
\bigoplus_{j=0}^\infty H_{i+2j}(\cG)\cong K_i(C^*_r(\cG))
\]
for each $i=0,1$. 
\end{definition}

Although HK is known to hold for many important classes of 
ample groupoids, 
it is false in full generality. 
In contrast, the low-degree comparison maps 
\[
\mu_0:H_0(\cG)\to K_0(C^*_r(\cG)),\qquad 
\mu_1:H_1(\cG)\to K_1(C^*_r(\cG))
\]
are canonical, 
and it is natural to ask whether they are injective 
under appropriate hypotheses. 
Some of our results below will be formulated 
in this weaker direction.

\subsection{GL}

Assume that $\cG^{(0)}$ is compact. 
For any $\nu\in M(\cG)$, 
one can associate a tracial state $\tau_\nu$ 
on the $C^*$-algebra $C^*_r(\cG)$ by 
\[
\tau_\nu(f):=\int_{\cG^{(0)}}f(x)\ d\nu(x)
\quad\forall f\in C_c(\cG). 
\]
In general, a tracial state $\tau$ on a $C^*$-algebra $A$ induces 
a homomorphism $\tau_*:\:K_0(A)\to\R$ 
satisfying $\tau_*([p])=\tau(p)$ for every projection $p\in A$. 
By the definition of $\mu_0$, one has 
\[
\tau_{\nu,*}(\mu_0([f]))=\int_{\cG^{(0)}}f(x)\ d\nu(x)
\]
for all $f\in C(\cG^{(0)},\Z)$ and $\nu\in M(\cG)$. 
We let $\Aff(M(\cG))$ denote the space of 
$\R$-valued affine continuous functions on $M(\cG)$. 
Define a homomorphism $D_\cG:K_0(C^*_r(\cG))\to\Aff(M(\cG))$ 
by 
\[
D_\cG(c)(\nu):=\tau_{\nu,*}(c)
\]
for $c\in K_0(C^*_r(\cG))$ and $\nu\in M(\cG)$. 
We call $D_\cG$ the dimension map for $\cG$. 
We emphasize that $D_{\cG}$ is defined 
using only the canonical traces 
arising from invariant probability measures; 
we do not assume that 
every tracial state on $C_r^*(\cG)$ is of this form. 

The gap-labelling in the setting of ample groupoids is formulated 
as follows. 

\begin{definition}\label{GL}
Let $\cG$ be an ample groupoid with $\cG^{(0)}$ compact. 
We say that GL holds for $\cG$ 
if $D_\cG(\mu_0(H_0(\cG)))$ equals $D_\cG(K_0(C^*_r(\cG)))$. 
\end{definition}

We note that, in \cite[Remark 3.8]{Ma16Adv}, 
it was asked whether GL holds 
for any principal, minimal, almost finite groupoid.

\section{Homology comparison maps}

This section establishes the compatibility of 
the homology comparison maps $\mu_0$ and $\mu_1$ (see Section 2.2) 
with the homology long exact sequence and 
the Pimsner--Voiculescu exact sequence in $K$-theory. 

Let $\cH$ be an ample groupoid with compact unit space 
and let $\theta:\cH\to\cH$ be an automorphism. 
Put $\cG:=\cH\rtimes_\theta\Z$. 
Let $\bar\theta\in\Aut(C^*_r(\cH))$ 
denote the automorphism of $C^*_r(\cH)$ induced by $\theta$. 
Thus, $\bar\theta(f):=f\circ\theta^{-1}$ for $f\in C_c(\cH)$. 
The crossed product $C^*$-algebra $C^*_r(\cH)\rtimes_{\bar\theta}\Z$ 
is canonically isomorphic to 
the groupoid $C^*$-algebra $C^*_r(\cH\rtimes_\theta\Z)$, 
and we shall identify these two algebras. 

Consider the following diagram: 
\begin{equation*}
\vcenter{
\xymatrix@M=8pt{
\ar[r] & 
H_1(\cH) \ar[r]^{\id-H_1(\theta)} \ar[d]_{\mu_1} 
\ar@{}[dr]|-{(1)} & 
H_1(\cH) \ar[r] \ar[d]_{\mu_1} \ar@{}[dr]|-{(2)} & 
H_1(\cG) \ar[r]^{\delta} \ar[d]_{\mu_1} \ar@{}[dr]|-{(3)} & 
\phantom{H_0(\cH)} & \\
\ar[r] & 
K_1(C^*_r(\cH)) \ar[r]^{\id-K_1(\bar\theta)} & 
K_1(C^*_r(\cH)) \ar[r] & 
K_1(C^*_r(\cG)) \ar[r]^{\partial} & \phantom{K_0(\cH)} & 
}}
\end{equation*}
\begin{equation}\label{HKdiagram}
\vcenter{
\xymatrix@M=8pt{
\phantom{H} \ar[r]^-{\delta} \ar@{}[dr]|-{(3)} & 
H_0(\cH) \ar[r]^{\id-H_0(\theta)} \ar[d]_{\mu_0} 
\ar@{}[dr]|-{(4)} &
H_0(\cH) \ar[r] \ar[d]_{\mu_0} \ar@{}[dr]|-{(5)} & 
H_0(\cG) \ar[r] \ar[d]_{\mu_0} & 0 \\
\phantom{H} \ar[r]^-{\partial} & 
K_0(C^*_r(\cH)) \ar[r]^{\id-K_0(\bar\theta)} &
K_0(C^*_r(\cH)) \ar[r] & K_0(C^*_r(\cG)) \ar[r], & 
}}
\end{equation}
where the top horizontal sequence is 
a part of the long exact sequence in Lemma \ref{hoLES} 
and the bottom horizontal sequence is 
a part of the Pimsner--Voiculescu exact sequence in $K$-theory. 

\begin{proposition}\label{HKcommute}
Assume further that 
the ample groupoid $\cG$ is either almost finite or purely infinite. 
Then the diagram \eqref{HKdiagram} is commutative. 
\end{proposition}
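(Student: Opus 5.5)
The plan is to check the five squares separately. Squares (1), (2), (4) and (5) should follow from naturality of the comparison maps. Square (3), which involves the two connecting maps $\delta$ and $\partial$, needs an explicit computation, and I expect it to be the main obstacle.

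\emph{Squares (1), (2), (4), (5).} The maps $\mu_0$ and $\mu_1$ are natural with respect to homomorphisms of ample groupoids that induce homomorphisms of reduced $C^*$-algebras. This is clear for $\mu_0$ from the defining diagram; for $\mu_1$ it comes from the construction in \cite{BDGW23PLMS}. Applying naturality to the automorphism $\theta$ of $\cH$, whose induced automorphism of $C^*_r(\cH)$ is $\bar\theta$, gives squares (1) and (4). For squares (2) and (5), I would check that the map $H_n(\cH)\to H_n(\cG)$ in Lemma \ref{hoLES} is induced by the open inclusion $\cH=\cH\times\{0\}\subset\cG$. This means tracing the identification with the skew product $(\cH\rtimes_\theta\Z)\times_\xi\Z$ through the reduction to $\cH^{(0)}\times\{0\}\times\{0\}$ made in the proof of Lemma \ref{hoLES}. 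The inclusion induces the canonical embedding $C^*_r(\cH)\to C^*_r(\cH)\rtimes_{\bar\theta}\Z$, which is exactly the map in the Pimsner--Voiculescu sequence.

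\emph{Square (3).} Here I use the hypothesis. When $\cG$ is almost finite or purely infinite, the index map $I:\sF(\cG)\to H_1(\cG)$ is surjective \cite{Ma12PLMS}. Together with $\mu_1\circ I=K_1(1_\cdot)$, this reduces square (3) to proving
\[
\mu_0(\delta([1_U]))=\partial(K_1(1_U))\qquad\text{for every }U\in\sF(\cG).
\]
Write $U=\bigsqcup_{k}U_k\times\{k\}$, a finite union with each $U_k\subset\cH$ a compact open bisection. The sets $r(U_k)$ partition $\cH^{(0)}$, and so do the sets $\theta^{-k}(s(U_k))$.

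\emph{Computing $\delta$ and $\partial$.} On the homology side, I would compute $\delta([1_U])$ at the chain level via the snake lemma applied to the short exact sequence of complexes underlying Lemma \ref{hoLES}, written in the skew product picture. Lift $1_U$ to the chain $\sum_k 1_{U_k\times\{k\}\times\{0\}}$ and take its boundary. The result should be an element of $C(\cH^{(0)},\Z)$ of the form
\[
\sum_{k>0}\sum_{j=0}^{k-1}1_{\theta^{j}(\text{piece of }r(U_k))}\;-\;\sum_{k<0}\cdots,
\]
that is, a telescoping count of how far each point is translated in the $\Z$-direction. On the $K$-theory side, $\partial$ is the index map of the Toeplitz extension $0\to C^*_r(\cH)\otimes\K\to\mathcal T\to C^*_r(\cH)\rtimes\Z\to0$. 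Lift $1_U=\sum_k 1_{U_k}u^k$ to $T=\sum_k 1_{U_k}S^k$, where $S^{-k}:=S^{*k}$ and $S$ is the Toeplitz isometry. One checks that $T$ is a partial isometry, so
\[
\partial(K_1(1_U))=[1-T^*T]-[1-TT^*].
\]
Both $1-T^*T$ and $1-TT^*$ are diagonal projections in $C(\cH^{(0)})\otimes\K$, namely finite sums of $1_{\theta^{j}(\cdots)}\otimes e_{jj}$. Their classes are therefore images under $\mu_0$ of explicit functions in $C(\cH^{(0)},\Z)$, and I would compare these with the chain computed above.

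\emph{Expected difficulty.} The hard part is matching the two formulas: getting the same clopen sets and the same signs. This depends on the sign conventions in Lemma \ref{hoLES} (and in \cite[Lemma 1.3]{Or20JNG}) and in the Pimsner--Voiculescu sequence. If the signs disagree, I would fix the convention for $\delta$ so that square (3) commutes. The case $U=\cH^{(0)}\times\{1\}$, where $\delta$ should give $\pm[1_{\cH^{(0)}}]$ and $\partial(K_1(u))=\pm[1]$, serves as a normalization check.
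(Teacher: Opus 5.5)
Your proposal is correct and is essentially the paper's proof. The paper uses naturality for squares (1), (2), (4), (5) and surjectivity of the index map (\cite[Theorem 7.5]{Ma12PLMS} in the almost finite case, \cite[Theorem 5.2]{Ma15crelle} in the purely infinite case) to reduce square (3) to full bisections $U=\bigsqcup_k U_k\times\{k\}$; it then computes $\delta([1_U])$ in the skew product and $\partial$ via the Toeplitz lift $\sum_{k\geq0}1_{U_k}u^k\otimes v^k+\sum_{k<0}1_{U_k}u^k\otimes(v^*)^{-k}$, exactly as you plan. The one step you leave implicit in the final matching is that for $k>0$ the homology side produces classes of $1_{s(U_k)}$ while the $K$-theory side produces $1_{r(U_k)}$, and these agree because $[1_{r(U_k)}]=[1_{s(U_k)}]$ in $H_0(\cH)$ (also after applying $H_0(\theta^m)$); the remaining overall sign depends only on the PV convention, and your normalization check with $U=\cH^{(0)}\times\{1\}$ is the right way to pin it down.
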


\begin{proof}
The commutativity of the squares (1) and (4) follows 
from the naturality of $\mu_i$. 
The squares (2) and (5) commute by the definitions of the maps 
$H_i(\cH)\to H_i(\cG)$ and $K_i(C^*_r(\cH))\to K_i(C^*_r(\cG))$. 

It remains to check the square (3). 
By \cite[Theorem 7.5]{Ma12PLMS} and \cite[Theorem 5.2]{Ma15crelle}, 
any element in $H_1(\cG)$ is represented 
by the indicator function of a full bisection $W\subset\cG$. 
Write it as 
\[
W=\bigsqcup_{k\in\Z}W_k\times\{k\}\subset\cH\times\Z=\cG. 
\]
Here each $W_k$ is a compact open bisection of $\cH$ and 
all but finitely many $W_k$ are empty. 
Moreover, 
\[
r(W_k\times\{k\})=r(W_k)\times\{0\},\quad 
s(W_k\times\{k\})=\theta^{-k}(s(W_k))\times\{0\}, 
\]
\[
\bigsqcup_kr(W_k)=\cH^{(0)}=\bigsqcup_k\theta^{-k}(s(W_k)). 
\]
By Lemma \ref{hoLES} and its proof, we have 
\begin{align*}
\delta([1_W])
&=\sum_{k>0}\left(\id+H_0(\theta^{-1})+\dots+H_0(\theta^{1-k})\right)
([1_{s(W_k)}])\\
&\quad -\sum_{k<0}\left(H_0(\theta)+H_0(\theta^2)
+\dots+H_0(\theta^{-k})\right)([1_{s(W_k)}]). 
\end{align*}
This formula is obtained by lifting $1_W$ to 
the skew product used in the proof of Lemma 2.2 and 
computing its boundary there. 
Regard $1_W$ as a unitary in $C^*_r(\cG)$; then 
$\mu_1([1_W])=K_1(1_W)$. 
To compute $\partial(K_1(1_W))$, 
we recall the construction of 
the Pimsner--Voiculescu exact sequence. 
We denote by $u$ the implementing unitary in 
$C^*_r(\cH)\rtimes_{\bar\theta}\Z=C^*_r(\cH\rtimes_\theta\Z)$. 
We use the convention that the implementing unitary $u$ 
satisfies $uau^*=\bar\theta(a)$ for all $a\in C^*_r(\cH)$. 
Let $v$ be the unilateral shift on $\ell^2(\N)$ 
and let 
\[
A:=C^*(C^*_r(\cH)\otimes1,u\otimes v)
\subset(C^*_r(\cH)\rtimes_{\bar\theta}\Z)\otimes C^*(v). 
\]
This yields the following short exact sequence of $C^*$-algebras: 
\[
\xymatrix@M=8pt{
0 \ar[r] & C^*_r(\cH)\otimes\K \ar[r] & 
A \ar[r]^-{\pi} & C^*_r(\cH)\rtimes_{\bar\theta}\Z \ar[r] & 0, 
}
\]
where $\K$ denotes the algebra of compact operators. 
The associated $6$-term exact sequence in $K$-theory gives 
the Pimsner--Voiculescu exact sequence. 
Under the identification 
$C^*_r(\cG)=C^*_r(\cH)\rtimes_{\bar\theta}\Z$, 
the unitary $1_W$ is equal to 
\[
w:=\sum_{k\in\Z}1_{W_k}u^k. 
\]
Set 
\[
x:=\sum_{k\geq0}(1_{W_k}u^k)\otimes v^k
+\sum_{k<0}(1_{W_k}u^k)\otimes(v^*)^{-k}\in A
\]
so that $\pi(x)=w$. 
Put $q_k:=v^{k-1}(1-vv^*)(v^*)^{k-1}$ for $k\in\N$. 
A direct computation gives 
\begin{align*}
xx^*&=\sum_{k\geq0}1_{r(W_k)}\otimes v^kv^{*k}
+\sum_{k<0}1_{r(W_k)}\otimes1\\
&=1-\sum_{k>0}1_{r(W_k)}\otimes(q_1+q_2+\dots+q_k)
\end{align*}
and 
\begin{align*}
x^*x&=\sum_{k\geq0}\bar\theta^{-k}(1_{s(W_k)})\otimes1
+\sum_{k<0}\bar\theta^{-k}(1_{s(W_k)})\otimes v^{-k}(v^*)^{-k}\\
&=1-\sum_{k<0}\bar\theta^{-k}(1_{s(W_k)})\otimes(q_1+q_2+\dots+q_{-k}). 
\end{align*}
For any projection $e\in C^*_r(\cH)$, 
$e\otimes q_i$ is Murray--von Neumann equivalent to 
$\bar\theta^{1-i}(e)\otimes q_1$, 
and hence 
\begin{align*}
\partial(K_1(1_W))
&=\sum_{k>0}\left(\id+K_0(\bar\theta^{-1})+\dots
+K_0(\bar\theta^{1-k})\right)(K_0(1_{r(W_k)}))\\
&\quad -\sum_{k<0}\left(K_0(\bar\theta)+K_0(\bar\theta^2)
+\dots+K_0(\bar\theta^{-k})\right)(K_0(1_{s(W_k)})). 
\end{align*}
Since $W_k$ is a compact open bisection in $\cH$, 
$[1_{r(W_k)}]=[1_{s(W_k)}]$ in $H_0(\cH)$, 
and this equality remains true after applying $H_0(\theta^m)$. 
Thus the preceding formula agrees with $\mu_0(\delta([1_W]))$. 
\end{proof}

The rest of the section is devoted to a technical proposition 
needed later for GL. 
We begin by recalling the de la Harpe--Skandalis determinant 
from \cite{HS84AnnInst}. 
For a unital $C^*$-algebra $A$, 
we denote by $T(A)$ the space of tracial states on $A$. 
The de la Harpe--Skandalis determinant is a homomorphism 
\[
\Delta:U(A)_0\to\Aff(T(A))/D_A(K_0(A)), 
\]
where $U(A)_0$ is the connected component of the identity 
in the unitary group $U(A)$ of $A$ and 
$D_A:K_0(A)\to\Aff(T(A))$ is the homomorphism defined by 
$D_A(c)(\tau)=\tau_*(c)$. 
For $u\in U(A)_0$, the value of $\Delta(u)$ is defined as follows. 
Let $\gamma:[0,1]\to U(A)_0$ be a piecewise smooth path 
such that $\gamma(0)=1$ and $\gamma(1)=u$. 
Then the function $d_\gamma:T(A)\to\R$ defined by 
\[
d_\gamma(\tau)
:=\frac{1}{2\pi i}\int_0^1\tau(\gamma'(t)\gamma(t)^*)\ dt
\]
belongs to $\Aff(T(A))$, 
and $\Delta(u)$ is the class of $d_\gamma$ 
in $\Aff(T(A))/D_A(K_0(A))$. 
It is known that $\Delta$ is a well-defined homomorphism. 

If $\cH$ is a principal ample groupoid with compact unit space 
and $A=C^*_r(\cH)$, then, 
under the identification of $M(\cH)$ with $T(A)$, 
$D_\cH$ agrees with $D_A$. 

\begin{lemma}
Suppose that $\cH$ is an ample groupoid with compact unit space 
which is principal and almost finite. 
Set $A:=C^*_r(\cH)$. 
Let $U\subset\cH$ be a full bisection 
satisfying $[1_U]=0$ in $H_1(\cH)$. 
Then $1_U$, regarded as a unitary in $U(A)$, 
belongs to $U(A)_0$, and 
there exists a piecewise smooth path $\gamma:[0,1]\to U(A)_0$ 
such that $\gamma(0)=1$, $\gamma(1)=1_U$ and 
\[
d_\gamma\in\frac{1}{2}\Ima(D_A\circ\mu_0). 
\]
In particular, $2\Delta(1_U)=0$. 
\end{lemma}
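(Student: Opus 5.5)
Since $\cH$ is principal and almost finite, the results of \cite{Ma12PLMS} (Theorem 7.5 there and the description of the kernel of the index map for almost finite groupoids) show that the kernel of $I:\sF(\cH)\to H_1(\cH)$ is generated by transpositions. Here a transposition is a full bisection of the form
\[
\tau_V:=V\sqcup V^{-1}\sqcup(\cH^{(0)}\setminus(r(V)\cup s(V))),
\]
where $V$ is a compact open bisection with $r(V)\cap s(V)=\emptyset$. I would therefore first write $U=\tau_{V_1}\tau_{V_2}\cdots\tau_{V_m}$. This step is where the structural input on topological full groups enters, and I expect it to be the main obstacle: one must check that the cited result applies in exactly this form, namely that $[1_U]=0$ in $H_1(\cH)$ already implies that $U$ lies in the subgroup generated by transpositions, rather than only in the commutator subgroup or in some larger group. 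Everything after this is an explicit computation.

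Because $d_\gamma$ is additive under pointwise products of paths (the usual property $d_{\gamma_1\gamma_2}=d_{\gamma_1}+d_{\gamma_2}$, valid up to trace cyclicity), it suffices to treat a single transposition $\tau_V$ and then take the pointwise product of the resulting paths. For one transposition, put $p=1_{r(V)}$, $q=1_{s(V)}$, $e=1_V$ and $r_0=1-p-q$. Then $1_{\tau_V}=e+e^*+r_0$, which acts as the flip unitary on the $2\times2$ matrix block spanned by $p,q$ and as the identity on $r_0$. I would take the explicit rotation path
\[
\gamma(t)=r_0+\cos(\pi t/2)(p+q)+\sin(\pi t/2)(e-e^*),
\]
composed with a phase to correct the sign. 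Since the flip $\begin{pmatrix}0&1\\1&0\end{pmatrix}$ has determinant $-1$, a path from $1$ to the flip inside $U_2$ is for instance $\exp(i\pi t\,P)$, where $P=\tfrac12\begin{pmatrix}1&-1\\-1&1\end{pmatrix}$ is the projection onto the $-1$-eigenspace. Concretely,
\[
\gamma(t)=1+(e^{i\pi t}-1)P_V,\qquad P_V:=\tfrac12(p+q-e-e^*).
\]
This is a piecewise smooth path in $U(A)_0$ from $1$ to $1_{\tau_V}$. Its derivative satisfies $\gamma'\gamma^*=i\pi P_V$, so
\[
d_\gamma(\tau_\nu)=\tfrac12\tau_\nu(P_V)=\tfrac14\bigl(\nu(r(V))+\nu(s(V))\bigr)=\tfrac12\nu(r(V)),
\]
using invariance of $\nu$ and $\tau_\nu(e)=0$ (which holds because $e$ is supported off the unit space, as $r(V)\cap s(V)=\emptyset$). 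Thus $d_\gamma=\tfrac12 D_A(\mu_0([1_{r(V)}]))\in\tfrac12\Ima(D_A\circ\mu_0)$.

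Finally I would concatenate, or equivalently take pointwise products of, these paths over $i=1,\dots,m$. This yields a path from $1$ to $U$ and shows $1_U\in U(A)_0$. Since $d$ is additive along products of paths, the total $d_\gamma=\tfrac12\sum_i D_A\mu_0([1_{r(V_i)}])$ lies in $\tfrac12\Ima(D_A\circ\mu_0)$. Because $\cH$ is principal, $M(\cH)$ is identified with $T(A)$, so $D_\cH=D_A$. Hence $2d_\gamma\in D_A(K_0(A))$, which gives $2\Delta(1_U)=0$.
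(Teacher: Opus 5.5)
Your proposal is correct and is essentially the paper's proof. The paper also writes $U$ as a product of transpositions, citing Theorem 3.3 (4) of \cite{Ma16Adv} for exactly the input you flag as the main obstacle: that $[1_U]=0$ forces $U$ to be such a product. For each factor, the paper notes that $1_{U_i}$ is unitarily equivalent to $1-2p_i$ and takes a path from $1$ to $1-2p_i$ with $d=\frac{1}{2}D_A(K_0(p_i))$; your explicit path $1+(e^{i\pi t}-1)P_V$ is just the conjugate of $1+(e^{i\pi t}-1)p_i$ under that equivalence, so the two computations coincide.
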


\begin{proof}
By \cite[Theorem 3.3 (4)]{Ma16Adv}, 
the full bisection $U$ can be written as a product of 
finitely many full bisections $U_1,U_2,\dots,U_n$, 
each of which is a transposition. 
Thus, for each $i$, 
there exists a compact open bisection $W_i\subset\cH$ such that 
$r(W_i)\cap s(W_i)=\emptyset$ and 
\[
U_i=W_i\sqcup W_i^{-1}
\sqcup\left(\cH^{(0)}\setminus(r(W_i)\cup s(W_i))\right). 
\]
Let $p_i:=1_{s(W_i)}\in C(\cH^{(0)})\subset A$. 
The unitary $1_{U_i}$ is unitarily equivalent to $1-2p_i$ in $A$. 
Indeed, on the corner corresponding to $r(W_i)\sqcup s(W_i)$, 
the unitary $1_{U_i}$ is the flip matrix, 
which is unitarily equivalent to $\operatorname{diag}(1,-1)$. 
Choose a smooth path $\gamma_i:[0,1]\to U(A)_0$ 
such that $\gamma_i(0)=1$, $\gamma_i(1)=1-2p_i$ and 
$d_{\gamma_i}=D_A(K_0(p_i))/2$. 
Combining these paths gives a piecewise smooth path 
$\gamma:[0,1]\to U(A)_0$ 
such that $\gamma(0)=1$, $\gamma(1)=1_U$ and 
\[
d_\gamma=\sum_{i=1}^n\frac{1}{2}D_A(K_0(p_i))
\in\frac{1}{2}\Ima(D_A\circ\mu_0), 
\]
as desired. 
\end{proof}

Let $\cG=\cH\rtimes_\theta\Z$ be as before. 
Consider the following diagram: 
\begin{equation}\label{GLdiagram}
\vcenter{
\xymatrix@M=8pt{
&&
H_1(\cH) \ar[r]^-{\id-H_1(\theta)} \ar[d]^-{\mu_1} & 
H_1(\cH) \ar[d]^-{\mu_1} \\
K_0(C^*_r(\cH)) \ar[r]^-{\iota} & 
K_0(C^*_r(\cG)) \ar[r]^-{\partial} & 
K_1(C^*_r(\cH)) \ar[r]^-{\id-K_1(\bar\theta)} & 
K_1(C^*_r(\cH)). 
}}
\end{equation}

\begin{proposition}\label{halfGL}
Suppose that $\cH$ is principal and almost finite. 
If $c\in K_0(C^*_r(\cG))$ and $h\in H_1(\cH)$ satisfy 
$\partial(c)=\mu_1(h)$ and $h=H_1(\theta)(h)$, then 
\[
D_\cG(c)\in\frac{1}{2}\Ima(D_\cG\circ\mu_0^\cG)+\Ima(D_\cG\circ\iota). 
\]
\end{proposition}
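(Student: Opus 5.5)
The plan is to first realize the class $h$ by a full bisection and produce an explicit element $c'\in K_0(C^*_r(\cG))$ with $\partial(c')=\mu_1(h)$ whose trace can be computed. Then $c-c'\in\Ker\partial=\Ima\iota$, so
\[
D_\cG(c)\in D_\cG(c')+\Ima(D_\cG\circ\iota),
\]
and it suffices to show $D_\cG(c')\in\frac12\Ima(D_\cG\circ\mu_0^\cG)+\Ima(D_\cG\circ\iota)$.

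\emph{Step 1: realizing $h$.} Since $\cH$ is almost finite, \cite[Theorem 7.5]{Ma12PLMS} gives a full bisection $U\subset\cH$ with $h=[1_U]$, so $\mu_1(h)=K_1(1_U)$. The hypothesis $h=H_1(\theta)(h)$ says $[1_{\theta(U)}]=[1_U]$. Hence the full bisection $V:=\theta(U)U^{-1}$ has $[1_V]=0$ in $H_1(\cH)$.

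\emph{Step 2: the path.} By the preceding lemma, $1_V=\bar\theta(1_U)1_U^*$ lies in $U(A)_0$, where $A=C^*_r(\cH)$. Moreover there is a piecewise smooth path $\gamma$ from $1$ to $1_V$ with $d_\gamma\in\frac12\Ima(D_A\circ\mu_0^\cH)$. Thus $t\mapsto\gamma(t)1_U$ is a path of unitaries from $1_U$ to $\bar\theta(1_U)$.

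\emph{Step 3: building $c'$.} I would invoke the standard description of the Pimsner--Voiculescu boundary map via the mapping torus. Let
\[
M_{\bar\theta}=\{f\in C([0,1],A)\mid f(1)=\bar\theta(f(0))\}.
\]
The path $t\mapsto\gamma(t)1_U$ gives a unitary in $M_{\bar\theta}$, hence a $K_1$-class there. Under the Connes--Thom isomorphism $K_1(M_{\bar\theta})\cong K_0(A\rtimes_{\bar\theta}\Z)$, this class yields $c'$. The evaluation map $K_1(M_{\bar\theta})\to K_1(A)$ corresponds to $\partial$ up to sign, so $\partial(c')=\pm K_1(1_U)$; adjust the sign if needed.

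\emph{Step 4: the trace formula.} For a $\theta$-invariant measure $\nu$, the trace formula for the boundary map should give
\[
\tau_{\nu,*}(c')=\pm\frac{1}{2\pi i}\int_0^1\tau_\nu\bigl(\tfrac{d}{dt}(\gamma(t)1_U)(\gamma(t)1_U)^*\bigr)\,dt=\pm d_\gamma(\nu|_{\cH})
\]
modulo $\tau_{\nu,*}(\iota(K_0(A)))$. The second equality holds because $1_U$ is constant, so the integrand reduces to $\tau_\nu(\gamma'\gamma^*)$. This is the version in Pimsner's and Kaminker--Putnam's work; equivalently, it is Connes' pairing of the dual trace with the mapping-torus unitary.

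\emph{Step 5: transferring to $\cG$.} Restriction identifies $M(\cG)$ with the $\theta$-invariant elements of $M(\cH)$. Also $\mu_0^\cG\circ H_0(\iota)=\iota\circ\mu_0^\cH$, by square (5) of \eqref{HKdiagram}. Together these show that $d_\gamma$, restricted to $M(\cG)$, lies in $\frac12\Ima(D_\cG\circ\mu_0^\cG)$, which concludes.

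The main obstacle is Step 4. One must state the trace formula for $\partial$ precisely, including the sign and the convention $uau^*=\bar\theta(a)$, and check it is exact up to $\Ima(D_\cG\circ\iota)$ rather than only up to rationals. I would first try to prove it directly in the Toeplitz extension used in the proof of Proposition \ref{HKcommute}, by writing down an explicit lift of a projection built from the path. If that becomes messy, I would fall back on citing the mapping-torus/Connes--Thom formula.
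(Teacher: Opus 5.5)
Your proposal is correct and follows essentially the paper's proof. You realize $h$ by a full bisection, apply the transposition lemma to $\theta(U)U^{-1}$ (the paper uses $W\theta(W)^{-1}$, which only reverses the path and a sign), invoke the trace formula for the Pimsner--Voiculescu boundary map, and transfer to $\cG$ via the compatibility of $\mu_0^{\cH}$, $\mu_0^{\cG}$ with $\iota$ and $H_0(\cH)\to H_0(\cG)$.

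The only difference is that the paper cites the trace formula (Blackadar, Theorem 10.10.4) directly for $c$, whereas you unpack it through an explicit mapping-torus lift $c'$. For that lift, Connes' Thom isomorphism gives the exact identity $\tau_{\nu,*}(c')=\pm d_\gamma(\tau_\nu|_{C^*_r(\cH)})$, so the ``modulo'' in Step 4 is unnecessary, and the single element $c-c'\in\Ima\iota$ works uniformly for all $\nu\in M(\cG)$.
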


\begin{proof}
Set $A:=C^*_r(\cH)$. 
By \cite[Theorem 7.5]{Ma12PLMS}, 
we may find a full bisection $W\subset\cH$ such that $[1_W]=h$. 
It follows from $h=H_1(\theta)(h)$ that 
\[
0=[1_W]-H_1(\theta)([1_W])=[1_W]-[1_{\theta(W)}]
=[1_{W\theta(W)^{-1}}]
\]
in $H_1(\cH)$. 
Hence the preceding lemma gives a piecewise smooth path 
$\gamma:[0,1]\to U(A)_0$ 
such that $\gamma(0)=1$, $\gamma(1)=1_W\bar\theta(1_W)^*$ and 
\[
d_\gamma\in\frac{1}{2}\Ima(D_A\circ\mu_0^\cH). 
\]
On the other hand, 
the trace formula for the Pimsner--Voiculescu boundary map 
\cite[Theorem 10.10.4]{Bl_Ktheory} provides an element 
$c_0\in K_0(C_r^*(\cH))$ such that 
\[
\tau_{\nu,*}(c)-d_\gamma\left(\tau_\nu|_A\right)
=\tau_{\nu,*}\left(\iota(c_0)\right)
\]
for every $\nu\in M(\cG)$. 
By the lemma above, the affine function on $M(\cG)$ given by 
$\nu\mapsto d_\gamma\left(\tau_\nu|_A\right)$ belongs to 
\[
\frac{1}{2}\Ima(D_{\cG}\circ\mu_0^{\cG}). 
\]
Indeed, this follows from the compatibility of $\mu_0$ 
with the natural maps $H_0(\cH)\to H_0(\cG)$ and 
$C_r^*(\cH)\to C_r^*(\cG)$. 
Consequently, 
\[
D_{\cG}(c)\in
\frac{1}{2}\Ima(D_{\cG}\circ\mu_0^{\cG})+\Ima(D_{\cG}\circ\iota). 
\]
\end{proof}

\section{Cohomology comparison maps}

Let $\Gamma$ be a countable discrete group. 
Fix $i\in\{0,1\}$ and $\sfb\in K_i(C^*_r(\Gamma))$. 
In this section, 
we would like to introduce the cohomology comparison maps 
\[
\mu^0:H^0(\cG)\to K_i(C^*_r(\cG))
\]
and 
\[
\mu^1:H^1(\cG)\to K_{i+1}(C^*_r(\cG)) 
\]
associated with $\sfb$, 
where $\cG:=X\rtimes\Gamma$ is a transformation groupoid 
on a Cantor set $X$. 

Consider an action $\Gamma\curvearrowright X$ 
on a Cantor set $X$ 
and let $\cG:=X\rtimes\Gamma$ be the transformation groupoid. 
We first define $\mu^0$. 
The group $H^0(\cG)$ identifies with $C(X,\Z)^\Gamma$, 
the group of $\Gamma$-invariant functions, and 
there exists a canonical map $C(X,\Z)^\Gamma\to K_0(C(X)^\Gamma)$ 
sending an integer-valued invariant function 
to the corresponding $K_0$-class. 
So $g\mapsto g\otimes\sfb$ defines a homomorphism 
from $H^0(\cG)\cong C(X,\Z)^\Gamma$ 
to $K_0(C(X)^\Gamma)\otimes K_i(C^*_r(\Gamma))$. 
Since $C(X)^\Gamma\otimes C^*_r(\Gamma)$ can be identified 
with a subalgebra of $C^*_r(\cG)$ 
via $f\otimes\lambda_\gamma\mapsto f u_\gamma$, 
we have a homomorphism 
from $K_0(C(X)^\Gamma)\otimes K_i(C^*_r(\Gamma))$
to $K_i(C^*_r(\cG))$. 
Let $\mu^0:H^0(\cG)\to K_i(C^*_r(\cG))$ 
be the composition of these homomorphisms. 

Now, let us define $\mu^1:H^1(\cG)\to K_{i+1}(C^*_r(\cG))$. 
Recall the $\T$-action map 
$k_\cG:H^1(\cG)\to KK^1(C^*_r(\cG),C^*_r(\cG))$ 
introduced in Definition \ref{kg}. 
For $\xi\in\Hom(\cG,\Z)$, we let 
\[
\mu^1([\xi]):=k_\cG([\xi])(\sfb)\in K_{i+1}(C^*_r(\cG)), 
\]
where $\sfb\in K_i(C^*_r(\Gamma))$ is regarded as 
an element of $K_i(C^*_r(\cG))$ 
via the embedding $C^*_r(\Gamma)\to C^*_r(\cG)$. 
We call $\mu^0$ and $\mu^1$ the cohomology comparison maps 
associated with $\sfb\in K_i(C^*_r(\Gamma))$.

Let $\Lambda$ be a countable discrete group and 
let $\Gamma:=\Lambda\rtimes_\theta\Z$ be a semidirect product 
by an automorphism $\theta\in\Aut(\Lambda)$. 
We fix $i\in\{0,1\}$, 
$\sfb_\Lambda\in K_i(C^*_r(\Lambda))$ and 
$\sfb_\Gamma\in K_{i+1}(C^*_r(\Gamma))$. 

Suppose that $\Gamma$ acts on a Cantor set $X$. 
Let $\cG:=X\rtimes\Gamma$ be the transformation groupoid of 
this action and 
let $\cH:=X\rtimes\Lambda\subset\cG$ be the subgroupoid 
associated with the restriction $\Lambda\curvearrowright X$. 
We may think of $\cG$ as a semidirect product of $\cH$ 
by an automorphism $\cH\to\cH$, 
which is an `extension' of $\theta$. 
With a slight abuse of notation, 
we write this automorphism as $\theta:\cH\to\cH$ as well. 
Let $\mu^j$ be the associated cohomology comparison maps. 
To lighten notation, 
we shall suppress the superscripts indicating the ambient groupoid. 
Thus the cohomology comparison maps 
associated with $\sfb_\Gamma$ for $\cG$ and 
those associated with $\sfb_\Lambda$ for $\cH$ 
will both be denoted by $\mu^j$. 
Strictly speaking, 
they should be written as $\mu^j_\cG$ and $\mu^j_\cH$, respectively. 
We also point out that their degrees are different, 
reflecting the fact that $\sfb_\Gamma\in K_{i+1}(C^*_r(\Gamma))$ 
whereas $\sfb_\Lambda\in K_i(C^*_r(\Lambda))$. 
With this convention, consider the following diagram: 
\begin{equation*}
\vcenter{
\xymatrix@M=8pt{
0 \ar[r] & 
H^0(\cG) \ar[r] \ar[d]_{\mu^0} \ar@{}[dr]|-{(1)} & 
H^0(\cH) \ar[r]^-{\id-H^0(\theta)} \ar[d]_{\mu^0} 
\ar@{}[dr]|-{(2)} & 
H^0(\cH) \ar[r]^-{\delta} \ar[d]_{\mu^0} \ar@{}[dr]|-{(3)} & 
\phantom{H^1(\cG)} \\
\ar[r] & 
K_{i+1}(C^*_r(\cG)) \ar[r]^-{\partial_{i+1}}  & 
K_{i}(C^*_r(\cH)) \ar[r]^-{\id-K_{i}(\bar\theta)} & 
K_{i}(C^*_r(\cH)) \ar[r]^-{\iota} & \phantom{H^1(\cG)}
}}
\end{equation*}
\begin{equation}\label{coHKdiagram}
\vcenter{
\xymatrix@M=8pt{
\phantom{H} \ar[r]^-{\delta} \ar@{}[dr]|-{(3)} & 
H^1(\cG) \ar[r] \ar[d]_{\mu^1} \ar@{}[dr]|-{(4)} &
H^1(\cH) \ar[r]^-{\id-H^1(\theta)} \ar[d]_{\mu^1} 
\ar@{}[dr]|-{(5)} & 
H^1(\cH) \ar[d]_{\mu^1} \ar[r] & \\
\phantom{H} \ar[r]^-{\iota} & 
K_{i}(C^*_r(\cG)) \ar[r]^-{\partial_i} & 
K_{i+1}(C^*_r(\cH)) \ar[r]^-{\id-K_{i+1}(\bar\theta)} & 
K_{i+1}(C^*_r(\cH)) \ar[r] & 
}}
\end{equation}
where the top horizontal sequence is 
a part of the long exact sequence in Lemma \ref{coLES}, 
the bottom horizontal sequence is 
a part of the Pimsner--Voiculescu exact sequence in $K$-theory, and 
the automorphism of $C^*_r(\cH)$ induced by $\theta\in\Aut(\cH)$ 
is denoted by $\bar\theta$. 

\begin{proposition}\label{coHKcommute}
Suppose that the connecting map 
$K_{i+1}(C^*_r(\Gamma))\to K_i(C^*_r(\Lambda))$ 
in the Pimsner--Voiculescu exact sequence 
for $C^*_r(\Gamma)=C^*_r(\Lambda)\rtimes_{\bar\theta}\Z$ 
sends $\sfb_\Gamma$ to $\sfb_\Lambda$. 
Then the diagram \eqref{coHKdiagram} is commutative. 
\end{proposition}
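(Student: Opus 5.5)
The strategy is to verify the five squares of \eqref{coHKdiagram} one at a time. Throughout, we use a fact that follows from the hypothesis: since $\sfb_\Lambda$ is the image of $\sfb_\Gamma$ under the Pimsner--Voiculescu connecting map, it lies in $\Ker(\id-K_i(\bar\theta))$ for $C^*_r(\Lambda)$. Hence $\bar\theta_*(\sfb_\Lambda)=\sfb_\Lambda$. Squares (1), (2), (4), (5) should follow from naturality. Square (3) involves the connecting map $\delta$ of Lemma~\ref{coLES} and will be the real work.

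\emph{Square (1).} Let $f\in C(X,\Z)^\Gamma$. Then $f$ commutes with $u$ and with $C^*_r(\Lambda)$. Hence $C(X)^\Gamma\otimes(C^*_r(\Lambda)\rtimes_{\bar\theta}\Z)=(C(X)^\Gamma\otimes C^*_r(\Lambda))\rtimes_{\id\otimes\bar\theta}\Z$ maps equivariantly into $C^*_r(\cH)\rtimes_{\bar\theta}\Z$. Naturality of the Pimsner--Voiculescu sequence (equivalently, of the Toeplitz extension) under this equivariant map gives the first ingredient. The second is compatibility of the boundary map with the external product by $K_0(C(X)^\Gamma)$. Together they yield $\partial_{i+1}(f\otimes\sfb_\Gamma)=f\otimes\partial(\sfb_\Gamma)=f\otimes\sfb_\Lambda$, which is square (1).

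\emph{Square (2).} For $f\in C(X,\Z)^\Lambda$, the automorphism $\bar\theta$ sends $f\otimes\sfb_\Lambda$ to $(f\circ\theta^{-1})\otimes\bar\theta_*(\sfb_\Lambda)$. By the fact above this equals $(f\circ\theta^{-1})\otimes\sfb_\Lambda$, so $\mu^0$ intertwines $H^0(\theta)$ and $K_i(\bar\theta)$.

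\emph{Square (5).} Write $\theta^*\xi$ for the pullback of $\xi$ through $\theta^{-1}$. One has $\bar\theta\circ\alpha_{\xi,t}=\alpha_{\theta^*\xi,t}\circ\bar\theta$. Hence $k_\cH(\theta^*[\xi])=\bar\theta_*\circ k_\cH([\xi])\circ\bar\theta_*^{-1}$ in $KK^1$. Evaluating at $\sfb_\Lambda$ and using $\bar\theta_*(\sfb_\Lambda)=\sfb_\Lambda$ gives square (5).

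\emph{Square (4).} Let $\xi\in\Hom(\cG,\Z)$. Extend $\alpha_\xi$ to the Toeplitz algebra $A$ by letting $t\in\T$ act on $u\otimes v$ through the phase $e^{2\pi i\xi t}$, which is consistent because $\xi$ is a homomorphism. On the ideal $C^*_r(\cH)\otimes\K$, this action differs from $\alpha_{\xi|_\cH}\otimes\id$ only by conjugation by a diagonal unitary path in the multiplier algebra, built from the cocycle values $\xi(x,t^k)$ on the matrix units $q_k$. So the two actions define the same $KK$-class. The boundary map is the Kasparov product with the extension class. Naturality of this extension under the extended $\T$-action then yields $\partial_i\circ k_\cG([\xi])=\pm k_\cH([\xi|_\cH])\circ\partial_{i+1}$. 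We apply this at $\sfb_\Gamma$ and track the sign from the graded commutativity of odd $KK$-elements.

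\emph{Square (3), the main obstacle.} Take $f\in C(X,\Z)^\Lambda$. A representative of $\delta([f])$ is the cocycle $\xi_f$ that vanishes on $\cH$ and has $\xi_f(x,t)=f(x)$ on the generator $t$ of $\Z$; its values on $t^k$ are the corresponding Birkhoff sums. We must show
\[
k_\cG([\xi_f])(\sfb_\Gamma)=\iota(f\otimes\sfb_\Lambda).
\]
The action $\alpha_{\xi_f}$ fixes $C^*_r(\cH)$ and sends $u$ to $e^{2\pi itf}u$. In the case $f=1$ it is the dual action. There the needed statement is the classical identity that $k$ of the dual action equals $\iota\circ\partial$ in $KK^1$; this is Connes' Thom isomorphism/Pimsner--Voiculescu picture, and the sign must be checked against the convention $uau^*=\bar\theta(a)$. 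For general $f$, I would first use additivity of $k_\cG$ and of both sides in $f$ to reduce to $f=1_E$ with $E$ a $\Lambda$-invariant clopen set. Then I would establish a ``localized'' version of the dual-action identity: $k_\cG([\xi_f])(x)=\iota(f\cdot\partial x)$ for all $x\in K_*(C^*_r(\cG))$. To prove it, I would run the Toeplitz computation with $u\otimes v$ replaced by $(1_E u+1_{X\setminus E})\otimes\cdots$, or realize $\alpha_{\xi_f}$ as a dual-type action on an equivariant corner, and compare boundary maps. Applying the identity at $x=\sfb_\Gamma$ together with $\partial(\sfb_\Gamma)=\sfb_\Lambda$ gives square (3). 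I expect this localization to be the hardest step, because $1_E$ is not $\theta$-invariant and so does not commute with $u$.
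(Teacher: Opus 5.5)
\paragraph{Squares (1), (2), (4), (5).} Your squares (1), (2) and (5) agree with the paper's argument. Your treatment of (4) is also sound: you restrict $\alpha_{\xi,t}\otimes\id$ to the Toeplitz algebra $A$ and observe that on $C^*_r(\cH)\otimes\K$ it differs from $\alpha_{\xi|_\cH,t}\otimes\id$ by a diagonal multiplier loop. This is a correct and more detailed version of the ``naturality of the PV sequence'' that the paper simply invokes. The leftover $\pm$ is only the convention identifying the suspended boundary map with $\partial_i$; you should fix it once rather than leave it open.

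\paragraph{The gap: square (3).} This is the only square with real content. You reduce it to the localized identity $k_\cG([\xi_f])(x)=\iota(f\cdot\partial x)$ but do not prove it. The case $f=1$ is only cited, and its sign is unchecked. Neither of your suggested routes for $f=1_E$ is carried out:
\begin{itemize}
\item The Toeplitz route fails as written. When $\theta(E)\neq E$, the element $y:=1_Eu+1_{X\setminus E}$ is not a unitary; one computes $y^*y=1_{\theta^{-1}(E)}+1_{X\setminus E}$.
\item The corner route is not developed. Moreover, $1_EC^*_r(\cG)1_E$ is not in general a crossed product by $\Z$ on which $\alpha_{\xi_f}$ is a dual action.
\end{itemize}
As it stands, the proposal does not establish (3).

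\paragraph{The missing idea.} Avoid arbitrary classes and choose a convenient representative of $\sfb_\Gamma$, as the paper does.
\begin{itemize}
\item For $i=0$, tensor with $\cO_\infty$. For $i=1$, tensor with the UCT Kirchberg algebra $\mathcal{P}$ with $K_0(\mathcal{P})=0$ and $K_1(\mathcal{P})=\Z$.
\item Then $\sfb_\Lambda$ is the class of a properly infinite full projection $p\in C^*_r(\Lambda)\otimes\cO_\infty$.
\item Since $K_0(\bar\theta)$ fixes $[p]$, there is a unitary $v\in C^*_r(\Lambda)\otimes\cO_\infty$ with $v\bar\theta(p)v^*=p$.
\item After correcting $v$ by a unitary in the full corner $p(C^*_r(\Lambda)\otimes\cO_\infty)p$, the unitary $w:=vup+(1-p)$ satisfies $K_1(w)=\sfb_\Gamma$.
\end{itemize}
Now take $g=1_E$ with $E$ a $\Lambda$-invariant clopen set. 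Then $\alpha_{\xi,t}$ fixes $C^*_r(\cH)$ and sends $u$ to $e^{2\pi itg}u$. Also, $g$ commutes with $v$ and $\bar\theta(p)$, because these lie in $C^*_r(\Lambda)\otimes\cO_\infty$. Hence
\[
\alpha_{\xi,t}(w)w^*=ve^{2\pi itg}\bar\theta(p)v^*+(1-p)=e^{2\pi itg}p+(1-p)=\exp(2\pi it\,gp),
\]
which is a loop representing $K_0(gp)=\iota(\mu^0(g))$.

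The failure of $\theta$-invariance of $E$, which you identified as the obstacle, never enters. The unitary $u$ appears only through $upu^*=\bar\theta(p)$, and only the $\Lambda$-invariance of $g$ is used. Running the same computation with $p,v\in C^*_r(\cH)\otimes\cO_\infty$, and using that $g$ is central in $C^*_r(\cH)$, would also prove your localized identity for every $x$, including the dual-action case $g=1$.
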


\begin{proof}
The commutativity of the square (1) follows from the diagram: 
\begin{equation*}
\vcenter{
\xymatrix@M=8pt{
H^0(\cG) \ar[r] \ar[d] \ar@{}[dr]|(.5){(1')} & 
H^0(\cH) \ar[d] & \\
K_0(C(X)^\Gamma)\otimes K_{i+1}(C^*_r(\Gamma)) \ar[r] \ar[d] & 
K_0(C(X)^\Lambda)\otimes K_{i}(C^*_r(\Lambda)) \ar[d] \\
K_{i+1}(C^*_r(\cG)) \ar[r]^-{\partial_{i+1}}  & 
K_{i}(C^*_r(\cH)). 
}}
\end{equation*}
The square $(1')$ commutes 
because of our assumption on $\sfb_\Gamma$ and $\sfb_\Lambda$. 
Moreover, by exactness, this implies that $\sfb_\Lambda$, 
when viewed as an element in $K_i(C^*_r(\cH))$, 
is in the kernel of $\id-K_i(\bar\theta)$. 
Thus, one has $K_i(\bar\theta)(\sfb_\Lambda)=\sfb_\Lambda$. 
It follows that the square (2) is also commutative. 

We would like to show that the square (3) is commutative. 
First, let us consider the case $i=0$, that is, 
$\sfb_\Lambda$ belongs to $K_0(C^*_r(\Lambda))$. 
There exist natural isomorphisms 
$K_0(C^*_r(\Lambda))\cong K_0(C^*_r(\Lambda)\otimes\cO_\infty)$ and 
\[
K_0(C^*_r(\Gamma))\cong K_0(C^*_r(\Gamma)\otimes\cO_\infty)
=K_0((C^*_r(\Lambda)\rtimes_{\bar\theta}\Z)\otimes\cO_\infty)
=K_0((C^*_r(\Lambda)\otimes\cO_\infty)
\rtimes_{\bar\theta\otimes\id}\Z), 
\]
and so we may work in the $C^*$-algebras tensored with $\cO_\infty$. 
By \cite[Proposition 4.1.4]{Ro_text}, 
we can find a properly infinite, full projection 
$p\in C^*_r(\Lambda)\otimes\cO_\infty$ 
whose $K_0$-class equals $\sfb_\Lambda$. 
Since $K_0(\bar\theta)$ fixes $\sfb_\Lambda$, 
by \cite[Proposition 4.1.4]{Ro_text}, 
there exists a unitary $v\in C^*_r(\Lambda)\otimes\cO_\infty$ 
such that $v\bar\theta(p)v^*=p$ and the $K_1$-class of the unitary 
\[
w:=vup+(1-p)\in C^*_r(\Gamma)\otimes\cO_\infty
\]
equals $\sfb_\Gamma$, 
where 
\[
u\in C^*_r(\Gamma)\otimes\cO_\infty
=(C^*_r(\Lambda)\otimes\cO_\infty)
\rtimes_{\bar\theta\otimes\id}\Z
\]
denotes the implementing unitary. 
Indeed, first choose $v_0$ with $v_0\bar\theta(p)v_0^*=p$. 
Then the class of $v_0up+(1-p)$ has boundary 
$K_0(p)=\sfb_\Lambda$, 
and hence differs from $\sfb_\Gamma$ by an element 
coming from $K_1(C^*_r(\Lambda)\otimes\cO_\infty)$. 
Since $p$ is full, the inclusion $pAp\to A$, 
where $A=C^*_r(\Lambda)\otimes\cO_\infty$, 
induces an isomorphism on $K_1$. 
Thus we may choose a unitary in the corner $pAp$ 
representing the required correction, 
and after replacing $v_0$ by the corrected unitary 
we obtain $v$ with $K_1(w)=\sfb_\Gamma$. 

Since $C(X,\Z)^\Lambda$ is generated by 
the indicator functions of $\Lambda$-invariant clopen subsets, 
it suffices to consider $g=1_A$. 
Then $\iota(\mu^0(g))\in K_0(C^*_r(\cG))$ is given 
by the projection $gp\in C^*_r(\cG)\otimes\cO_\infty$ 
(precisely speaking, $g$ should be $g\otimes1_{\cO_\infty}$, 
but we adopt simplified notation). 
On the other hand, 
one can find a homomorphism $\xi:\cG=X\times\Lambda\times\Z\to\Z$ 
satisfying 
\[
\xi(x,l,0)=0\quad\text{and}\quad 
\xi(x,l,1)=g(x)\quad\forall (x,l)\in\cH=X\times\Lambda, 
\]
and $\delta(g)$ is equal to 
the equivalence class of $\xi$ in $H^1(\cG)$. 
Let $\alpha_\xi:\T\curvearrowright C^*_r(\cG)$ be as in Section 2.2. 
By the definition of $\xi$, 
$\alpha_{\xi,t}(a)=a$ for all $a\in C^*_r(\cH)$ and 
\[
\alpha_{\xi,t}(u)=e^{2\pi itg}u. 
\]
Since $g$ is $\Lambda$-invariant, it commutes with $C^*_r(\Lambda)$. 
Hence
\[
\alpha_{\xi,t}(w)w^*=e^{2\pi itg}p+(1-p)
=\exp(2\pi it\, gp).
\]
Consequently, 
the loop $t\mapsto \alpha_{\xi,t}(w)w^*$ represents $K_0(gp)$, 
which means 
\[
\mu^1(\delta(g))=\mu^1([\xi])
=k_\cG([\xi])(\sfb_\Gamma)
=K_0(gp)=\iota(\mu^0(g)). 
\]
Thus, we have completed the proof of the case $i=0$. 
When $i=1$, the element $\sfb_\Lambda$ is in $K_1(C^*_r(\Lambda))$. 
Let $\mathcal{P}$ be the unital UCT Kirchberg algebra 
such that $K_0(\mathcal{P})=0$ and $K_1(\mathcal{P})=\Z$. 
Tensoring $\mathcal{P}$ instead of $\cO_\infty$ 
shifts the degree by one. 
Under the resulting identifications, 
the preceding argument applies verbatim. 

Now we turn to the square (4). 
Pick a homomorphism $\xi:\cG\to\Z$. 
Let $\eta:\cH\to\Z$ be the restriction of $\xi$ to $\cH$. 
Then, 
the homomorphism $H^1(\cG)\to H^1(\cH)$ sends $\xi$ to $\eta$. 
As described in Section 2.2, we can define the two homomorphisms 
\[
\tilde\alpha_\xi:C^*_r(\cG)\to C(\T)\otimes C^*_r(\cG)
\]
and 
\[
\tilde\alpha_\eta:C^*_r(\cH)\to C(\T)\otimes C^*_r(\cH). 
\]
Consider the following commutative diagram: 
\begin{equation*}
\vcenter{
\xymatrix@M=8pt{
\ar[r] & 
K_{i+1}(C^*_r(\cG)) \ar[r]^-{\partial_{i+1}} 
\ar[d]_-{K_{i+1}(\tilde\alpha_\xi)-K_{i+1}(j)} & 
K_{i}(C^*_r(\cH)) \ar[r] 
\ar[d]^-{K_{i}(\tilde\alpha_\eta)-K_{i}(j)} & & \\
\ar[r] & 
K_{i+1}(C_0(\R)\otimes C^*_r(\cG)) \ar[r]^-{\partial_{i}} & 
K_{i}(C_0(\R)\otimes C^*_r(\cH)) \ar[r], & & 
}}
\end{equation*}
where each horizontal sequence is 
a part of the Pimsner--Voiculescu exact sequence. 
The diagram commutes by the naturality of the PV exact sequence. 
We compute 
\begin{align*}
\partial_i(\mu^1([\xi]))
&=\partial_i\left(k_\cG([\xi])(\sfb_\Gamma)\right) \\
&=\partial_i\left(
(K_{i+1}(\tilde\alpha_\xi)-K_{i+1}(j))(\sfb_\Gamma)\right) \\
&=(K_{i}(\tilde\alpha_\eta)-K_{i}(j))
\left(\partial_{i+1}(\sfb_\Gamma)\right) \\
&=(K_{i}(\tilde\alpha_\eta)-K_{i}(j))(\sfb_\Lambda) \\
&=k_\cH([\eta])(\sfb_\Lambda)=\mu^1([\eta]), 
\end{align*}
and so the square (4) is commutative. 

Finally, in order to check (5), take $\eta\in\Hom(\cH,\Z)$. 
For any $a\in C_c(\cH)$, $t\in\T$ and $g\in\cH$, it holds that 
\begin{align*}
\alpha_{\eta\circ\theta^{-1},t}(a)(g)
&=e^{2\pi it\eta(\theta^{-1}(g))}a(g) \\
&=e^{2\pi it\eta(\theta^{-1}(g))}
\bar\theta^{-1}(a)(\theta^{-1}(g)) \\
&=\alpha_{\eta,t}(\bar\theta^{-1}(a))(\theta^{-1}(g))
=\left(\bar\theta\circ\alpha_{\eta,t}\circ\bar\theta^{-1}\right)
(a)(g). 
\end{align*}
Hence 
\begin{align*}
k_\cH(H^1(\theta)([\eta]))
&=KK(\tilde\alpha_{\eta\circ\theta^{-1}})-KK(j) \\
&=KK\left((\id\otimes\bar\theta)\circ\tilde\alpha_{\eta}
\circ\bar\theta^{-1}\right)-KK(j) \\
&=KK(\id\otimes\bar\theta)\cdot(KK(\tilde\alpha_\eta)-KK(j))
\cdot KK(\bar\theta^{-1}) \\
&=KK(\id\otimes\bar\theta)\cdot k_\cH([\eta])
\cdot KK(\bar\theta^{-1}). 
\end{align*}
It follows from $K_i(\bar\theta)(\sfb_\Lambda)=\sfb_\Lambda$ that 
\begin{align*}
\mu^1(H^1(\theta)([\eta]))
&=k_\cH(H^1(\theta)([\eta]))(\sfb_\Lambda) \\
&=\left(KK(\id\otimes\bar\theta)\cdot k_\cH([\eta])
\cdot KK(\bar\theta^{-1})\right)(\sfb_\Lambda) \\
&=\left(KK(\id\otimes\bar\theta)\cdot k_\cH([\eta])\right)
(\sfb_\Lambda) \\
&=KK(\id\otimes\bar\theta)(\mu^1([\eta]))
=K_{i+1}(\bar\theta)(\mu^1([\eta])). 
\end{align*}
Therefore, the square (5) is commutative. 
\end{proof}

\section{Actions of poly-$\Z$ groups}

In this section, 
we discuss the HK (Definition \ref{HK}) and GL (Definition \ref{GL}) 
for free actions of poly-$\Z$ groups of Hirsch length 
at most four, and also for a special class of 
poly-$\Z$ groups of Hirsch length five. 
It is known that 
the transformation groupoids of such actions are almost finite 
(\cite{KN26Compos,Na24JFA}).

\subsection{Poly-$\Z$ groups of Hirsch length less than three}

First, 
we consider the poly-$\Z$ groups of Hirsch length less than $3$, 
i.e. $\Z$, $\Z^2$ and the Klein bottle group. 
Let $\sfb_{\Z^2}\in K_0(C^*_r(\Z^2))\cong\Z^2$ be $K_0(e)-K_0(1)$, 
where $e\in M_2(C^*_r(\Z^2))=M_2(C(\T^2))$ is the Bott projection. 
We choose a generator $\sfb_\Z\in K_1(C^*_r(\Z))\cong\Z$ 
so that, for the decomposition $\Z^2=\Z\rtimes\Z$ 
with trivial action, 
the Pimsner--Voiculescu boundary map sends 
$\sfb_{\Z^2}$ to $\sfb_{\Z}$.

We begin with $\Z$. 
The following is well-known and easy to verify. 

\begin{theorem}\label{Z}
Let $\cG$ be the transformation groupoid 
arising from a free action of $\Z$ on a Cantor set $X$. 
\begin{enumerate}
\item The homology comparison maps 
$\mu_0:H_0(\cG)\to K_0(C^*_r(\cG))$ 
and $\mu_1:H_1(\cG)\to K_1(C^*_r(\cG))$ are isomorphisms. 
\item The cohomology comparison maps 
$\mu^0:H^0(\cG)\to K_1(C^*_r(\cG))$ 
and $\mu^1:H^1(\cG)\to K_0(C^*_r(\cG))$ 
associated with $\sfb_\Z$ are isomorphisms. 
\end{enumerate}
In particular, HK and GL hold true for $\cG$. 
\end{theorem}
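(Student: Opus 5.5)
We view $\cG$ as $\cH\rtimes_\theta\Z$, where $\cH=X$ is the trivial groupoid (unit space only, so $C^*_r(\cH)=C(X)$) and $\theta$ is the generating homeomorphism. Then $H_n(\cH)$ is $C(X,\Z)$ for $n=0$ and vanishes for $n>0$. Likewise $H^n(\cH)$ is $C(X,\Z)$ for $n=0$ and vanishes for $n>0$. Since $X$ is a Cantor set, $K_0(C(X))=C(X,\Z)$ and $K_1(C(X))=0$.

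\textbf{Part (1).} Apply Proposition \ref{HKcommute}. This is legitimate because $\cG$ is almost finite for free $\Z$-actions, as recalled at the start of this section. The diagram \eqref{HKdiagram} then reduces to the following. The upper row is
\[
0\to H_1(\cG)\to C(X,\Z)\xrightarrow{\id-\theta_*}C(X,\Z)\to H_0(\cG)\to 0,
\]
and the lower row is
\[
0\to K_1(C^*_r(\cG))\to K_0(C(X))\to K_0(C(X))\to K_0(C^*_r(\cG))\to 0.
\]
The vertical maps on the $\cH$-terms are the canonical identifications $C(X,\Z)\cong K_0(C(X))$. The five lemma then shows that $\mu_0$ and $\mu_1$ are isomorphisms. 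More directly, $\mu_0$ is the induced map on cokernels and $\mu_1$ is the induced map on kernels, via square (3).

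\textbf{Part (2).} Apply Proposition \ref{coHKcommute} with $\Lambda=\{1\}$, $\Gamma=\Z$, $i=0$, $\sfb_\Lambda=[1]\in K_0(\C)$ and $\sfb_\Gamma=\sfb_\Z$. The hypothesis is that the PV boundary $K_1(C(\T))\to K_0(\C)$ sends $\sfb_\Z$ to $[1]$. This holds up to sign for any generator. The choice of $\sfb_\Z$ fixed above, via the Bott element and $\Z^2=\Z\rtimes\Z$, should be checked to give $+1$ (otherwise replace by $-1$; for isomorphism statements the sign is irrelevant). With these choices, $\mu^0$ for $\cH$ is the identification $C(X,\Z)\to K_0(C(X))$. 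The diagram \eqref{coHKdiagram} becomes the cohomology sequence
\[
0\to H^0(\cG)\to C(X,\Z)\xrightarrow{\id-H^0(\theta)}C(X,\Z)\to H^1(\cG)\to 0
\]
mapped to the same PV sequence as in part (1). Again the five lemma gives that $\mu^0:H^0(\cG)\to K_1$ and $\mu^1:H^1(\cG)\to K_0$ are isomorphisms.

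The main point to watch is that the map $\id-H^0(\theta)$, with the pullback convention via $\theta^{-1}$, matches $\id-K_0(\bar\theta)$ under the identification. This is exactly the convention $\bar\theta(f)=f\circ\theta^{-1}$. The rest is formal.

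\textbf{HK and GL.} $H_n(\cG)=H_n(\Z,C(X,\Z))=0$ for $n\ge2$, so part (1) gives HK. Since $\mu_0$ is surjective, $D_\cG(\mu_0(H_0(\cG)))=D_\cG(K_0)$, which is GL.
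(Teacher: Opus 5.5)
Your proof is correct and is essentially the paper's argument. The paper just cites the Pimsner--Voiculescu sequence together with the standard identifications of $H_0(\cG)$ and $H^1(\cG)$ with the coinvariants of $C(X,\Z)$, and of $H_1(\cG)$ and $H^0(\cG)$ with the invariants; specializing Propositions \ref{HKcommute} and \ref{coHKcommute} to the trivial subgroupoid $\cH=X$, as you do, makes explicit that $\mu_0,\mu_1,\mu^0,\mu^1$ are the maps inducing these identifications, and your handling of the sign (take $\sfb_\Lambda:=\partial(\sfb_\Z)=\pm[1]$) is fine.
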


\begin{proof}
This follows immediately from the Pimsner--Voiculescu exact sequence 
and the standard computation of the homology and cohomology of 
$\Z$ with coefficients in $C(X,\Z)$. 
\end{proof}

Next let us look at $\Z^2$. 

\begin{theorem}\label{Z^2}
Let $\cG$ be the transformation groupoid 
arising from a free action of $\Z^2$ on a Cantor set $X$. 
Consider the cohomology comparison maps 
associated with $\sfb_{\Z^2}$. 
Then, 
\[
\mu_0\oplus\mu^0:H_0(\cG)\oplus H^0(\cG)\to K_0(C^*_r(\cG)), 
\]
\[
\mu_1:H_1(\cG)\to K_1(C^*_r(\cG))
\]
and 
\[
\mu^1:H^1(\cG)\to K_1(C^*_r(\cG))
\]
are isomorphisms. 
In particular, HK and GL hold true for $\cG$. 
\end{theorem}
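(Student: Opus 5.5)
We write $\Z^2=\Z\rtimes\Z$ with trivial action. Put $\Lambda=\Z$, $\cH:=X\rtimes\Lambda$ (the first coordinate) and $\cG=\cH\rtimes_\theta\Z$, where $\theta$ is the second generator. By the choice of $\sfb_\Z$, the Pimsner--Voiculescu boundary map sends $\sfb_{\Z^2}$ to $\sfb_\Z$. Hence Proposition \ref{coHKcommute} applies with $i=1$, $\sfb_\Lambda=\sfb_\Z$ and $\sfb_\Gamma=\sfb_{\Z^2}$. Since $\cG$ is almost finite, Proposition \ref{HKcommute} applies as well. Theorem \ref{Z} says that for the free $\Z$-action, $\mu_0,\mu_1$ for $\cH$ and $\mu^0,\mu^1$ for $\cH$ (associated with $\sfb_\Z$) are isomorphisms. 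The plan is to glue the two commutative ladders (homology and cohomology) and use five-lemma arguments on the Pimsner--Voiculescu sequence
\[
0\to\Coker(\id-K_j(\bar\theta))\xrightarrow{\iota}K_j(C^*_r(\cG))\xrightarrow{\partial}\Ker(\id-K_{j-1}(\bar\theta))\to0 .
\]

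For $K_1$ we use diagram \eqref{HKdiagram}. Since $H_2(\cH)=0$, the homology sequence gives
\[
0\to\Coker(\id-H_1(\theta))\to H_1(\cG)\xrightarrow{\delta}\Ker(\id-H_0(\theta))\to0 .
\]
The vertical maps $\mu_1$ (on $\cH$) and $\mu_0$ (on $\cH$) are isomorphisms on the outer terms, so the five lemma makes $\mu_1:H_1(\cG)\to K_1(C^*_r(\cG))$ an isomorphism. In the same way, diagram \eqref{coHKdiagram} with $i=1$ gives
\[
0\to\Coker(\id-H^0(\theta))\xrightarrow{\delta}H^1(\cG)\to\Ker(\id-H^1(\theta))\to0
\]
(using $H^2(\cH)=0$). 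Here $\mu^0$ for $\cH$ is an isomorphism onto $K_1(C^*_r(\cH))$ and $\mu^1$ for $\cH$ is an isomorphism onto $K_0(C^*_r(\cH))$, and squares (3) and (4) give compatibility with $\iota$ and $\partial_1$. Again the five lemma shows that $\mu^1:H^1(\cG)\to K_1(C^*_r(\cG))$ is an isomorphism.

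The $K_0$ statement is the main point. In the homology ladder, $H_0(\cG)=\Coker(\id-H_0(\theta))$, and $\mu_0$ identifies it with $\Coker(\id-K_0(\bar\theta))=\iota(K_0(C^*_r(\cH)))$. So $\mu_0$ is injective, with image equal to $\Ker\partial_0$. In the cohomology ladder, square (1) gives $\partial_0\circ\mu^0=\mu^0_\cH\circ(\text{restriction})$. The restriction map $H^0(\cG)\to H^0(\cH)$ is an isomorphism onto $\Ker(\id-H^0(\theta))$, and $\mu^0_\cH$ carries this isomorphically onto $\Ker(\id-K_1(\bar\theta))=\Ima\partial_0$. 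Therefore $\partial_0\circ\mu^0$ is an isomorphism from $H^0(\cG)$ onto $\Ima\partial_0$. It follows that $\mu_0\oplus\mu^0$ is an isomorphism: $\mu^0$ gives a splitting of the short exact sequence, and $\mu_0$ maps onto the kernel. The only real care needed is the degree bookkeeping: with $i=1$, check that $\mu^0_\cG$ lands in $K_{i+1}=K_0$ and $\mu^0_\cH$ lands in $K_1$, as the top row of \eqref{coHKdiagram} requires. I expect this verification to be the main obstacle.

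Finally, we deduce HK and GL. Poincaré duality for $\Z^2$ gives $H^0(\cG)\cong H_2(\cG)$ and $H_n(\cG)=0$ for $n\ge3$, so HK follows. For GL, every $c\in K_0(C^*_r(\cG))$ decomposes as $c=\mu_0(a)+\mu^0(b)$. It suffices to show $\tau_{\nu,*}(\mu^0(b))=0$. Take $b=1_A$ for an invariant clopen set $A$. Then $\mu^0(b)$ is the class of $1_A(e-1)$, which is supported in the commutative subalgebra $1_A\otimes C(\T^2)$. Its trace is $\nu(A)$ times the rank-zero integral of $e-1$ against Haar measure on $\T^2$, which vanishes. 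Hence $D_\cG(K_0)=D_\cG(\mu_0(H_0(\cG)))$, which is GL.
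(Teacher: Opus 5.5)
Your proposal is correct and follows the paper's proof. Both write $\Z^2=\Z\rtimes\Z$ with trivial action, feed the isomorphisms of Theorem~\ref{Z} for $\cH$ into Propositions~\ref{HKcommute} and \ref{coHKcommute} (with $i=1$), and use the five lemma; your explicit splitting argument for $\mu_0\oplus\mu^0$ is the same as the paper's five lemma on the glued ladder, and the deductions of HK by Poincar\'e duality and of GL from the vanishing trace of the Bott class match as well. One minor point: the appeals to $H_2(\cH)=0$ and $H^2(\cH)=0$ are unnecessary, since both short exact sequences follow directly from exactness of the long sequences.
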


\begin{proof}
We regard $\Gamma:=\Z^2$ 
as a semidirect product of $\Lambda:=\Z$ by the trivial action. 
Let $\cH:=X\rtimes\Lambda$. 
By the theorem above, for $i=0,1$, 
$\mu_i:H_i(\cH)\to K_i(C^*_r(\cH))$ is an isomorphism. 
Proposition \ref{HKcommute} then gives that 
$\mu_1:H_1(\cG)\to K_1(C^*_r(\cG))$ is also an isomorphism. 
In the same way, by the theorem above, for $i=0,1$, 
$\mu^i:H^i(\cH)\to K_{i+1}(C^*_r(\cH))$ is an isomorphism. 
Proposition \ref{coHKcommute} then implies that 
$\mu^1:H^1(\cG)\to K_1(C^*_r(\cG))$ is an isomorphism as well. 

To analyze 
$\mu_0\oplus\mu^0:H_0(\cG)\oplus H^0(\cG)\to K_0(C^*_r(\cG))$, 
we combine 
the two diagrams \eqref{HKdiagram} and \eqref{coHKdiagram}. 
\begin{equation*}
\vcenter{
\xymatrix@M=8pt{
H_0(\cH) \ar[r]^{\id-H_0(\theta)} \ar[d]_{\mu_0}^-{\cong} & 
H_0(\cH) \ar[r] \ar[d]_{\mu_0}^-{\cong} & 
H_0(\cG)\oplus H^0(\cG) \ar[r] \ar[d]_-{\mu_0\oplus\mu^0} & 
H^0(\cH) \ar[r]^-{\id-H^0(\theta)} \ar[d]_-{\mu^0}^{\cong} & 
H^0(\cH) \ar[d]_-{\mu^0}^{\cong} \\
K_0(C^*_r(\cH)) \ar[r]^{\id-K_0(\bar\theta)} & 
K_0(C^*_r(\cH)) \ar[r]^-{\iota} & 
K_0(C^*_r(\cG)) \ar[r]^-{\partial} & 
K_1(C^*_r(\cH)) \ar[r]^{\id-K_1(\bar\theta)} & 
K_1(C^*_r(\cH))
}}
\end{equation*}
The five lemma implies that 
$\mu_0\oplus\mu^0$ is an isomorphism. 

By Poincar\'e duality for $\Z^2$ with coefficients in $C(X,\Z)$, 
$H^0(\cG)$ is isomorphic to $H_2(\cG)$, 
and so HK holds for $\cG$. 
For $f\in H^0(\cG)=C(X,\Z)^{\Gamma}$, 
the class $\mu^0(f)$ is obtained from $f\otimes\sfb_{\Z^2}$. 
Since the Bott class $\sfb_{\Z^2}$ has zero trace 
under the canonical trace on $C^*_r(\Z^2)$, 
$D_\cG(\mu^0(f))=0$. 
Hence the image of $\mu^0$ is contained in the kernel of $D_\cG$, 
and so GL holds for $\cG$. 
\end{proof}

Finally, we consider actions of the Klein bottle group $\Gamma$, 
which is the semidirect product of $\Lambda:=\Z$ 
by the nontrivial automorphism $\theta$. 
For the group $\Gamma$, 
we cannot use the cohomology comparison maps 
in the inductive manner above, 
because $\theta$ does not fix $\sfb_\Z$, 
i.e., $K_1(\bar\theta)(\sfb_\Z)=-\sfb_\Z$. 
Nevertheless, 
the $K$-groups of the groupoid $C^*$-algebra are 
calculated as follows. 

\begin{theorem}\label{Klein}
Let $\cG$ be the transformation groupoid 
arising from a free action of the Klein bottle group $\Gamma$ 
on a Cantor set $X$. 
\begin{enumerate}
\item The following sequence is split exact: 
\[
\xymatrix@M=8pt{
0 \ar[r] & 
H_0(\cG) \ar[r]^-{\mu_0} & 
K_0(C^*_r(\cG)) \ar[r] & 
H_2(\cG) \ar[r] & 0. 
}
\]
Thus, $K_0(C^*_r(\cG))$ is isomorphic to $H_0(\cG)\oplus H_2(\cG)$. 
In particular, $\mu_0:H_0(\cG)\to K_0(C^*_r(\cG))$ is injective. 
\item $\mu_1:H_1(\cG)\to K_1(C^*_r(\cG))$ is an isomorphism. 
\end{enumerate}
In particular, HK and GL hold true for $\cG$. 
\end{theorem}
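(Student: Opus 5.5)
Write $\Gamma=\Lambda\rtimes_\theta\Z$ with $\Lambda=\Z$ and $\theta$ the inversion. Put $\cH:=X\rtimes\Lambda$, so that $\cG=\cH\rtimes_\theta\Z$. The plan is to compare the homology long exact sequence of Lemma \ref{hoLES} with the Pimsner--Voiculescu sequence through the diagram \eqref{HKdiagram}. This diagram commutes by Proposition \ref{HKcommute}, because $\cG$ is almost finite. By Theorem \ref{Z}, the vertical maps $\mu_0,\mu_1$ for $\cH$ are isomorphisms.

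For (2), take the segment of \eqref{HKdiagram} from $H_1(\cH)\to H_1(\cH)$ through $H_1(\cG)\to H_0(\cH)\to H_0(\cH)$ and its $K$-theory counterpart. The five lemma then gives that $\mu_1$ for $\cG$ is an isomorphism, provided the homology sequence continues on the left with $H_2(\cG)\to H_1(\cH)$ and the $K$-theory sequence with $K_0(C^*_r(\cG))\to K_1(C^*_r(\cH))$. Only the four-term version is needed. Injectivity of $\mu_1$ needs the leftmost map to be surjective, and surjectivity needs the rightmost map to be injective. Both follow because the maps $\mu_0,\mu_1$ for $\cH$ at the ends are isomorphisms.

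For (1), cut both long exact sequences at degree zero to get short exact sequences
\[
0\to\Coker(\id-H_0(\theta))\to H_0(\cG)\to0,\qquad
0\to\Coker(\id-K_0(\bar\theta))\to K_0(C^*_r(\cG))\to\Ker(\id-K_1(\bar\theta))\to0.
\]
In the homology sequence, $H_0(\cG)\cong\Coker(\id-H_0(\theta))$ on $H_0(\cH)$. Since $\mu_0^\cH$ is an isomorphism intertwining $\id-H_0(\theta)$ with $\id-K_0(\bar\theta)$, this identifies $\mu_0^\cG$ with the inclusion of the cokernel term. Hence $\mu_0$ is injective with cokernel $\Ker(\id-K_1(\bar\theta))$. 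Via $\mu_1^\cH$, this cokernel is isomorphic to $\Ker(\id-H_1(\theta))\subset H_1(\cH)$, which by Lemma \ref{hoLES} is the image of $H_2(\cG)\to H_1(\cH)$. This map is injective, because $H_2(\cH)=0$ ($\Lambda=\Z$ has cohomological dimension one). Therefore the cokernel is isomorphic to $H_2(\cG)$, and we get the exact sequence in (1).

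The main obstacle is splitting this sequence. Here $\Ker(\id-K_1(\bar\theta))$ is a subgroup of $K_1(C^*_r(\cH))\cong H_1(\cH)\cong C(X,\Z)_{\Z}$, which need not be free. I would try first to identify the quotient with $H_2(\cG)\cong H^0(\cG, \text{orientation twist})$ by Poincar\'e duality for the nonorientable group $\Gamma$. This is the group of $\Gamma$-twisted-invariant continuous integer functions, a subgroup of $C(X,\Z)$ and hence free abelian. Any short exact sequence with free quotient splits. Failing that, I would build an explicit lift in the spirit of the $i=0$ case of Proposition \ref{coHKcommute}. Since $\mathsf{b}_\Z$ is anti-fixed rather than fixed, I would pass to the index-two subgroup $\Z^2\subset\Gamma$, use the maps $\mu^0$ there, and then induce up.

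HK then follows immediately, since $H_n(\cG)=0$ for $n\ge3$. For GL: the trace of a $K_0$-class depends on its image in the quotient only up to $\Ima(D_\cG\circ\mu_0)$. So it suffices to check that a lift of each quotient class can be chosen with zero trace. I would get this from the trace formula for the Pimsner--Voiculescu boundary map, as in Proposition \ref{halfGL}, with a sharper conclusion. The classes in $\Ker(\id-K_1(\bar\theta))$ come from $h$ with $H_1(\theta)h=h$. Moreover $H_1(\theta)$ acts by $-1$ on the $\Z$-direction, and representing bisections $W$ with $W\theta(W)^{-1}$ given explicitly should have determinant $0$ rather than just order two. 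Alternatively, I would pass to the double cover $\Z^2$, where the Bott-type lift has zero trace by Theorem \ref{Z^2}.
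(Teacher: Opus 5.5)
Your proofs of (2), of the exact sequence in (1), and of HK are correct. For the exact sequence you take a slightly more direct route than the paper. You identify $\Coker\mu_0\cong\Ker(\id-K_1(\bar\theta))$ with $\Ker(\id-H_1(\theta))\cong H_2(\cG)$ via $\mu_1^\cH$ and $H_2(\cH)=0$. The paper instead goes through $\mu^0$, the sign-twisted square relating $\id+H^0(\theta)$ to $\id-K_1(\bar\theta)$, and Poincar\'e duality for $\cH$.

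The ``main obstacle'' you see in the splitting comes from a misidentification. $H_1(\cH)=H_1(\Z,C(X,\Z))$ is the group of \emph{invariants} $C(X,\Z)^\Lambda$; the coinvariants form $H_0(\cH)$. So $\Ker(\id-H_1(\theta))$ is a subgroup of $C(X,\Z)$, hence free abelian, and the sequence splits at once. Your Poincar\'e-duality fallback is valid but unnecessary.

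The genuine gap is GL. Your claim that a representing bisection ``should have determinant $0$'' is unsupported. For an arbitrary full bisection $W$ with $[1_W]=h$, you only know that $W\theta(W)^{-1}$ lies in the kernel of the index map. The transposition lemma before Proposition \ref{halfGL} then gives only $d_\gamma\in\frac12\Ima(D\circ\mu_0)$, i.e.\ GL up to a factor of two.

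What is missing is a specific $\theta$-invariant representative. Under $H_1(\cH)\cong C(X,\Z)^\Lambda$, a class fixed by $H_1(\theta)$ is a $\Lambda$-invariant $h$ with $h\circ\theta=-h$; the sign comes from $\theta$ inverting the generator $a$ of $\Lambda$. Put $W:=\bigsqcup_n\{h=n\}\times\{a^n\}$, so that $1_W=\sum_n1_{\{h=n\}}z^n$ with $z=\lambda_a$.
\begin{itemize}
\item $W$ is a full bisection with $[1_W]=h$, because each $\{h=n\}$ is $\Lambda$-invariant.
\item Since $\bar\theta(z)=z^*$ and $\theta(\{h=n\})=\{h=-n\}$, one gets $\bar\theta(1_W)=1_W$.
\end{itemize}
The paper does the same with the unitaries $v_j=1_{P_j}\otimes z+1_{Q_j}\otimes z^*+(1_X-1_{P_j}-1_{Q_j})$, for a decomposition $h=\sum_j(1_{P_j}-1_{Q_j})$ with $\theta(P_j)=Q_j$.

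Then $1_W\bar\theta(1_W)^*=1$, so the determinant term in the Pimsner--Voiculescu trace formula vanishes for the constant path. Hence any $c$ with $\partial(c)=K_1(1_W)$ satisfies $D_\cG(c)\in\Ima(D_\cG\circ\iota)=\Ima(D_\cG\circ\iota\circ\mu_0^\cH)\subset\Ima(D_\cG\circ\mu_0)$, which is GL.

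Your double-cover alternative is likewise only a pointer. You would have to relate the boundary maps along the index-two inclusion $C^*_r(X\rtimes\Z^2)\to C^*_r(\cG)$, which introduces a factor of the form $\id+K_1(\bar\theta)^{\pm1}$. You would also have to check that the resulting classes exhaust $\Ker(\id-K_1(\bar\theta))$. Neither is addressed.
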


\begin{proof}
Let $\cH=X\rtimes\Lambda$. 
The same argument as in Theorem \ref{Z^2} shows that 
$\mu_1:H_1(\cG)\to K_1(C^*_r(\cG))$ is an isomorphism. 
The proof of Proposition \ref{coHKcommute} implies that 
the following diagram commutes: 
\begin{equation*}
\vcenter{
\xymatrix@M=8pt{
H^0(\cH) \ar[r]^-{\id+H^0(\theta)} \ar[d]_{\mu^0} & 
H^0(\cH) \ar[d]_{\mu^0} \\
K_{1}(C^*_r(\cH)) \ar[r]^-{\id-K_{1}(\bar\theta)} & 
K_{1}(C^*_r(\cH)). \\
}}
\end{equation*}
Hence we obtain the exact sequence 
\[
\xymatrix@M=8pt{
0 \ar[r] & 
H_0(\cG) \ar[r]^-{\mu_0} & 
K_0(C^*_r(\cG)) \ar[r] & 
\Ker(\id+H^0(\theta)) \ar[r] & 0. 
}
\]
Since Poincar\'e duality $H^0(\cH)\cong H_1(\cH)$ intertwines 
$H^0(\theta)$ and $-H_1(\theta)$, 
$\Ker(\id+H^0(\theta))$ is isomorphic to $H_2(\cG)$. 
Since 
\[
\Ker(\id+H^0(\theta))
=\{f\in C(X,\Z)^\Lambda\mid f\circ\theta=-f\}
\]
is free abelian, we obtain (1). 

By (1) and (2), HK holds for $\cG$. 
It remains for us to verify GL. 
Assume that $h\in H^0(\cH)$ belongs to $\Ker(\id+H^0(\theta))$. 
The element $h$ may be regarded as 
a $\Lambda$-invariant function on $X$ 
satisfying $h\circ\theta=-h$. 
There exist $\Lambda$-invariant clopen subsets $P_1,P_2,\dots,P_m$ 
and $Q_1,Q_2,\dots,Q_m$ of $X$ such that 
$P_j\cap Q_j=\emptyset$, $\theta(P_j)=Q_j$, $\theta(Q_j)=P_j$ 
and 
\[
h=\sum_{j=1}^m1_{P_j}-\sum_{j=1}^m1_{Q_j}. 
\]
Let $z\in C^*_r(\Lambda)=C^*_r(\Z)$ be the generating unitary 
representing $\sfb_\Lambda\in K_1(C^*_r(\Lambda))$. 
Then $\bar\theta(z)=z^*$. 
For each $j$, we put 
\[
v_j:=1_{P_j}\otimes z+1_{Q_j}\otimes z^*+(1_X-1_{P_j}-1_{Q_j})
\in C(X)^\Lambda\otimes C^*_r(\Lambda)\subset C^*_r(\cH). 
\]
Then $\mu^0(h)=\sum_jK_1(v_j)$, and 
\[
v_j\bar\theta(v_j^*)=1_{P_j}\otimes1
+1_{Q_j}\otimes1+(1_X-1_{P_j}-1_{Q_j})=1. 
\]
By the trace formula for the Pimsner--Voiculescu boundary map, 
as used in Proposition \ref{halfGL}, 
the only possible obstruction to 
lifting the trace value from $C^*_r(\cH)$ is 
the de la Harpe--Skandalis determinant of $v_j\bar\theta(v_j^*)$. 
Since this unitary is equal to $1$, 
the determinant term vanishes. 
Hence $\Ima D_\cG$ is contained in $\Ima(D_\cG\circ\iota)$. 
\end{proof}

\begin{remark}\label{KleinRemark}
In the theorem above, 
we may describe the splitting $H_2(\cG)\to K_0(C^*_r(\cG))$ 
in the following way. 
As mentioned in the proof, 
\[
H_2(\cG)\cong\Ker(\id+H^0(\theta))
=\{f\in C(X,\Z)^\Lambda\mid f\circ\theta=-f\}
\]
is generated by functions of the form $h=1_{P}-1_{Q}$ 
such that $\theta(P)=Q$ and $\theta(Q)=P$. 
Define a unitary 
$v\in C(X)^\Lambda\otimes C^*_r(\Lambda)\subset C^*_r(\cH)$ 
by 
\[
v:=1_{P}\otimes z+1_{Q}\otimes z^*+(1_X-1_{P}-1_{Q}). 
\]
Let $u\in C^*_r(\Gamma)\subset C^*_r(\cG)$ be the unitary 
implementing the automorphism $\bar\theta$. 
Then $v$ commutes with $u$ because $\bar\theta(v)=v$, 
and the Bott element $\Bott(v,u)\in K_0(C^*_r(\cG))$ 
is a lift of the function $1_P-1_Q\in\Ker(\id+H^0(\theta))$. 
Thus, 
with our convention for the Pimsner--Voiculescu boundary map,
$\partial(\Bott(v,u))=K_1(v)$, 
which corresponds to $1_P-1_Q$ under $\mu^0$. 
\end{remark}

\begin{remark}
The $K$-theoretic part of Theorem \ref{Klein} should be 
compared with earlier low-dimensional results on HK. 
Under the hypotheses of the spectral sequence constructed 
in \cite{PY22ETDS}, 
\cite[Remark 4.5]{PY22ETDS} shows that, 
if $H_k(\cG)=0$ for all $k\geq3$, then 
there is a short exact sequence 
\[
\xymatrix@M=8pt{
0 \ar[r] & 
H_0(\cG) \ar[r]^-{\mu_0} & 
K_0(C^*_r(\cG)) \ar[r] & 
H_2(\cG) \ar[r] & 0
}
\]
and $\mu_1:H_1(\cG)\to K_1(C^*_r(\cG))$ is an isomorphism. 
In particular, 
if $H_2(\cG)$ is free, then the exact sequence splits. 
In a related direction, 
\cite[Theorem 3.36]{BDGW23PLMS} proves that, 
for a principal ample groupoid $\cG$ 
with dynamic asymptotic dimension at most $d$, 
one has $H_n(\cG)=0$ for $n>d$, and 
$H_d(\cG)$ is torsion-free. 
Consequently, 
when the dynamic asymptotic dimension is at most $2$ and 
$H_2(\cG)$ is finitely generated, 
the same $K$-theoretic conclusion, and hence HK, follows; 
see \cite[Theorem 4.19]{BDGW23PLMS}. 
Relevant computations and related results can also be found 
in \cite[Section 6]{FKPS19Munster}. 
\end{remark}

\subsection{Poly-$\Z$ groups of Hirsch length three}

In this subsection, 
we treat actions of poly-$\Z$ groups of Hirsch length $3$. 
We divide these groups into the following three classes. 

First, let us consider a semidirect product $\Gamma$ of 
$\Lambda\cong\Z^2$ by an automorphism $\theta\in\Aut(\Lambda)$. 
The automorphism $\theta$ is represented 
by a matrix $A\in GL(2,\Z)$. 
We distinguish two cases: 
if $\det A=1$ then $K_0(\bar\theta)(\sfb_{\Z^2})=\sfb_{\Z^2}$; 
if $\det A=-1$ then $K_0(\bar\theta)(\sfb_{\Z^2})=-\sfb_{\Z^2}$. 
The last case is when $\Lambda$ is the Klein bottle group. 
We refer to these three classes as (3a), (3b) and (3c), respectively. 

Let us first focus on $\Gamma$ belonging to class (3a). 
Choose $\sfb_\Gamma\in K_1(C^*_r(\Gamma))$ so that 
the connecting map $K_1(C^*_r(\Gamma))\to K_0(C^*_r(\Lambda))$ 
sends $\sfb_\Gamma$ to $\sfb_{\Z^2}$. 

\begin{theorem}\label{Hirsch3a}
Let $\Gamma=\Lambda\rtimes_\theta\Z$ be as above and 
let $\cG$ be the transformation groupoid 
arising from a free action of $\Gamma$ on a Cantor set $X$. 
Then, 
$\mu_0\oplus\mu^1:H_0(\cG)\oplus H^1(\cG)\to K_0(C^*_r(\cG))$ 
and $\mu_1\oplus\mu^0:H_1(\cG)\oplus H^0(\cG)\to K_1(C^*_r(\cG))$ 
are isomorphisms. 
Moreover, HK holds true for $\cG$. 
\end{theorem}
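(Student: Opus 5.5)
The plan is to follow the argument of Theorem \ref{Z^2} one level up: compare the homology long exact sequence and the cohomology long exact sequence with the Pimsner--Voiculescu sequence for $C^*_r(\cG)=C^*_r(\cH)\rtimes_{\bar\theta}\Z$, where $\cH:=X\rtimes\Lambda$ and $\Lambda\cong\Z^2$. Theorem \ref{Z^2} will supply the input for $\cH$: $\mu_1:H_1(\cH)\to K_1(C^*_r(\cH))$ and $\mu^1:H^1(\cH)\to K_1(C^*_r(\cH))$ are isomorphisms, and $\mu_0\oplus\mu^0:H_0(\cH)\oplus H^0(\cH)\to K_0(C^*_r(\cH))$ is an isomorphism, all taken with respect to $\sfb_{\Z^2}=\sfb_\Lambda$. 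Since $\det A=1$, $K_0(\bar\theta)$ fixes $\sfb_\Lambda$, and $\sfb_\Gamma$ was chosen with $\partial(\sfb_\Gamma)=\sfb_\Lambda$. So Proposition \ref{coHKcommute} applies with $i=0$, and Proposition \ref{HKcommute} applies because $\cG$ is almost finite.

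For $K_1(C^*_r(\cG))$ I will splice the two sequences. The top row will read
\[
H_1(\cH)\xrightarrow{\id-H_1(\theta)}H_1(\cH)\to H_1(\cG)\oplus H^0(\cG)\to H_0(\cH)\oplus H^0(\cH)\xrightarrow{(\id-H_0(\theta))\oplus(\id-H^0(\theta))}H_0(\cH)\oplus H^0(\cH).
\]
This row is exact because $H^0(\cG)=\Ker(\id-H^0(\theta))$ (Lemma \ref{coLES}) and because of the homology sequence $H_1(\cH)\to H_1(\cH)\to H_1(\cG)\xrightarrow{\delta}\Ker(\id-H_0(\theta))\to0$. The middle map will be $\delta\oplus(\text{inclusion})$.

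The bottom row is the PV sequence $K_1(C^*_r(\cH))\to K_1(C^*_r(\cH))\to K_1(C^*_r(\cG))\xrightarrow{\partial}K_0(C^*_r(\cH))\to K_0(C^*_r(\cH))$. For the fourth and fifth vertical maps I will use $\mu_0\oplus\mu^0$ for $\cH$, and I must check that it intertwines $\id-K_0(\bar\theta)$ with the direct sum of $\id-H_0(\theta)$ and $\id-H^0(\theta)$. This holds by squares (4) of \eqref{HKdiagram} and (2) of \eqref{coHKdiagram}, since both comparison maps are natural in $\theta$. The middle square commutes by square (3) of \eqref{HKdiagram} and square (1) of \eqref{coHKdiagram}, the latter giving $\partial\circ\mu^0_\cG=\mu^0_\cH$. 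The five lemma then makes $\mu_1\oplus\mu^0$ an isomorphism.

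Symmetrically, for $K_0$ I will use
\[
H_0(\cH)\oplus H^0(\cH)\to H_0(\cH)\oplus H^0(\cH)\to H_0(\cG)\oplus H^1(\cG)\to H^1(\cH)\xrightarrow{\id-H^1(\theta)}H^1(\cH).
\]
Here the map into the middle is the quotient $H_0(\cH)\to H_0(\cG)$ on the first summand and $\delta$ on the second, and the map out of the middle is restriction on $H^1(\cG)$ (and zero on $H_0(\cG)$). Exactness at the middle comes from $\Coker(\id-H_0(\theta))\cong H_0(\cG)$ together with exactness of the cohomology sequence. The vertical map into $K_0(C^*_r(\cG))$ is $\mu_0\oplus\mu^1$. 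Commutativity comes from squares (5) of \eqref{HKdiagram} and (3), (4), (5) of \eqref{coHKdiagram}, where (3) gives $\iota\circ\mu^0=\mu^1\circ\delta$. The five lemma applies again.

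For HK I will use Poincar\'e duality for the orientable Poincar\'e duality group $\Gamma$ of dimension $3$ with coefficients in $C(X,\Z)$, which gives $H^0(\cG)\cong H_3(\cG)$ and $H^1(\cG)\cong H_2(\cG)$. Orientability should follow from $\det A=1$, which is exactly the condition that the action preserves the Bott class. Since $H_n(\cG)=0$ for $n>3$, this yields HK.

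The main obstacle is bookkeeping rather than any new idea: the spliced rows must really be exact, and the direct-sum vertical maps at the $\cH$ level must commute with the sum of the $\theta$-actions. The point to watch is that $\mu^0_\cH$ on $H^0(\cH)$ lands in $K_0$ and has the same degree as $\mu_0$, which is exactly why $i=0$ is needed. I would double-check that the sign in square (3) of \eqref{coHKdiagram} matches the convention for $\delta$, and that orientability of $\Gamma$ is correctly read off from $\det A=1$.
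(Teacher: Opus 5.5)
Your proposal is correct and follows the paper's proof essentially verbatim. The paper uses the same two spliced five-term diagrams built from \eqref{HKdiagram} and \eqref{coHKdiagram} with $\Phi_0=(\id-H_0(\theta))\oplus(\id-H^0(\theta))$, applies the five lemma using the isomorphisms for $\cH$ from Theorem~\ref{Z^2}, and gets HK from Poincar\'e duality $H^i(\cG)\cong H_{3-i}(\cG)$; your extra checks (exactness of the spliced rows, orientability of $\Gamma$ from $\det A=1$) only make explicit what the paper leaves implicit.
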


\begin{proof}
Set $\cH:=X\rtimes\Lambda$. 
Let $\Phi_0:=(\id-H_0(\theta))\oplus(\id-H^0(\theta))$. 
The two diagrams \eqref{HKdiagram} and \eqref{coHKdiagram} 
combine to give the following: 
\begin{equation*}
\vcenter{
\xymatrix@M=8pt{
H_0(\cH)\oplus H^0(\cH) \ar[r]^{\Phi_0} 
\ar[d]_{\mu_0\oplus\mu^0}^-{\cong} & 
H_0(\cH)\oplus H^0(\cH) \ar[r] \ar[d]_{\mu_0\oplus\mu^0}^-{\cong} & 
H_0(\cG)\oplus H^1(\cG) \ar[r] \ar[d]_-{\mu_0\oplus\mu^1} & 
\phantom{H} \\
K_0(C^*_r(\cH)) \ar[r]^{\id-K_0(\bar\theta)} & 
K_0(C^*_r(\cH)) \ar[r]^-{\iota} & 
K_0(C^*_r(\cG)) \ar[r]^-{\partial} & 
\phantom{H}
}}
\end{equation*}
\begin{equation*}
\vcenter{
\xymatrix@M=8pt{
\phantom{H} \ar[r] & 
H^1(\cH) \ar[r]^-{\id-H^1(\theta)} \ar[d]_-{\mu^1}^{\cong} & 
H^1(\cH) \ar[d]_-{\mu^1}^{\cong} \\
\phantom{H} \ar[r]^-{\partial} & 
K_1(C^*_r(\cH)) \ar[r]^{\id-K_1(\bar\theta)} & 
K_1(C^*_r(\cH)). 
}}
\end{equation*}
By Theorem \ref{Z^2}, 
$\mu_0\oplus\mu^0:H_0(\cH)\oplus H^0(\cH)\to K_0(C^*_r(\cH))$ and 
$\mu^1:H^1(\cH)\to K_1(C^*_r(\cH))$ are isomorphisms. 
Therefore, the middle vertical map $\mu_0\oplus\mu^1$ 
is an isomorphism. 

Similarly, 
the two diagrams \eqref{HKdiagram} and \eqref{coHKdiagram} induce 
the following commutative diagram: 
\begin{equation*}
\vcenter{
\xymatrix@M=8pt{
H_1(\cH) \ar[r]^{\id-H_1(\theta)} 
\ar[d]_{\mu_1}^-{\cong} & 
H_1(\cH) \ar[r] \ar[d]_{\mu_1}^-{\cong} & 
H_1(\cG)\oplus H^0(\cG) \ar[r] \ar[d]_-{\mu_1\oplus\mu^0} & 
\phantom{H} \\
K_1(C^*_r(\cH)) \ar[r]^{\id-K_1(\bar\theta)} & 
K_1(C^*_r(\cH)) \ar[r]^-{\iota} & 
K_1(C^*_r(\cG)) \ar[r]^-{\partial} & 
\phantom{H}
}}
\end{equation*}
\begin{equation*}
\vcenter{
\xymatrix@M=8pt{
\phantom{H} \ar[r] & 
H_0(\cH)\oplus H^0(\cH) \ar[r]^-{\Phi_0} 
\ar[d]_-{\mu_0\oplus\mu^0}^{\cong} & 
H_0(\cH)\oplus H^0(\cH) \ar[d]_-{\mu_0\oplus\mu^0}^{\cong} \\
\phantom{H} \ar[r]^-{\partial} & 
K_0(C^*_r(\cH)) \ar[r]^{\id-K_0(\bar\theta)} & 
K_0(C^*_r(\cH)). 
}}
\end{equation*}
By Theorem \ref{Z^2}, 
$\mu_1:H_1(\cH)\to K_1(C^*_r(\cH))$ and 
$\mu_0\oplus\mu^0:H_0(\cH)\oplus H^0(\cH)\to K_0(C^*_r(\cH))$ 
are isomorphisms. 
Therefore, the middle vertical map $\mu_1\oplus\mu^0$ 
is an isomorphism. 

By Poincar\'e duality, $H^i(\cG)$ is isomorphic to $H_{3-i}(\cG)$, 
and so HK holds for $\cG$. 
\end{proof}

We record here a caveat concerning the $K$-theory computations 
in \cite{MR1269302}. 
In the proof of \cite[Proposition 3]{MR1269302}, 
the kernel of the action induced by one generator 
on the coinvariants for the other generator is 
implicitly identified with the coinvariants of 
the corresponding invariant subgroup. 
Equivalently, 
the argument interchanges the invariant and coinvariant functors, 
which is not valid in general. 
As Theorem \ref{Z^2} shows, 
the natural description in dimension two is instead 
\[
K_1(C(X)\rtimes\Z^2)\cong H_1(\Z^2,C(X,\Z)), 
\]
but this does not yield the direct-sum decomposition 
used in \cite{MR1269302}. 
Thus, 
the claimed splitting of the Pimsner--Voiculescu exact sequence is not 
established by the argument given there. 
Since the three-dimensional computation in \cite[Proposition 4]{MR1269302} 
relies on this splitting, 
and the splitting asserted after (88) is likewise not justified, 
the formula stated there does not follow from the given proof. 
None of our results uses these computations. 

We next consider the class (3b), 
that is, $\Lambda=\Z^2$ and 
$K_0(\bar\theta)(\sfb_{\Z^2})=-\sfb_{\Z^2}$. 

\begin{theorem}\label{Hirsch3b}
Let $\Gamma=\Lambda\rtimes_\theta\Z$ be as above and 
let $\cG$ be the transformation groupoid 
arising from a free action of $\Gamma$ on a Cantor set $X$. 
Put $\cH:=X\rtimes\Lambda$. 
\begin{enumerate}
\item The following sequence is exact: 
\[
\xymatrix@M=8pt{
0 \ar[r] & 
H_0(\cG)\oplus\Coker(\id-H_2(\theta)) \ar[r]^-{\mu_0\oplus\nu} & 
K_0(C^*_r(\cG)) \ar[r] & 
\Ker(\id-H_1(\theta)) \ar[r] & 0, 
}
\]
where $\nu$ is a map induced by $\mu^0$ and the PV exact sequence. 
In particular, $\mu_0:H_0(\cG)\to K_0(C^*_r(\cG))$ is injective. 
\item The following sequence is split exact: 
\[
\xymatrix@M=8pt{
0 \ar[r] & 
H_1(\cG) \ar[r]^-{\mu_1} & 
K_1(C^*_r(\cG)) \ar[r] & 
H_3(\cG) \ar[r] & 0. 
}
\]
Thus, $K_1(C^*_r(\cG))$ is isomorphic to $H_1(\cG)\oplus H_3(\cG)$. 
In particular, $\mu_1:H_1(\cG)\to K_1(C^*_r(\cG))$ is injective. 
\end{enumerate}
\end{theorem}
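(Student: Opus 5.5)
The plan is to run the Pimsner--Voiculescu sequence for $C^*_r(\cG)=C^*_r(\cH)\rtimes_{\bar\theta}\Z$ and feed in the description of $K_*(C^*_r(\cH))$ from Theorem \ref{Z^2}. That theorem gives isomorphisms
\[
\mu_0\oplus\mu^0:H_0(\cH)\oplus H^0(\cH)\to K_0(C^*_r(\cH))
\qquad\text{and}\qquad
\mu_1:H_1(\cH)\to K_1(C^*_r(\cH)),
\]
where $\mu^0$ is taken with respect to $\sfb_{\Z^2}$.

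First I transport $\id-K_*(\bar\theta)$ through these isomorphisms. Naturality of $\mu_0$ and $\mu_1$ (squares (1) and (4) of Proposition \ref{HKcommute}) handles the homology summands. For the $H^0$ summand I argue as in the proof of Theorem \ref{Klein}. For $f\in C(X,\Z)^\Lambda$ one has $\bar\theta(f\otimes\sfb_{\Z^2})=(f\circ\theta^{-1})\otimes K_0(\bar\theta)(\sfb_{\Z^2})$, and $K_0(\bar\theta)(\sfb_{\Z^2})=-\sfb_{\Z^2}$. Hence $\mu^0$ intertwines $-H^0(\theta)$ with $K_0(\bar\theta)$, so $\id-K_0(\bar\theta)$ corresponds to $(\id-H_0(\theta))\oplus(\id+H^0(\theta))$.

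Next I invoke Poincar\'e duality $H^0(\cH)\cong H_2(\cH)$. Since $\det A=-1$, the action of $\theta$ reverses orientation of $\Z^2$, so this duality intertwines $H^0(\theta)$ with $-H_2(\theta)$ (the same sign mechanism as in the Klein bottle case). Consequently
\[
\Ker(\id+H^0(\theta))\cong\Ker(\id-H_2(\theta)),\qquad
\Coker(\id+H^0(\theta))\cong\Coker(\id-H_2(\theta)).
\]
I expect pinning down these signs to be the main obstacle. It requires checking carefully the naturality of the cap product with the fundamental class of $\Z^2$ under an automorphism of determinant $-1$, and checking the sign convention by which $\theta$ acts on cohomology (pullback through $\theta^{-1}$).

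For (1), the PV sequence gives a short exact sequence
\[
0\to\Coker(\id-K_0(\bar\theta))\xrightarrow{\iota}K_0(C^*_r(\cG))\xrightarrow{\partial}\Ker(\id-K_1(\bar\theta))\to0.
\]
By the above, the left term is $\Coker(\id-H_0(\theta))\oplus\Coker(\id-H_2(\theta))$ and the right term is $\Ker(\id-H_1(\theta))$. By Lemma \ref{hoLES}, $\Coker(\id-H_0(\theta))=H_0(\cG)$. Since $\cG$ is almost finite (\cite{KN26Compos,Na24JFA}), square (5) of Proposition \ref{HKcommute} shows that $\iota$ restricted to this summand is $\mu_0$. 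I define $\nu$ as $\iota\circ\mu^0$ on the remaining summand. This yields (1), and injectivity of $\mu_0$ follows.

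For (2), I compare two extensions. Lemma \ref{hoLES} gives
\[
0\to\Coker(\id-H_1(\theta))\to H_1(\cG)\xrightarrow{\delta}\Ker(\id-H_0(\theta))\to0,
\]
and the PV sequence gives
\[
0\to\Coker(\id-K_1(\bar\theta))\to K_1(C^*_r(\cG))\xrightarrow{\partial}\Ker(\id-K_0(\bar\theta))\to0.
\]
Proposition \ref{HKcommute}, in particular square (3) relating $\delta$ and $\partial$, shows that $\mu_1$ is a morphism between these extensions. On the left it restricts to an isomorphism. On the right it is the inclusion of $\Ker(\id-H_0(\theta))$ into $\Ker(\id-H_0(\theta))\oplus\Ker(\id+H^0(\theta))$. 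The snake lemma then shows that $\mu_1$ is injective with cokernel $\Ker(\id+H^0(\theta))\cong\Ker(\id-H_2(\theta))$.

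Since $H_3(\cH)=0$, Lemma \ref{hoLES} identifies $\Ker(\id-H_2(\theta))$ with $H_3(\cG)$. Finally, $\Ker(\id+H^0(\theta))$ is a subgroup of $C(X,\Z)$, which is free abelian, so it is free abelian itself. Hence the sequence splits and $K_1(C^*_r(\cG))\cong H_1(\cG)\oplus H_3(\cG)$.
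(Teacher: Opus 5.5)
Your proposal is correct and follows essentially the same route as the paper: you transport $\id-K_*(\bar\theta)$ through the isomorphisms of Theorem \ref{Z^2}, with the sign twist $\id+H^0(\theta)$ coming from $K_0(\bar\theta)(\sfb_{\Z^2})=-\sfb_{\Z^2}$, feed this into the PV sequence, and convert via Poincar\'e duality intertwining $H^0(\theta)$ with $-H_2(\theta)$. Your snake-lemma comparison of extensions for (2) makes explicit what the paper leaves as ``a similar argument'', as do your remarks that $H_3(\cH)=0$ and that subgroups of $C(X,\Z)$ are free.
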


\begin{proof}
For this class, 
we cannot make use of cohomology comparison maps for $\cG$, 
because $K_0(\bar\theta)(\sfb_{\Z^2})=-\sfb_{\Z^2}$. 
However, the same arguments as in the proofs of 
Propositions \ref{HKcommute} and \ref{coHKcommute} give 
the following sign-twisted commutative diagrams: 
\begin{equation*}
\vcenter{
\xymatrix@M=8pt{
H_1(\cH) \ar[r]^-{\id-H_1(\theta)} \ar[d]_{\mu_1} & 
H_1(\cH) \ar[d]_{\mu_1} & 
H^0(\cH) \ar[r]^-{\id+H^0(\theta)} \ar[d]_{\mu^0} & 
H^0(\cH) \ar[d]_{\mu^0} \\
K_{1}(C^*_r(\cH)) \ar[r]^-{\id-K_{1}(\bar\theta)} & 
K_{1}(C^*_r(\cH)) & 
K_{0}(C^*_r(\cH)) \ar[r]^-{\id-K_{0}(\bar\theta)} & 
K_{0}(C^*_r(\cH)) \\
}}
\end{equation*}
Moreover, 
$\mu_0\oplus\mu^0:H_0(\cH)\oplus H^0(\cH)\to K_0(C^*_r(\cH))$ 
and $\mu_1:H_1(\cH)\to K_1(C^*_r(\cH))$ are 
isomorphisms thanks to Theorem \ref{Z^2}. 

(1)\:
Combining the diagrams above 
with \eqref{HKdiagram} and \eqref{coHKdiagram}, 
we obtain the following: 
\begin{equation*}
\vcenter{
\xymatrix@M=8pt{
H_0(\cH)\oplus H^0(\cH) \ar[r]^-{\Phi_0} 
\ar[d]_{\mu_0\oplus\mu^0}^-{\cong} & 
H_0(\cH)\oplus H^0(\cH) \ar[d]_{\mu_0\oplus\mu^0}^-{\cong} & 
 & 
\phantom{H} \\
K_0(C^*_r(\cH)) \ar[r]^{\id-K_0(\bar\theta)} & 
K_0(C^*_r(\cH)) \ar[r]^-{\iota} & 
K_0(C^*_r(\cG)) \ar[r]^-{\partial} & 
\phantom{H}
}}
\end{equation*}
\begin{equation*}
\vcenter{
\xymatrix@M=8pt{
\phantom{H} & 
H_1(\cH) \ar[r]^-{\id-H_1(\theta)} \ar[d]_-{\mu_1}^{\cong} & 
H_1(\cH) \ar[d]_-{\mu_1}^{\cong} \\
\phantom{H} \ar[r]^-{\partial} & 
K_1(C^*_r(\cH)) \ar[r]^{\id-K_1(\bar\theta)} & 
K_1(C^*_r(\cH)), 
}}
\end{equation*}
where the map $\Phi_0$ is 
$(\id-H_0(\theta))\oplus(\id+H^0(\theta))$. 
Hence we have the exact sequence 
\[
\xymatrix@M=8pt{
0 \ar[r] & 
H_0(\cG)\oplus\Coker(\id+H^0(\theta)) \ar[r]^-{\mu_0\oplus{\nu}} & 
K_0(C^*_r(\cG)) \ar[r] & 
\Ker(\id-H_1(\theta)) \ar[r] & 0. 
}
\]
Since Poincar\'e duality $H^0(\cH)\cong H_2(\cH)$ intertwines 
$H^0(\theta)$ and $-H_2(\theta)$, 
this gives (1). 

(2)\:
A similar argument gives 
\[
\xymatrix@M=8pt{
0 \ar[r] & 
H_1(\cG) \ar[r]^-{\mu_1} & 
K_1(C^*_r(\cG)) \ar[r] & 
\Ker(\id+H^0(\theta)) \ar[r] & 0, 
}
\]
is exact. 
By Poincar\'e duality, 
\[
\Ker(\id+H^0(\theta))\cong\Ker(\id-H_2(\theta))
\cong H_3(\cG). 
\]
Since 
\[
\Ker(\id+H^0(\theta))
=\{f\in C(X,\Z)^\Lambda\mid f\circ\theta=-f\}
\]
is free abelian, we obtain the conclusion. 
\end{proof}

Finally, we study actions of $\Gamma$ belonging to class (3c). 
Thus, 
$\Gamma$ is a semidirect product of the Klein bottle group $\Lambda$ 
by an automorphism $\theta\in\Aut(\Lambda)$. 
When $\Lambda=\langle a,b\mid bab^{-1}=a^{-1}\rangle$, 
there exist $\ep_1,\ep_2\in\{1,-1\}$ and $k\in\Z$ 
such that $\theta(a)=a^{\ep_1}$ and $\theta(b)=b^{\ep_2}a^k$ 
(see \cite[Lemma 8.2]{IM21Invent} for instance). 

\begin{theorem}\label{Hirsch3c}
Let $\Gamma=\Lambda\rtimes_\theta\Z$ be as above and 
let $\cG$ be the transformation groupoid 
arising from a free action of $\Gamma$ on a Cantor set $X$. 
Put $\cH:=X\rtimes\Lambda$. 
\begin{enumerate}
\item The following sequence is exact: 
\[
\xymatrix@M=8pt{
0 \ar[r] & 
H_0(\cG)\oplus\Coker(\id-H_2(\theta)) \ar[r]^-{\mu_0\oplus\nu} & 
K_0(C^*_r(\cG)) \ar[r] & 
\Ker(\id-H_1(\theta)) \ar[r] & 0, 
}
\]
where $\nu$ is a map induced by $\mu^0$ and the PV exact sequence. 
In particular, 
$\mu_0:H_0(\cG)\to K_0(C^*_r(\cG))$ is injective. 
\item The following sequence is split exact: 
\[
\xymatrix@M=8pt{
0 \ar[r] & 
H_1(\cG) \ar[r]^-{\mu_1} & 
K_1(C^*_r(\cG)) \ar[r] & 
H_3(\cG) \ar[r] & 0. 
}
\]
Thus, $K_1(C^*_r(\cG))$ is isomorphic to $H_1(\cG)\oplus H_3(\cG)$. 
In particular, $\mu_1:H_1(\cG)\to K_1(C^*_r(\cG))$ is injective. 
\end{enumerate}
\end{theorem}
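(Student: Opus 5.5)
The plan mirrors the proof of Theorem \ref{Hirsch3b}, with $\cH:=X\rtimes\Lambda$ now the Klein bottle groupoid and Theorem \ref{Klein} playing the role of Theorem \ref{Z^2}. The first step is to pin down the input data for $\cH$. By Theorem \ref{Klein}, $\mu_1:H_1(\cH)\to K_1(C^*_r(\cH))$ is an isomorphism. There is also a split exact sequence $0\to H_0(\cH)\xrightarrow{\mu_0}K_0(C^*_r(\cH))\to H_2(\cH)\to0$, where
\[
H_2(\cH)\cong\Ker(\id+H^0(\theta_\Lambda))=\{f\in C(X,\Z)^{\langle a\rangle}\mid f\circ b=-f\}.
\]
Here I write $\theta_\Lambda$ for the automorphism given by $b$, and the splitting is the Bott map $1_P-1_Q\mapsto\Bott(v,u_b)$ of Remark \ref{KleinRemark}. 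Let $\nu':H_2(\cH)\to K_0(C^*_r(\cH))$ denote this splitting. Then $\mu_0\oplus\nu':H_0(\cH)\oplus H_2(\cH)\to K_0(C^*_r(\cH))$ is an isomorphism. This is the replacement for $\mu_0\oplus\mu^0$ in the (3b) argument.

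The second step is a sign-twisted equivariance for $\nu'$ under the new automorphism $\theta$ with $\theta(a)=a^{\ep_1}$, $\theta(b)=b^{\ep_2}a^k$. I would compute $\bar\theta(\Bott(v,u_b))$ directly. We have $\bar\theta(z)=z^{\ep_1}$ and $\bar\theta(u_b)=u_b^{\ep_2}u_a^k$, and $v$ commutes with $u_a$. Hence $\bar\theta(\Bott(v,u_b))=\ep_1\ep_2\Bott(v\circ\theta^{-1},u_b)$, up to an error term coming from the factor $u_a^k$. That error term is $\Bott(v,u_a^k)$, whose class lies in the image of $\mu_0$, and I expect it to vanish or be absorbed. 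On the homology side, $H_2(\theta)$ acts on $\Ker(\id+H^0(\theta_\Lambda))$ by the same sign $\ep_1\ep_2$ times pullback, via Poincar\'e duality twisted by the orientation character.

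This should give a commutative diagram of the form (HKdiagram) for $H_0\oplus H_2$, possibly block upper-triangular with the $H_0$ part. That is enough for the five-lemma style argument. The $H_1$ square commutes by Proposition \ref{HKcommute}.

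The third step assembles the PV sequence for $C^*_r(\cG)=C^*_r(\cH)\rtimes_{\bar\theta}\Z$. Since $K_1(C^*_r(\cH))\cong H_1(\cH)$ via $\mu_1$ compatibly, the PV sequence gives
\[
0\to\Coker(\id-K_0(\bar\theta))\to K_0(C^*_r(\cG))\to\Ker(\id-H_1(\theta))\to0 .
\]
The cokernel identifies with $H_0(\cG)\oplus\Coker(\id-H_2(\theta))$. Here I use exactness of Lemma \ref{hoLES}, $H_0(\cG)=\Coker(\id-H_0(\theta))$, and the triangular form, which needs a short diagram chase. This proves (1), with $\nu$ induced by $\nu'$ and hence by $\mu^0$.

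For (2), the PV sequence gives $0\to\Coker(\id-K_1(\bar\theta))\to K_1(C^*_r(\cG))\to\Ker(\id-K_0(\bar\theta))\to0$. The first term is $\Coker(\id-H_1(\theta))$, which maps injectively into $H_1(\cG)$ by Lemma \ref{hoLES}. I would show that $\mu_1(H_1(\cG))$ accounts for the $H_0$-part of the kernel. Indeed, $\Ker(\id-H_0(\theta))$ lifts through $\delta$ using Proposition \ref{HKcommute}, square (3). The remaining quotient is $\Ker(\id-H_2(\theta))$, which equals $H_3(\cG)$ by Lemma \ref{hoLES}, since $H_3(\cH)=0$. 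This gives exactness; injectivity of $\mu_1$ follows from the five lemma on the pieces.

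Splitting holds because $\Ker(\id-H_2(\theta))$ is a subgroup of $H_2(\cH)$, which is a subgroup of $C(X,\Z)$, and so it is free abelian. The main obstacle is the second step: keeping track of the signs $\ep_1,\ep_2$ and the cross term from $a^k$, and proving that the resulting correction lies in $\mu_0(H_0(\cH))$, so that the induced map on quotients is exactly $\id-H_2(\theta)$.
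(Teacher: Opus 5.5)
Your strategy is the paper's: split $K_0(C^*_r(\cH))$ by $\mu_0\oplus s$ with $s(h)=\Bott(v_h,u)$ from Remark~\ref{KleinRemark}, compute $K_0(\bar\theta)$ on Bott elements, and compare with \eqref{HKdiagram}. There is a gap, however, at the step you flag as the main obstacle: you leave it open, and your fallback would not prove the statement.

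Suppose $K_0(\bar\theta)(s(h))$ differed from $\ep_1\ep_2 s(\bar\theta(h))$ by a nonzero term $C(h)\in\mu_0(H_0(\cH))$. In the coordinates $\mu_0\oplus s$, the map $\id-K_0(\bar\theta)$ would then be $\begin{pmatrix}\id-H_0(\theta)&-C\\0&\id-H_2(\theta)\end{pmatrix}$. Two things then fail.
\begin{itemize}
\item The kernel of $H_0(\cG)=\Coker(\id-H_0(\theta))\to\Coker(\id-K_0(\bar\theta))$ would be the image of $C(\Ker(\id-H_2(\theta)))$. So neither the direct sum in (1) nor the injectivity of $\mu_0$ would follow.
\item $\Ker(\id-K_0(\bar\theta))$ would only project onto $\{h\in\Ker(\id-H_2(\theta))\mid C(h)\in\Ima(\id-H_0(\theta))\}$. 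So in (2) the cokernel of $\mu_1$ could be a proper subgroup of $H_3(\cG)$.
\end{itemize}
Knowing that the induced map on $K_0(C^*_r(\cH))/\mu_0(H_0(\cH))\cong H_2(\cH)$ is $\id-H_2(\theta)$ is therefore not enough; you need the square to be diagonal.

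The missing input is that the cross term is actually zero. This is what the paper uses when it writes $\Bott(v_{\bar\theta(h)}^{\ep_1},u^{\ep_2}z^k)=\Bott(v_{\bar\theta(h)}^{\ep_1},u^{\ep_2})$. Put $v:=v_{\bar\theta(h)}^{\ep_1}$. It commutes with $u^{\ep_2}$, since $\bar\theta(h)\in\Ker(\id+H^0(b))$, and it commutes with $z$.

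First, $\Bott(v,\cdot)$ is additive on unitaries commuting with $v$, even when these do not commute with each other, as $u$ and $z$ do not. To see this, let $D$ be the relative commutant of $v$. Then $\Bott(v,w)$ is the image, under $C(\T)\otimes D\to C^*_r(\cH)$, $f\otimes d\mapsto f(v)d$, of the exterior product of the generator of $K_1(C(\T))$ with $K_1(w)\in K_1(D)$. This is additive in $K_1(w)$, so $\Bott(v,u^{\ep_2}z^k)=\Bott(v,u^{\ep_2})+k\Bott(v,z)$.

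Second, $\Bott(v,z)=0$. Write $v=\sum_n p_n z^n$ with finitely many orthogonal projections $p_n=1_{f^{-1}(n)}$, where $f=\ep_1\bar\theta(h)$; these commute with $z$. Then $(v,z)$ is the image of the pair $\bigl((n,\zeta)\mapsto\zeta^n,\ (n,\zeta)\mapsto\zeta\bigr)$ under a homomorphism $C(F\times\T)\to C^*_r(\cH)$ with $F$ finite. Since $K_0(C(F\times\T))\cong\Z^F$ is detected by rank and the Bott class has rank zero, its image vanishes.

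With this, the square is diagonal with $\Phi_0=(\id-H_0(\theta))\oplus(\id-H_2(\theta))$. The rest of your argument then goes through as you describe: $\mu_1$ an isomorphism for $\cH$ by Theorem~\ref{Klein}(2), the comparison with \eqref{HKdiagram}, $H_3(\cH)=0$, and freeness of $\Ker(\id-H_2(\theta))$.
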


\begin{proof}
By Theorem \ref{Klein} (1) and Remark \ref{KleinRemark}, 
there exists a homomorphism 
$s:\Ker(\id+H^0(b))\to K_0(C^*_r(\cH))$ such that 
\[
\mu_0\oplus s:
H_0(\cH)\oplus\Ker(\id+H^0(b))\to K_0(C^*_r(\cH))
\]
is an isomorphism, 
where $H^0(b)$ denotes the automorphism of 
\[
H^0(X\rtimes\Z)=C(X,\Z)^\Z=\{f\in C(X,\Z)\mid a.f=f\}
\]
sending $f$ to $b.f$. 
As in Theorem \ref{Klein} and Remark \ref{KleinRemark}, 
we let $z:=\lambda_a\in C^*_r(\Lambda)$ and 
$u:=\lambda_b\in C^*_r(\Lambda)$. 
By Remark \ref{KleinRemark}, 
for $h=1_P-1_Q\in\Ker(\id+H^0(b))$, the unitary 
\[
v_h:=1_{P}\otimes z+1_{Q}\otimes z^*+(1_X-1_{P}-1_{Q})
\]
commutes with $u$, and 
$s(h)$ is given by 
the Bott element $\Bott(v_h,u)\in K_0(C^*_r(\cH))$. 
Hence 
\[
K_0(\bar\theta)(\Bott(v_h,u))
=\Bott(\bar\theta(v_h),\bar\theta(u))
=\Bott(v_{\bar\theta(h)}^{\ep_1},u^{\ep_2}z^k)
=\Bott(v_{\bar\theta(h)}^{\ep_1},u^{\ep_2})
=\ep_1\ep_2s(\bar\theta(h)). 
\]
Under the identification of $\Ker(\id+H^0(b))$ 
with $H_2(\cH)$, 
this is equal to $H_2(\theta)(h)$. 
Consequently, we obtain the following commutative diagram: 
\begin{equation*}
\vcenter{
\xymatrix@M=8pt{
H_0(\cH)\oplus H_2(\cH) \ar[r]^{\Phi_0} 
\ar[d]_{\mu_0\oplus s}^-{\cong} & 
H_0(\cH)\oplus H_2(\cH) \ar[d]_{\mu_0\oplus s}^-{\cong} \\
K_0(C^*_r(\cH)) \ar[r]^{\id-K_0(\bar\theta)} & 
K_0(C^*_r(\cH)), 
}}
\end{equation*}
where $\Phi_0=(\id-H_0(\theta))\oplus(\id-H_2(\theta))$. 
Combining this with the diagram \eqref{HKdiagram}, 
we obtain 
\[
\xymatrix@M=8pt{
0 \ar[r] & 
H_0(\cG)\oplus\Coker(\id-H_2(\theta)) \ar[r]^-{\mu_0\oplus\nu} & 
K_0(C^*_r(\cG)) \ar[r] & 
\Ker(\id-H_1(\theta)) \ar[r] & 0
}
\]
and 
\[
\xymatrix@M=8pt{
0 \ar[r] & 
H_1(\cG) \ar[r]^-{\mu_1} & 
K_1(C^*_r(\cG)) \ar[r] & 
H_3(\cG) \ar[r] & 0
}
\]
as desired. 
Here we use the identification 
$\Ker(\id-H_2(\theta))\cong H_3(\cG)$ 
coming from the homology long exact sequence, 
and the fact that 
$\mu_1:H_1(\cH)\to K_1(C^*_r(\cH))$ is an isomorphism 
by Theorem \ref{Klein} (2). 
\end{proof}

\begin{remark}
The homology long exact sequence in Lemma \ref{hoLES} implies that 
\[
\xymatrix@M=8pt{
0 \ar[r] & 
\Coker(\id-H_2(\theta)) \ar[r] & 
H_2(\cG) \ar[r] & 
\Ker(\id-H_1(\theta)) \ar[r] & 0
}
\]
is exact. 
In view of this, 
the exact sequences in Theorem \ref{Hirsch3b} and 
Theorem \ref{Hirsch3c} indicate that 
$K_0(C^*_r(\cG))$ is built from $H_0(\cG)$ and $H_2(\cG)$. 
\end{remark}

We conclude this subsection 
by discussing GL for poly-$\Z$ groups of Hirsch length three. 

\begin{theorem}\label{GL3}
Let $\Gamma:=\Lambda\rtimes_\theta\Z$ be 
a poly-$\Z$ group of Hirsch length three. 
Let $\cG$ be the transformation groupoid 
arising from a free action of $\Gamma$ on a Cantor set $X$. 
Then we have 
\[
\Ima D_\cG\subset\frac{1}{2}\Ima(D_\cG\circ\mu_0). 
\]
Thus GL holds for $\cG$ up to a factor of two. 
In particular, if $D_\cG(\mu_0(H_0(\cG)))$ is $2$-divisible, 
then GL holds for $\cG$. 
\end{theorem}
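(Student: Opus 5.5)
Take $c\in K_0(C^*_r(\cG))$ and use the PV sequence for $\cG=\cH\rtimes_\theta\Z$, $\cH=X\rtimes\Lambda$, where $\Lambda$ is $\Z^2$ or the Klein bottle group. The aim is to reduce to Proposition \ref{halfGL}. Since $\Lambda$ has Hirsch length two, Theorems \ref{Z^2} and \ref{Klein} show that $\mu_1:H_1(\cH)\to K_1(C^*_r(\cH))$ is an isomorphism. Put $h:=\mu_1^{-1}(\partial(c))$. Exactness gives $(\id-K_1(\bar\theta))(\partial(c))=0$, so naturality of $\mu_1$ (square (1) of \eqref{HKdiagram}) and injectivity of $\mu_1$ give $h=H_1(\theta)(h)$. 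Proposition \ref{halfGL} then applies, because $\cH$ is principal (the action is free) and almost finite (\cite{KN26Compos,Na24JFA}). It yields
\[
D_\cG(c)\in\frac{1}{2}\Ima(D_\cG\circ\mu_0^\cG)+\Ima(D_\cG\circ\iota).
\]

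It remains to show $\Ima(D_\cG\circ\iota)\subset\Ima(D_\cG\circ\mu_0^\cG)$. For this it suffices to prove $\Ima(D_\cH)\subset\Ima(D_\cH\circ\mu_0^\cH)$, i.e.\ GL for $\cH$, together with the compatibility of traces. Indeed, for $\nu\in M(\cG)$ the restriction $\tau_\nu|_{C^*_r(\cH)}$ is $\tau_\nu$ for $\cH$, and $\iota\circ\mu_0^\cH=\mu_0^\cG\circ(H_0(\cH)\to H_0(\cG))$ by square (5) of \eqref{HKdiagram}. Hence $D_\cG(\iota(c'))(\nu)=D_\cH(c')(\nu)$ for $c'\in K_0(C^*_r(\cH))$. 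If $D_\cH(c')=D_\cH(\mu_0^\cH(f))$ as functions on $M(\cH)$, then restricting to $M(\cG)\subset M(\cH)$ gives $D_\cG(\iota(c'))=D_\cG(\mu_0^\cG([f]))$. GL for $\cH$ is exactly the last assertion of Theorems \ref{Z^2} and \ref{Klein}. Combining, and using $\Ima(D_\cG\circ\mu_0)\subset\frac12\Ima(D_\cG\circ\mu_0)$, we get $\Ima D_\cG\subset\frac12\Ima(D_\cG\circ\mu_0)$.

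For the final sentence: if $D_\cG(\mu_0(H_0(\cG)))$ is $2$-divisible, then $\frac12\Ima(D_\cG\circ\mu_0)=\Ima(D_\cG\circ\mu_0)$, so GL holds.

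The main point to check carefully is the hypothesis of Proposition \ref{halfGL}, namely that $\partial(c)$ lies in the image of a $\theta$-fixed homology class. This works uniformly in all three classes (3a), (3b), (3c), because it only uses bijectivity of $\mu_1$ for $\cH$ and square (1) of \eqref{HKdiagram}. Square (1) is naturality and needs no sign twist. This differs from the $H_0$/$H^0$ parts, where (3b) and (3c) required sign-twisted diagrams. Since we avoid cohomology comparison maps at the level of $\cG$, no case distinction is needed. One further point to verify is that every $\nu\in M(\cG)$ restricts to an element of $M(\cH)$, which is immediate since $\cH\subset\cG$.
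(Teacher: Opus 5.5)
Your proposal is correct and follows the paper's proof: bijectivity of $\mu_1$ for $\cH$ (Theorems \ref{Z^2} and \ref{Klein}) yields a $\theta$-fixed $h$ with $\partial(c)=\mu_1(h)$, Proposition \ref{halfGL} is applied, and GL for $\cH$ absorbs the $\Ima(D_\cG\circ\iota)$ term. Your check of $\Ima(D_\cG\circ\iota)\subset\Ima(D_\cG\circ\mu_0)$, by restricting traces from $M(\cG)$ to $M(\cH)$ and using square (5), just spells out a step the paper states in one line.
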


\begin{proof}
Set $\cH:=X\rtimes\Lambda$. 
By the preceding cases, GL holds for $\cH$. 
Consider the diagram \eqref{GLdiagram}. 
In the present setting, 
$\mu_1:H_1(\cH)\to K_1(C^*_r(\cH))$ is an isomorphism. 
Hence, for any $c\in K_0(C^*_r(\cG))$, 
there exists $h\in H_1(\cH)$ 
such that $\partial(c)=\mu_1(h)$ and $h=H_1(\theta)(h)$. 
By Proposition \ref{halfGL}, we have 
$2D_\cG(c)\in\Ima(D_\cG\circ\iota)$. 
Since GL holds for $\cH$, 
the latter is contained in $\Ima(D_\cG\circ\mu_0)$. 
Hence $2D_\cG(c)\in\Ima(D_\cG\circ\mu_0)$. 
\end{proof}

\subsection{Poly-$\Z$ groups of Hirsch length four}

In this subsection, we study actions of 
a certain class of poly-$\Z$ groups of Hirsch length four. 
Let $\Lambda:=\Z^2\rtimes\Z$ be a poly-$\Z$ group of class (3a). 
Thus, $\Lambda$ arises from an action $\Z\curvearrowright\Z^2$ 
induced by a matrix $A\in GL(2,\Z)$ with $\det A=1$. 
We choose $\sfb_\Lambda\in K_1(C^*_r(\Lambda))$ as in Section 5.2. 
Let $\Gamma$ be a semidirect product of $\Lambda$ 
by an automorphism $\theta\in\Aut(\Lambda)$. 
We assume that $K_1(\bar\theta)$ fixes $\sfb_\Lambda$, 
and choose $\sfb_\Gamma\in K_0(C^*_r(\Gamma))$ so that 
the connecting map $K_0(C^*_r(\Gamma))\to K_1(C^*_r(\Lambda))$ 
sends $\sfb_\Gamma$ to $\sfb_\Lambda$. 
Such a class $\sfb_\Gamma$ exists by exactness of the 
Pimsner--Voiculescu sequence. 
We refer to this class of groups as $(4a+)$. 

\begin{remark}
When $\Lambda\cong\Z^3$, 
the automorphism $\theta\in\Aut(\Lambda)$ is determined by 
a matrix $B\in GL(3,\Z)$. 
We may choose $\sfb_\Lambda\in K_1(C^*_r(\Lambda))$ 
so that $K_1(\bar\theta)$ fixes $\sfb_\Lambda$ 
for any $\theta$ such that $\det B=1$. 
In particular, $\Gamma=\Z^4$ fits into this framework. 
\end{remark}

\begin{theorem}\label{Hirsch4a+}
Let $\Gamma=\Lambda\rtimes_\theta\Z$ be as above and 
let $\cG$ be the transformation groupoid 
arising from a free action of $\Gamma$ on a Cantor set $X$. 
Set $\cH:=X\rtimes\Lambda$. 
\begin{enumerate}
\item The following sequence is exact: 
\[
\xymatrix@M=8pt{
0 \ar[r] & 
H_0(\cG)\oplus\Coker(\id-H_2(\theta)) \ar[r]^-{\mu_0\oplus\nu} & 
K_0(C^*_r(\cG)) \ar[r] & 
\phantom{H}
}
\]
\[
\xymatrix@M=8pt{
\phantom{H} \ar[r] & 
\Ker(\id-H_1(\theta))\oplus H_4(\cG) \ar[r] & 0, 
}
\]
where $\nu$ is a map induced by $\mu^1$ and the PV exact sequence. 
In particular, $\mu_0:H_0(\cG)\to K_0(C^*_r(\cG))$ is injective. 
\item $\mu_1\oplus\mu^1:H_1(\cG)\oplus H^1(\cG)\to K_1(C^*_r(\cG))$ 
is an isomorphism. 
\end{enumerate}
\end{theorem}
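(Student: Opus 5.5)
The plan is to follow the pattern of Theorems \ref{Hirsch3a} and \ref{Hirsch3b}. We apply the Pimsner--Voiculescu sequence to $C^*_r(\cG)=C^*_r(\cH)\rtimes_{\bar\theta}\Z$ and replace the $K$-groups of $\cH$ by the homological and cohomological data supplied by Theorem \ref{Hirsch3a}. For $\cH=X\rtimes\Lambda$ with $\Lambda$ of class (3a), Theorem \ref{Hirsch3a} gives two isomorphisms:
\[
\mu_0\oplus\mu^1:H_0(\cH)\oplus H^1(\cH)\to K_0(C^*_r(\cH)),\qquad
\mu_1\oplus\mu^0:H_1(\cH)\oplus H^0(\cH)\to K_1(C^*_r(\cH)).
\]
Here $\mu^0,\mu^1$ are taken with respect to $\sfb_\Lambda\in K_1(C^*_r(\Lambda))$. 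Since the PV boundary sends $\sfb_\Gamma\in K_0(C^*_r(\Gamma))$ to $\sfb_\Lambda$, Proposition \ref{coHKcommute} applies with $i=1$. For $\cG$ it yields $\mu^0:H^0(\cG)\to K_0(C^*_r(\cG))$ and $\mu^1:H^1(\cG)\to K_1(C^*_r(\cG))$, together with the commutative diagram \eqref{coHKdiagram}. The transformation groupoid $\cG$ is almost finite, so Proposition \ref{HKcommute} gives \eqref{HKdiagram} as well.

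The first step is to check that, under these identifications, $\id-K_j(\bar\theta)$ corresponds to the diagonal maps
\[
\Phi_0:=(\id-H_0(\theta))\oplus(\id-H^1(\theta)),\qquad
\Phi_1:=(\id-H_1(\theta))\oplus(\id-H^0(\theta)),
\]
with no cross terms. This follows from the squares (1), (4) of \eqref{HKdiagram} (naturality of $\mu_j$) and the squares (2), (5) of \eqref{coHKdiagram}, which use $K_1(\bar\theta)(\sfb_\Lambda)=\sfb_\Lambda$.

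For (2), we splice together two exact pieces. The first is the homology piece $H_1(\cH)\to H_1(\cH)\to H_1(\cG)\to H_0(\cH)\to H_0(\cH)$. The second is the cohomology piece $H^0(\cH)\to H^0(\cH)\xrightarrow{\delta}H^1(\cG)\to H^1(\cH)\to H^1(\cH)$. Their direct sum is an exact row
\[
H_1(\cH)\oplus H^0(\cH)\xrightarrow{\Phi_1}H_1(\cH)\oplus H^0(\cH)\to H_1(\cG)\oplus H^1(\cG)\to H_0(\cH)\oplus H^1(\cH)\xrightarrow{\Phi_0}H_0(\cH)\oplus H^1(\cH).
\]
This row maps to the PV segment $K_1\to K_1\to K_1(C^*_r(\cG))\xrightarrow{\partial}K_0\to K_0$. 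Commutativity comes from squares (1)--(5) of \eqref{HKdiagram} and squares (3)--(5) of \eqref{coHKdiagram}. The four outer vertical maps are isomorphisms, so the five lemma shows that $\mu_1\oplus\mu^1$ is an isomorphism.

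For (1), we read off the short exact sequence
\[
0\to\Coker(\id-K_0(\bar\theta))\to K_0(C^*_r(\cG))\to\Ker(\id-K_1(\bar\theta))\to0
\]
through the identifications above, obtaining
\[
0\to \Coker(\id-H_0(\theta))\oplus\Coker(\id-H^1(\theta))\to K_0(C^*_r(\cG))\to\Ker(\id-H_1(\theta))\oplus\Ker(\id-H^0(\theta))\to0 .
\]
Since $H_{-1}(\cH)=0$, Lemma \ref{hoLES} identifies $\Coker(\id-H_0(\theta))$ with $H_0(\cG)$, and under this identification the first component of the inclusion is $\mu_0$ (square (5)). On the second summand, the inclusion is $\nu:=\iota\circ\mu^1$ on $H^1(\cH)$, which descends to the cokernel.

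The remaining step is Poincaré duality. We need $\Ker(\id-H^0(\theta))=H^0(\cG)\cong H_4(\cG)$, and we need $H^1(\cH)\cong H_2(\cH)$ to intertwine $H^1(\theta)$ with $H_2(\theta)$. This is the step I expect to be the main obstacle, because of the sign: duality for $\Lambda$ twists by the orientation character of $\theta$, and a wrong sign would turn $\id-H_2(\theta)$ into $\id+H_2(\theta)$ in the statement. I would argue that the hypothesis $K_1(\bar\theta)(\sfb_\Lambda)=\sfb_\Lambda$ says exactly that $\theta$ preserves the fundamental class of $\Lambda$. The evidence is that in class (3b), where $\sfb$ is negated, the paper obtains $\id+H^0(\theta)$ paired with $-H_2(\theta)$. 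With no sign twist, $\Coker(\id-H^1(\theta))\cong\Coker(\id-H_2(\theta))$, which gives (1); injectivity of $\mu_0$ then follows immediately.
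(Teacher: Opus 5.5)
Your proposal is correct and follows the paper's proof essentially step for step. You combine \eqref{HKdiagram} and \eqref{coHKdiagram} with the isomorphisms $\mu_0\oplus\mu^1$ and $\mu_1\oplus\mu^0$ for $\cH$ from Theorem \ref{Hirsch3a}, get (2) from the five lemma (the paper's ``odd-degree part''), and get (1) from the PV short exact sequence plus untwisted Poincar\'e duality.

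The only cosmetic difference is in identifying $\Ker(\id-H^0(\theta))=H^0(\cG)$ with $H_4(\cG)$. You do this by duality for $\Gamma$, whereas the paper dualizes over $\cH$ to get $\Ker(\id-H_3(\theta))$ and then uses the homology long exact sequence. The sign point you flag is also just asserted in the paper. The direction you need does hold, since the top-degree component of $\sfb_\Lambda$ generates $H_3(\Lambda)\cong\Z$, so $\theta$ fixing $\sfb_\Lambda$ forces it to act by $+1$ there.
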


\begin{proof}
Let $\Phi_0:=(\id-H_0(\theta))\oplus(\id-H^1(\theta))$ and 
$\Phi_1:=(\id-H_1(\theta))\oplus(\id-H^0(\theta))$. 
The two diagrams \eqref{HKdiagram} and \eqref{coHKdiagram} 
combine to give the following: 
\begin{equation*}
\vcenter{
\xymatrix@M=8pt{
H_0(\cH)\oplus H^1(\cH) \ar[r]^{\Phi_0} 
\ar[d]_{\mu_0\oplus\mu^1}^-{\cong} & 
H_0(\cH)\oplus H^1(\cH) \ar[d]_{\mu_0\oplus\mu^1}^-{\cong} & 
 & 
\phantom{H} \\
K_0(C^*_r(\cH)) \ar[r]^{\id-K_0(\bar\theta)} & 
K_0(C^*_r(\cH)) \ar[r]^-{\iota} & 
K_0(C^*_r(\cG)) \ar[r]^-{\partial} & 
\phantom{H}
}}
\end{equation*}
\begin{equation*}
\vcenter{
\xymatrix@M=8pt{
\phantom{H} & 
H_1(\cH)\oplus H^0(\cH) \ar[r]^-{\Phi_1} 
\ar[d]_-{\mu_1\oplus\mu^0}^{\cong} & 
H_1(\cH)\oplus H^0(\cH) \ar[d]_-{\mu_1\oplus\mu^0}^{\cong} \\
\phantom{H} \ar[r]^-{\partial} & 
K_1(C^*_r(\cH)) \ar[r]^{\id-K_1(\bar\theta)} & 
K_1(C^*_r(\cH)). 
}}
\end{equation*}
By Theorem \ref{Hirsch3a}, 
$\mu_0\oplus\mu^1:H_0(\cH)\oplus H^1(\cH)\to K_0(C^*_r(\cH))$ and 
$\mu_1\oplus\mu^0:H_1(\cH)\oplus H^0(\cH)\to K_1(C^*_r(\cH))$ are 
isomorphisms. 
Therefore, we obtain the following exact sequence: 
\[
\xymatrix@M=8pt{
0 \ar[r] & 
H_0(\cG)\oplus\Coker(\id-H^1(\theta)) \ar[r]^-{\mu_0\oplus\nu} & 
K_0(C^*_r(\cG)) \ar[r] & 
}
\]
\[
\xymatrix@M=8pt{
\ar[r] & 
\Ker(\id-H_1(\theta))\oplus\Ker(\id-H^0(\theta)) \ar[r] & 
0. 
}
\]
Since Poincar\'e duality $H_i(\cH)\cong H^{3-i}(\cH)$ intertwines 
$H_i(\theta)$ and $H^{3-i}(\theta)$, 
the sequence above identifies with the sequence in (1). 
Here $\Ker(\id-H_3(\theta))$ is identified with $H_4(\cG)$ 
through the homology long exact sequence. 
The assertion (2) follows from the odd-degree part of the same diagrams. 
\end{proof}

Next we investigate the orientation reversing case. 
Let $\Lambda:=\Z^2\rtimes\Z$ be a poly-$\Z$ group of class (3a). 
Thus, $\Lambda$ arises from an action $\Z\curvearrowright\Z^2$ 
induced by a matrix $A\in GL(2,\Z)$ with $\det A=1$. 
We choose $\sfb_\Lambda\in K_1(C^*_r(\Lambda))$ as in Section 5.2. 
Let $\Gamma$ be a semidirect product of $\Lambda$ 
by an automorphism $\theta\in\Aut(\Lambda)$ 
such that $K_1(\bar\theta)(\sfb_\Lambda)=-\sfb_\Lambda$. 
We refer to this class of groups as $(4a-)$. 

\begin{remark}
When $\Lambda\cong\Z^3$, 
the automorphism $\theta\in\Aut(\Lambda)$ is determined by 
a matrix $B\in GL(3,\Z)$. 
We may choose $\sfb_\Lambda\in K_1(C^*_r(\Lambda))$ 
so that $K_1(\bar\theta)(\sfb_\Lambda)=-\sfb_\Lambda$ 
for any $\theta$ such that $\det B=-1$. 
\end{remark}

\begin{theorem}\label{Hirsch4a-}
Let $\Gamma=\Lambda\rtimes_\theta\Z$ be as above and 
let $\cG$ be the transformation groupoid 
arising from a free action of $\Gamma$ on a Cantor set $X$. 
Set $\cH:=X\rtimes\Lambda$. 
\begin{enumerate}
\item The following sequence is exact: 
\[
\xymatrix@M=8pt{
0 \ar[r] & 
H_0(\cG)\oplus\Coker(\id-H_2(\theta)) \ar[r]^-{\mu_0\oplus\nu^1} & 
K_0(C^*_r(\cG)) \ar[r] & 
}
\]
\[
\xymatrix@M=8pt{
\ar[r] & 
\Ker(\id-H_1(\theta))\oplus H_4(\cG) \ar[r] & 0, 
}
\]
where $\nu^1$ is a map induced by $\mu^1$ and the PV exact sequence. 
In particular, $\mu_0:H_0(\cG)\to K_0(C^*_r(\cG))$ is injective. 
\item The following sequence is exact: 
\[
\xymatrix@M=8pt{
0 \ar[r] & 
H_1(\cG)\oplus\Coker(\id-H_3(\theta)) \ar[r]^-{\mu_1\oplus\nu^0} & 
K_1(C^*_r(\cG)) \ar[r] & 
\Ker(\id-H_2(\theta)) \ar[r] & 0, 
}
\]
where $\nu^0$ is a map induced by $\mu^0$ and the PV exact sequence. 
In particular, $\mu_1:H_1(\cG)\to K_1(C^*_r(\cG))$ is injective. 
\end{enumerate}
\end{theorem}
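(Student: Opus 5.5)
We follow the proofs of Theorems \ref{Hirsch4a+} and \ref{Hirsch3b}, using sign-twisted versions of the comparison diagrams. Set $\cH:=X\rtimes\Lambda$. Since $\Lambda$ is of class (3a), Theorem \ref{Hirsch3a} shows that
\[
\mu_0\oplus\mu^1:H_0(\cH)\oplus H^1(\cH)\to K_0(C^*_r(\cH))
\quad\text{and}\quad
\mu_1\oplus\mu^0:H_1(\cH)\oplus H^0(\cH)\to K_1(C^*_r(\cH))
\]
are isomorphisms, where $\mu^0,\mu^1$ are associated with $\sfb_\Lambda$. Because $K_1(\bar\theta)(\sfb_\Lambda)=-\sfb_\Lambda$, no $\sfb_\Gamma$ lifts $\sfb_\Lambda$, so Proposition \ref{coHKcommute} cannot be applied directly. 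We only need its squares (2) and (5), which do not involve $\sfb_\Gamma$. Redoing those computations with $K_1(\bar\theta)(\sfb_\Lambda)=-\sfb_\Lambda$ should give
\[
\mu^0\circ(\id+H^0(\theta))=(\id-K_1(\bar\theta))\circ\mu^0,\qquad
\mu^1\circ(\id+H^1(\theta))=(\id-K_0(\bar\theta))\circ\mu^1 .
\]
For $\mu^0$ this holds because $\bar\theta(f\otimes\sfb_\Lambda)=\theta(f)\otimes(-\sfb_\Lambda)$. For $\mu^1$ we reuse the $KK$-conjugation identity from square (5), now with a sign. The homology squares (1) and (4) of Proposition \ref{HKcommute} hold unchanged, since $\cG$ is almost finite.

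Next we feed the two isomorphisms into the Pimsner--Voiculescu sequence for $C^*_r(\cG)=C^*_r(\cH)\rtimes_{\bar\theta}\Z$. Put
\[
\Phi_0:=(\id-H_0(\theta))\oplus(\id+H^1(\theta)),\qquad
\Phi_1:=(\id-H_1(\theta))\oplus(\id+H^0(\theta)).
\]
The map $\id-K_0(\bar\theta)$ is conjugate to $\Phi_0$ and $\id-K_1(\bar\theta)$ is conjugate to $\Phi_1$. Exactness then gives
\[
0\to\Coker\Phi_0\to K_0(C^*_r(\cG))\to\Ker\Phi_1\to0,\qquad
0\to\Coker\Phi_1\to K_1(C^*_r(\cG))\to\Ker\Phi_0\to0.
\]
By Proposition \ref{HKcommute}, the map $\iota\circ\mu_0$ restricted to $\Coker(\id-H_0(\theta))=H_0(\cG)$ is $\mu_0^\cG$, and similarly in degree one. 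So $\mu_0^\cG$ and $\mu_1^\cG$ are injective, and the cohomological summands define $\nu^1$ and $\nu^0$.

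It remains to translate the cohomological terms into homology. We use Poincar\'e duality for the orientable $3$-dimensional group $\Lambda$, with $H^j(\cH)\cong H_{3-j}(\cH)$. Since $\theta$ reverses the orientation class (this is what $K_1(\bar\theta)(\sfb_\Lambda)=-\sfb_\Lambda$ encodes), the duality intertwines $H^j(\theta)$ with $-H_{3-j}(\theta)$. This is the same sign mechanism as in Theorems \ref{Klein} and \ref{Hirsch3b}. Hence $\id+H^1(\theta)$ corresponds to $\id-H_2(\theta)$ and $\id+H^0(\theta)$ corresponds to $\id-H_3(\theta)$. Therefore $\Coker\Phi_0$ becomes $H_0(\cG)\oplus\Coker(\id-H_2(\theta))$ and $\Ker\Phi_1$ becomes $\Ker(\id-H_1(\theta))\oplus\Ker(\id-H_3(\theta))$. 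By Lemma \ref{hoLES} and $H_4(\cH)=0$, $\Ker(\id-H_3(\theta))\cong H_4(\cG)$, which gives (1). Likewise $\Coker\Phi_1$ becomes $H_1(\cG)\oplus\Coker(\id-H_3(\theta))$. In $\Ker\Phi_0$ the term $\Ker(\id-H_0(\theta))$ vanishes, since $\id-H_0(\theta)$ is injective by the exactness $H_1(\cG)\to H_0(\cH)\to H_0(\cH)$ and $H_1(\cH\rtimes\Z)$ surjecting onto it. That leaves $\Ker(\id-H_2(\theta))$, which gives (2).

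The main obstacle is the injectivity of $\id-H_0(\theta)$ on $H_0(\cH)$ in the last step. I would obtain it from the exact sequence
\[
H_1(\cG)\to H_0(\cH)\xrightarrow{\id-H_0(\theta)}H_0(\cH),
\]
showing that the connecting map $\delta:H_1(\cG)\to H_0(\cH)$ is zero. For $K_0$ the analogous statement is automatic, since an invariant function $g$ pairs trivially with the Bott class. If $\delta$ does not vanish outright, the fallback is to rerun the Pimsner--Voiculescu diagram so that the kernel term is absorbed into the $H_1(\cG)$ summand. The other delicate point is fixing the signs in the Poincar\'e duality intertwining and in the twisted square for $\mu^1$.
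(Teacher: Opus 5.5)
Your setup agrees with the paper: the sign-twisted squares for $\mu^0,\mu^1$, the isomorphisms from Theorem \ref{Hirsch3a}, the Pimsner--Voiculescu sequences $0\to\Coker\Phi_0\to K_0(C^*_r(\cG))\to\Ker\Phi_1\to0$ and $0\to\Coker\Phi_1\to K_1(C^*_r(\cG))\to\Ker\Phi_0\to0$, and the Poincar\'e duality translation. Part (1) goes through as you describe, because $H_0(\cG)=\Coker(\id-H_0(\theta))$ exactly.

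Part (2), however, rests on the claim that $\id-H_0(\theta)$ is injective on $H_0(\cH)$. This is false in every instance of the theorem. The class $[1_X]\in H_0(\cH)$ is fixed by $H_0(\theta)$, and it is nonzero: $\Gamma$ is amenable, so $X$ carries a $\Gamma$-invariant probability measure, and integrating against it sends $[1_X]$ to $1$. Equivalently, by the formula for $\delta$ in the proof of Proposition \ref{HKcommute}, the full bisection $\cH^{(0)}\times\{1\}\subset\cH\rtimes_\theta\Z$ satisfies $\delta([1_{\cH^{(0)}\times\{1\}}])=[1_X]\neq0$. Hence the homological part of $\Coker\Phi_1$ is only $\Coker(\id-H_1(\theta))$. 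This is a proper subgroup of $H_1(\cG)$, with quotient $\Ker(\id-H_0(\theta))\neq0$, and that same group survives inside $\Ker\Phi_0$. So your identifications of $\Coker\Phi_1$ with $H_1(\cG)\oplus\Coker(\id-H_3(\theta))$ and of $\Ker\Phi_0$ with $\Ker(\id-H_2(\theta))$ are both wrong. Likewise, your argument for injectivity of $\mu_1$ (done similarly in degree one) only covers the image of $H_1(\cH)$ in $H_1(\cG)$.

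The missing ingredient is square (3) of Proposition \ref{HKcommute}, namely $\partial\circ\mu_1^{\cG}=\mu_0^{\cH}\circ\delta$, which you never use. With it, your fallback is exactly what the paper does.

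\emph{Injectivity.} Suppose $\mu_1^{\cG}(g)+\iota(\mu^0(f))=0$ with $g\in H_1(\cG)$ and $f\in H^0(\cH)$. Applying $\partial$ and using injectivity of $\mu_0^{\cH}$ gives $\delta(g)=0$, so $g$ is the image of some $h\in H_1(\cH)$. Then $\mu_1^{\cH}(h)+\mu^0(f)\in\Ima(\id-K_1(\bar\theta))$. Since $\mu_1\oplus\mu^0$ is an isomorphism intertwining $\Phi_1$ with $\id-K_1(\bar\theta)$, you get $h\in\Ima(\id-H_1(\theta))$ and $f\in\Ima(\id+H^0(\theta))$.

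\emph{Image.} If $\partial(c)=\mu_0^{\cH}(x)$ with $x\in\Ker(\id-H_0(\theta))=\Ima\delta$, pick $g$ with $\delta(g)=x$; then $c-\mu_1^{\cG}(g)\in\Ima\iota$.

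Since $\partial$ maps onto $\mu_0^{\cH}(\Ker(\id-H_0(\theta)))\oplus\mu^1(\Ker(\id+H^1(\theta)))$, this yields $0\to H_1(\cG)\oplus\Coker(\id+H^0(\theta))\to K_1(C^*_r(\cG))\to\Ker(\id+H^1(\theta))\to0$. Poincar\'e duality then turns this into (2). So the term $\Ker(\id-H_0(\theta))$ is not zero but is absorbed into $H_1(\cG)$, and this same chase is what proves injectivity of $\mu_1$.
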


\begin{proof}
The argument is the same as that for Theorem \ref{Hirsch4a+}, 
except that the cohomology comparison maps acquire a sign. 
By the proof of Proposition \ref{coHKcommute}, 
we know that $\mu^i:H^i(\cH)\to K_{i+1}(C^*_r(\cH))$ intertwines 
$H^i(\theta)$ and $-K_{i+1}(\bar\theta)$. 
Combining the two diagrams \eqref{HKdiagram} and \eqref{coHKdiagram} 
yields the following: 
\begin{equation*}
\vcenter{
\xymatrix@M=8pt{
H_0(\cH)\oplus H^1(\cH) \ar[r]^{\Phi_0} 
\ar[d]_{\mu_0\oplus\mu^1}^-{\cong} & 
H_0(\cH)\oplus H^1(\cH) \ar[d]_{\mu_0\oplus\mu^1}^-{\cong} & 
 & 
\phantom{H} \\
K_0(C^*_r(\cH)) \ar[r]^{\id-K_0(\bar\theta)} & 
K_0(C^*_r(\cH)) \ar[r]^-{\iota} & 
K_0(C^*_r(\cG)) \ar[r]^-{\partial} & 
\phantom{H}
}}
\end{equation*}
\begin{equation*}
\vcenter{
\xymatrix@M=8pt{
\phantom{H} & 
H_1(\cH)\oplus H^0(\cH) \ar[r]^-{\Phi_1} 
\ar[d]_-{\mu_1\oplus\mu^0}^{\cong} & 
H_1(\cH)\oplus H^0(\cH) \ar[d]_-{\mu_1\oplus\mu^0}^{\cong} \\
\phantom{H} \ar[r]^-{\partial} & 
K_1(C^*_r(\cH)) \ar[r]^{\id-K_1(\bar\theta)} & 
K_1(C^*_r(\cH)). 
}}
\end{equation*}
where the map $\Phi_0$ is 
$(\id-H_0(\theta))\oplus(\id+H^1(\theta))$ and 
the map $\Phi_1$ is $(\id-H_1(\theta))\oplus(\id+H^0(\theta))$. 
By Theorem \ref{Hirsch3a}, 
$\mu_0\oplus\mu^1:H_0(\cH)\oplus H^1(\cH)\to K_0(C^*_r(\cH))$ and 
$\mu_1\oplus\mu^0:H_1(\cH)\oplus H^0(\cH)\to K_1(C^*_r(\cH))$ are 
isomorphisms. 
It follows that we obtain the following exact sequence: 
\[
\xymatrix@M=8pt{
0 \ar[r] & 
H_0(\cG)\oplus\Coker(\id+H^1(\theta)) \ar[r]^-{\mu_0\oplus\nu^1} & 
K_0(C^*_r(\cG)) \ar[r] & 
}
\]
\[
\xymatrix@M=8pt{
\ar[r] & 
\Ker(\id-H_1(\theta))\oplus\Ker(\id+H^0(\theta)) \ar[r] & 
0. 
}
\]
Since Poincar\'e duality $H_i(\cH)\cong H^{3-i}(\cH)$ intertwines 
$H_i(\theta)$ and $-H^{3-i}(\theta)$, 
the sequence above identifies with the sequence in (1). 
Here $\Ker(\id-H_3(\theta))$ is identified with $H_4(\cG)$ 
through the homology long exact sequence. 

For $K_1(C^*_r(\cG))$, the same argument gives 
\[
\xymatrix@M=8pt{
0 \ar[r] & 
H_1(\cG)\oplus\Coker(\id+H^0(\theta)) \ar[r]^-{\mu_1\oplus\nu^0} & 
K_1(C^*_r(\cG)) \ar[r] & 
\Ker(\id+H^1(\theta)) \ar[r] & 0. 
}
\]
Under Poincar\'e duality, 
this identifies with the sequence in (2). 
\end{proof}

\begin{remark}
The homology long exact sequence in Lemma \ref{hoLES} implies that 
\[
\xymatrix@M=8pt{
0 \ar[r] & 
\Coker(\id-H_*(\theta)) \ar[r] & 
H_*(\cG) \ar[r] & 
\Ker(\id-H_{*-1}(\theta)) \ar[r] & 0
}
\]
is exact. 
In view of this, the exact sequence 
in Theorem \ref{Hirsch4a+} (1) and Theorem \ref{Hirsch4a-} (1) 
indicates that $K_0(C^*_r(\cG))$ is built 
from $H_0(\cG)$, $H_2(\cG)$ and $H_4(\cG)$. 
Moreover $H_4(\cG)$ is free, and so 
it is a direct summand of $K_0(C^*_r(\cG))$. 
In addition, 
the exact sequence in Theorem \ref{Hirsch4a-} (2) indicates that 
$K_1(C^*_r(\cG))$ is built from $H_1(\cG)$ and $H_3(\cG)$. 
\end{remark}

In the remainder of this subsection, 
we discuss GL for poly-$\Z$ groups of Hirsch length four. 
Let $\Lambda$ be a poly-$\Z$ group of Hirsch length three 
and let $\theta\in\Aut(\Lambda)$ be an automorphism. 
Set $\Gamma:=\Lambda\rtimes_\theta\Z$. 

\begin{theorem}\label{GL4}
Assume that one of the following conditions holds. 
\begin{enumerate}
\item $\Lambda$ belongs to class $(3a)$ and 
$\theta\in\Aut(\Lambda)$ is the identity. 
\item $\Gamma$ belongs to class $(4a-)$, i.e. 
$\Lambda$ belongs to class $(3a)$ and 
$K_1(\bar\theta)(\sfb_\Lambda)=-\sfb_\Lambda$. 
\end{enumerate}
When $\cG:=X\rtimes\Gamma$ is a transformation groupoid 
of a free action of $\Gamma$ on a Cantor set $X$, 
one has 
\[
\Ima D_\cG\subset\frac{1}{2}\Ima(D_\cG\circ\mu_0). 
\]
Thus GL holds for $\cG$ up to a factor of two. 
In particular, if $D_\cG(\mu_0(H_0(\cG)))$ is $2$-divisible, 
then GL holds for $\cG$. 
\end{theorem}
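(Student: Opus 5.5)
The plan is to follow the proof of Theorem \ref{GL3}. The new difficulty is that $K_1(C^*_r(\cH))$ now contains a cohomological summand, and that summand has to be handled separately.

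Set $\cH:=X\rtimes\Lambda$ and take $c\in K_0(C^*_r(\cG))$. Since $\Lambda$ is of class (3a), Theorem \ref{Hirsch3a} says that $\mu_1\oplus\mu^0:H_1(\cH)\oplus H^0(\cH)\to K_1(C^*_r(\cH))$ is an isomorphism, where $\mu^0$ is associated with $\sfb_\Lambda$. So I write $\partial(c)=\mu_1(h)+\mu^0(f)$. By exactness of the Pimsner--Voiculescu sequence, $\partial(c)$ lies in $\Ker(\id-K_1(\bar\theta))$. The square for $\mu_1$ in \eqref{HKdiagram} commutes, and the square for $\mu^0$ is the (possibly sign-twisted) square from the proof of Proposition \ref{coHKcommute}. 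Hence $\mu_1\oplus\mu^0$ intertwines $K_1(\bar\theta)$ with $H_1(\theta)\oplus(\pm H^0(\theta))$, the sign being $+$ in case (1) and $-$ in case (2). It follows that $h=H_1(\theta)(h)$, and that $f\circ\theta=f$ in case (1) while $f\circ\theta=-f$ in case (2).

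The next step is to produce $c_f\in K_0(C^*_r(\cG))$ with $\partial(c_f)=\mu^0(f)$ and $D_\cG(c_f)\in\frac12\Ima(D_\cG\circ\mu_0)+\Ima(D_\cG\circ\iota)$. By the trace formula for the Pimsner--Voiculescu boundary map used in Proposition \ref{halfGL}, this amounts to finding a unitary $v$ representing $\mu^0(f)$ such that $\Delta(v\bar\theta(v)^*)$ is controlled.

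In case (1), $\theta=\id$, so $v\bar\theta(v)^*=1$ for any representative $v$ and the determinant term vanishes.

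In case (2), I imitate Theorem \ref{Klein}. Decompose $f=\sum_j(1_{P_j}-1_{Q_j})$ with $\Lambda$-invariant clopen sets satisfying $\theta(P_j)=Q_j$ and $\theta(Q_j)=P_j$. Choose a unitary $w$ over $C^*_r(\Lambda)$ representing $\sfb_\Lambda$, and set
\[
v_j:=1_{P_j}\otimes w+1_{Q_j}\otimes\bar\theta(w)+(1-1_{P_j}-1_{Q_j}).
\]
Then $v_j\bar\theta(v_j)^*=1_{P_j}\otimes w\bar\theta^2(w)^*+(1-1_{P_j})$, which is an elementary tensor of a $\Lambda$-invariant projection with a unitary $y=w\bar\theta^2(w)^*$ over $C^*_r(\Lambda)$ that is trivial in $K_1$. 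Its determinant can be computed on a path in $C^*_r(\Lambda)$. Since $\Lambda$ is torsion-free amenable, the canonical trace takes integer values on $K_0(C^*_r(\Lambda))$. Therefore $d$ of such a path, evaluated at $\tau_\nu$, equals $\nu(P_j)$ times a fixed real number modulo $\Z\nu(P_j)$. The main obstacle lies here: I must show that this real number can be taken to be zero, or at least a half-integer. The first thing I would try is to replace $w$ by a representative with $\bar\theta^2(w)=w$ exactly. In the $\Z^3$ picture this means a Bott-type product unitary built from commuting generators, on which $\theta^2$ acts with determinant one; in general it means using the Bott element description of $\sfb_\Lambda$ analogous to Remark \ref{KleinRemark}. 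If exactness fails, I would fall back on the transposition argument of the lemma preceding Proposition \ref{halfGL}.

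Finally, $c-c_f$ satisfies $\partial(c-c_f)=\mu_1(h)$ with $h$ fixed by $H_1(\theta)$. Because $\Lambda$ is amenable poly-$\Z$ and the action is free, $\cH$ is principal and almost finite, so Proposition \ref{halfGL} applies and gives
\[
D_\cG(c-c_f)\in\tfrac12\Ima(D_\cG\circ\mu_0)+\Ima(D_\cG\circ\iota).
\]
By Theorem \ref{GL3} applied to $\cH$, $\Ima D_\cH\subset\frac12\Ima(D_\cH\circ\mu_0)$. Since $\mu_0$ is compatible with $H_0(\cH)\to H_0(\cG)$ and $\iota$, this gives $\Ima(D_\cG\circ\iota)\subset\frac12\Ima(D_\cG\circ\mu_0)$. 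The determinant terms from the previous step lie in $\frac12\Ima(D_\cG\circ\mu_0)$ by the same compatibility. Because $\frac12\Ima+\frac12\Ima=\frac12\Ima$, adding everything up yields $D_\cG(c)\in\frac12\Ima(D_\cG\circ\mu_0)$.
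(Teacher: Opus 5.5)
Your outline follows the paper's proof. You decompose $\partial(c)=\mu_1(h_1)+\mu^0(h_0)$ via Theorem \ref{Hirsch3a}, handle the $\mu_1$-part by Proposition \ref{halfGL}, handle the $\mu^0$-part by the trace formula for the Pimsner--Voiculescu boundary map, and absorb $\Ima(D_\cG\circ\iota)$ using Theorem \ref{GL3} for $\cH$.

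\textbf{The gap is in case (2).} That is the one place where this theorem goes beyond Theorem \ref{GL3}, and you do not complete it. With your representative $v_j=1_{P_j}\otimes w+1_{Q_j}\otimes\bar\theta(w)+(1-1_{P_j}-1_{Q_j})$, the whole determinant sits on $P_j$. You are left with $\nu(P_j)\,\Delta_{\tau_\Lambda}(w\bar\theta^2(w)^*)$, and you say yourself that you cannot show it vanishes or is a half-integer. Neither proposed remedy works as stated:
\begin{itemize}
\item A representative with $\bar\theta^2(w)=w$ is exactly the kind of choice the paper cannot make in general. See the remark following the theorem, where the analogous question for $\Delta(u\bar\theta(u^*))$ is left open.
\item The transposition lemma applies only to unitaries $1_U$ with $U$ a full bisection of trivial index. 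The unitary $1_{P_j}\otimes w\bar\theta^2(w)^*+(1-1_{P_j})$ is not of that form.
\end{itemize}

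\textbf{The missing idea is to put $w^*$, not $\bar\theta(w)$, on $Q_j$.} Then
\[
v_j\bar\theta(v_j^*)=1_{P_j}\otimes w\bar\theta(w)+1_{Q_j}\otimes w^*\bar\theta(w^*)+(1-1_{P_j}-1_{Q_j}),
\]
and $w^*\bar\theta(w^*)=w^*(w\bar\theta(w))^*w$. Since $K_1(\bar\theta)(\sfb_\Lambda)=-\sfb_\Lambda$, after stabilizing there is a path $\gamma$ from $1$ to $w\bar\theta(w)$. Use the path $1_{P_j}\otimes\gamma(t)+1_{Q_j}\otimes w^*\gamma(t)^*w+(1-1_{P_j}-1_{Q_j})$. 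Its determinant at $\tau_\nu$ is $(\nu(P_j)-\nu(Q_j))\,d_\gamma(\tau_\Lambda)=0$, because $\nu$ is $\Gamma$-invariant and $\theta(P_j)=Q_j$. This cancellation between $P_j$ and $Q_j$ is the heart of the paper's argument.

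Your own representative can also be rescued, but it needs an argument you do not give. Put $z:=w\bar\theta(w)$, which lies in $U_0$ after stabilizing. Writing $[c,b]:=cbc^*b^*$, one has
\[
z\bar\theta(z)^*=w\bar\theta^2(w)^*\,[\bar\theta^2(w),\bar\theta(w)].
\]
Commutators of unitaries have trivial determinant, and $\Delta_{\tau_\Lambda}(\bar\theta(z))=\Delta_{\tau_\Lambda}(z)$ because $\tau_\Lambda\circ\bar\theta=\tau_\Lambda$. Hence $\Delta_{\tau_\Lambda}(w\bar\theta^2(w)^*)=0$ modulo $\tau_\Lambda(K_0(C^*_r(\Lambda)))$, and that ambiguity contributes only to $\Ima(D_\cG\circ\iota)$.

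\textbf{A smaller slip in case (1).} The claim that $v\bar\theta(v)^*=1$ ``for any representative $v$'' is false. Even when $\theta=\id$ on $\Lambda$, $\bar\theta$ still moves $C(X)\subset C^*_r(\cH)$ by the $\Z$-generator. You should write $h_0$ as a combination of $1_P$ with $P$ $\Gamma$-invariant and take $v=1_P\otimes u+(1-1_P)$, which $\bar\theta$ fixes. This is how the paper does it.
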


\begin{proof}
Set $\cH:=X\rtimes\Lambda$. 
We first prove (2). 
By Theorem \ref{Hirsch3a}, 
$\mu_1\oplus\mu^0:H_1(\cH)\oplus H^0(\cH)\to K_1(C^*_r(\cH))$ 
is an isomorphism. 
Consider the commutative diagram 
\begin{equation}\label{GLdiagram2}
\vcenter{
\xymatrix@M=8pt{
&&
H_1(\cH)\oplus H^0(\cH) \ar[r]^-{\Phi_1} 
\ar[d]_-{\mu_1\oplus\mu^0}^-{\cong} & 
H_1(\cH)\oplus H^0(\cH) \ar[d]_-{\mu_1\oplus\mu^0}^-{\cong} \\
K_0(C^*_r(\cH)) \ar[r]^-{\iota} & 
K_0(C^*_r(\cG)) \ar[r]^-{\partial} & 
K_1(C^*_r(\cH)) \ar[r]^-{\id-K_1(\bar\theta)} & 
K_1(C^*_r(\cH)), 
}}
\end{equation}
where the map $\Phi_1$ is 
$(\id-H_1(\theta))\oplus(\id+H^0(\theta))$. 
Theorem \ref{GL3} tells us that 
\[
\Ima D_{\cH}\subset\frac{1}{2}\Ima(D_\cH\circ\mu_0). 
\]
Let $c\in K_0(C^*_r(\cG))$. 
Since $\partial(c)$ belongs to $\Ker(\id-K_1(\bar\theta))$, 
the diagram above and the isomorphism 
$\mu_1\oplus\mu^0$ give elements $h_1\in H_1(\cH)$ and 
$h_0\in H^0(\cH)$ such that
\[
\partial(c)=\mu_1(h_1)+\mu^0(h_0),\quad 
h_1=H_1(\theta)(h_1),\quad 
h_0=-H^0(\theta)(h_0). 
\]
By exactness, the two summands have lifts to 
$K_0(C^*_r(\cG))$, say $c_1$ for $\mu_1(h_1)$ and 
$c_0$ for $\mu^0(h_0)$, and 
the remaining difference lies in $\Ima\iota$. 
By Proposition \ref{halfGL}, 
\[
D_\cG(c_1)\in\frac{1}{2}\Ima(D_\cG\circ\mu_0)+\Ima(D_\cG\circ\iota)
\subset\frac{1}{2}\Ima(D_\cG\circ\mu_0). 
\]
The element $h_0$ may be regarded as 
a $\Lambda$-invariant function on $X$ 
satisfying $h_0\circ\theta=-h_0$. 
There exist $\Lambda$-invariant clopen subsets $P_1,P_2,\dots,P_m$ 
and $Q_1,Q_2,\dots,Q_m$ of $X$ such that 
$P_j\cap Q_j=\emptyset$, $\theta(P_j)=Q_j$, $\theta(Q_j)=P_j$ 
and 
\[
h_0=\sum_{j=1}^m1_{P_j}-\sum_{j=1}^m1_{Q_j}. 
\]
Choose a unitary $u\in M_n(C^*_r(\Lambda))$ 
representing $\sfb_\Lambda\in K_1(C^*_r(\Lambda))$. 
After stabilizing if necessary, 
we may assume that $u\bar\theta(u)$ is homotopic 
to the identity in $M_n(C^*_r(\Lambda))$. 
Let $\gamma:[0,1]\to U(M_n(C^*_r(\Lambda)))$ be 
a piecewise smooth path from $1$ to $u\bar\theta(u)$. 
For each $j$, set 
\[
v_j:=1_{P_j}\otimes u+1_{Q_j}\otimes u^*+(1_X-1_{P_j}-1_{Q_j})
\in C(X)^\Lambda\otimes M_n(C^*_r(\Lambda))\subset C^*_r(\cH). 
\]
Then $\mu^0(h_0)=\sum_jK_1(v_j)$, and 
\[
v_j\bar\theta(v_j^*)=1_{P_j}\otimes u\bar\theta(u)
+1_{Q_j}\otimes u^*\bar\theta(u^*)+(1_X-1_{P_j}-1_{Q_j}). 
\]
Define $\gamma_j:[0,1]
\to U(C(X)^\Lambda\otimes M_n(C^*_r(\Lambda)))$ by 
\[
\gamma_j(t)
:=1_{P_j}\otimes\gamma(t)
+1_{Q_j}\otimes u^*\gamma(t)^*u+(1_X-1_{P_j}-1_{Q_j}). 
\]
The determinant along this path vanishes on traces from $M(\cG)$, 
that is, $d_{\gamma_j}(\tau_\nu|_{C^*_r(\cH)})=0$ for any $\nu\in M(\cG)$. 
Indeed, since $\nu$ is $\Gamma$-invariant and $\theta(P_j)=Q_j$, 
we have $\nu(P_j)=\nu(Q_j)$, and 
the two summands in the definition of $\gamma_j$ contribute 
with opposite signs. 
By the trace formula for the Pimsner--Voiculescu boundary map, 
the determinant term associated with $v_j\bar\theta(v_j^*)$ is 
represented by $d_{\gamma_j}$, 
which vanishes on all traces coming from $M(\cG)$. 
Hence the corresponding contribution to $D_\cG(c_0)$ 
lies in $\Ima(D_\cG\circ\iota)$. 
This proves (2). 

The proof of (1) is largely the same as that of (2). 
The difference lies in the handling of $h_0$. 
By assumption, 
the map $\Phi_1$ in the diagram \eqref{GLdiagram2} becomes 
$(\id-H_1(\theta))\oplus(\id-H^0(\theta))$, 
and $\partial(c)$ is decomposed as 
\[
\partial(c)=\mu_1(h_1)+\mu^0(h_0),\quad 
h_1=H_1(\theta)(h_1),\quad 
h_0=H^0(\theta)(h_0). 
\]
The element $h_0$ may be regarded as 
a $\Gamma$-invariant function on $X$. 
Since such a function is a finite integral linear combination
of indicator functions of $\Gamma$-invariant clopen subsets, 
it suffices to consider 
a single $\Gamma$-invariant clopen subset $P$ and assume $h_0=1_P$. 
Choose a unitary $u\in M_n(C^*_r(\Lambda))$ 
representing $\sfb_\Lambda\in K_1(C^*_r(\Lambda))$. 
In case (1), $\theta=\id$, and hence 
we have $u=\bar\theta(u)$. 
Put 
\[
v:=1_{P}\otimes u+(1_X-1_{P})
\in C(X)^\Lambda\otimes M_n(C^*_r(\Lambda))\subset C^*_r(\cH). 
\]
Then $\mu^0(1_P)=K_1(v)$, and $v\bar\theta(v^*)=1$. 
Therefore the determinant term is zero 
on all traces coming from $M(\cG)$. 
Hence the corresponding contribution to $D_\cG(c_0)$ 
lies in $\Ima(D_\cG\circ\iota)$. 
Since Theorem \ref{GL3} applied to $\cH$ gives 
\[
\Ima(D_\cG\circ\iota)\subset\frac{1}{2}\Ima(D_\cG\circ\mu_0), 
\]
we obtain the desired inclusion.
\end{proof}

\begin{remark}
The group $\Z^4$ is covered by Theorem \ref{GL4} (1). 
It is not clear 
whether the hypothesis $\theta=\id$ can be weakened 
to $K_1(\bar\theta)(\sfb_\Lambda)=\sfb_\Lambda$. 
The argument above would still apply 
if one could choose a unitary $u\in M_n(C^*_r(\Lambda))$ 
representing $\sfb_\Lambda$ 
such that the de la Harpe--Skandalis determinant of 
$u\bar\theta(u^*)$ vanishes, 
at least after evaluation on the traces arising from $M(\cG)$. 
We do not know when such a choice is possible. 
\end{remark}

\subsection{Poly-$\Z$ groups of Hirsch length five}

For Hirsch length five, our method currently gives only 
limited information. 
The main obstruction is our limited control of 
poly-$\Z$ groups of Hirsch length four. 
We therefore restrict attention to the following special case 
and prove only the injectivity of $\mu_1$. 

Let $\Lambda$ be a poly-$\Z$ group of class $(4a+)$ 
treated in Section 5.3. 
We choose $\sfb_\Lambda\in K_0(C^*_r(\Lambda))$ as in Section 5.3. 
Let $\Gamma$ be a semidirect product of $\Lambda$ 
by an automorphism $\theta\in\Aut(\Lambda)$. 
We assume that $K_0(\bar\theta)$ fixes $\sfb_\Lambda$, 
and choose $\sfb_\Gamma\in K_1(C^*_r(\Gamma))$ so that 
the connecting map $K_1(C^*_r(\Gamma))\to K_0(C^*_r(\Lambda))$ 
sends $\sfb_\Gamma$ to $\sfb_\Lambda$. 

\begin{remark}
When $\Lambda\cong\Z^4$, 
the automorphism $\theta\in\Aut(\Lambda)$ is determined by 
a matrix $B\in GL(4,\Z)$. 
We may choose $\sfb_\Lambda\in K_0(C^*_r(\Lambda))$ 
so that $K_0(\bar\theta)$ fixes $\sfb_\Lambda$ 
for any $\theta$ such that $\det B=1$. 
In particular, $\Gamma=\Z^5$ fits into this framework. 
\end{remark}

\begin{theorem}\label{Hirsch5a++}
Let $\Gamma=\Lambda\rtimes_\theta\Z$ be as above and 
let $\cG$ be the transformation groupoid 
arising from a free action of $\Gamma$ on a Cantor set $X$. 
Then $\mu_1:H_1(\cG)\to K_1(C^*_r(\cG))$ is injective. 
\end{theorem}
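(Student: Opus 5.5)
We follow the pattern of the proofs of Theorems \ref{Hirsch3b} and \ref{Hirsch4a+}, applying the homology and cohomology comparison diagrams to $\cG=\cH\rtimes_\theta\Z$ with $\cH:=X\rtimes\Lambda$. By Theorem \ref{Hirsch4a+} (2), $\mu_1\oplus\mu^1:H_1(\cH)\oplus H^1(\cH)\to K_1(C^*_r(\cH))$ is an isomorphism. Since $K_0(\bar\theta)$ fixes $\sfb_\Lambda$ and $\sfb_\Gamma$ maps to $\sfb_\Lambda$, Proposition \ref{coHKcommute} applies. Together with Proposition \ref{HKcommute} (the groupoids are almost finite), the map
\[
\mu_1\oplus\mu^1:H_1(\cH)\oplus H^1(\cH)\to K_1(C^*_r(\cH))
\]
intertwines $\Phi:=(\id-H_1(\theta))\oplus(\id-H^1(\theta))$ with $\id-K_1(\bar\theta)$.

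On the $K_0$ side we only know, by Theorem \ref{Hirsch4a+} (1), that $\mu_0$ together with $\nu$ is injective on $H_0(\cH)\oplus\Coker(\id-H_2(\theta_\Lambda))$, not that $K_0(C^*_r(\cH))$ is isomorphic to a direct sum of homological pieces. This is why only injectivity of $\mu_1$ is claimed. The plan is to argue directly.

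Take $x\in H_1(\cG)$ with $\mu_1(x)=0$. By Lemma \ref{hoLES}, $H_1(\cG)$ sits in an exact sequence
\[
H_1(\cH)\xrightarrow{\id-H_1(\theta)}H_1(\cH)\to H_1(\cG)\xrightarrow{\delta}H_0(\cH)\xrightarrow{\id-H_0(\theta)}H_0(\cH).
\]
The argument then has two steps.

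\textbf{Step 1: reduce to $\delta(x)=0$.} By square (3) of \eqref{HKdiagram}, $\mu_0(\delta(x))=\partial(\mu_1(x))=0$. Since $\mu_0:H_0(\cH)\to K_0(C^*_r(\cH))$ is injective by Theorem \ref{Hirsch4a+} (1), it follows that $\delta(x)=0$. Hence $x$ is the image of some $y\in H_1(\cH)$.

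\textbf{Step 2: show $y$ lies in the image of $\id-H_1(\theta)$.} By square (2), $\iota(\mu_1(y))=\mu_1(x)=0$. PV exactness then gives $\mu_1(y)=(\id-K_1(\bar\theta))(k)$ for some $k\in K_1(C^*_r(\cH))$. Write $k=\mu_1(a)+\mu^1(b)$ using the isomorphism above. By the intertwining,
\[
\mu_1(y)=\mu_1((\id-H_1(\theta))a)+\mu^1((\id-H^1(\theta))b).
\]
Injectivity of $\mu_1\oplus\mu^1$ forces $y=(\id-H_1(\theta))a$ and $(\id-H^1(\theta))b=0$. Therefore $x=0$.

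The main point to check is the intertwining for the $\mu^1$ component, namely square (5) of Proposition \ref{coHKcommute} in degree $i=0$. It requires $K_0(\bar\theta)\sfb_\Lambda=\sfb_\Lambda$, which holds by hypothesis. We also need that the two components do not mix under $\id-K_1(\bar\theta)$. This follows because both $\mu_1$ and $\mu^1$ are natural with respect to $\theta$, so their images are each carried into themselves.
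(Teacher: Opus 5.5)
Your proposal is correct and is essentially the paper's own argument. You first use square (3) of \eqref{HKdiagram} and the injectivity of $\mu_0$ on $H_0(\cH)$ to lift $x$ to $H_1(\cH)$. You then use the commutative square in which $\mu_1\oplus\mu^1$ (an isomorphism by Theorem \ref{Hirsch4a+} (2)) intertwines $(\id-H_1(\theta))\oplus(\id-H^1(\theta))$ with $\id-K_1(\bar\theta)$, which puts the lift in $\Ima(\id-H_1(\theta))$ — exactly as the paper does, with the diagram chase written out in more detail.
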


\begin{proof}
Set $\cH:=X\rtimes\Lambda$. 
Suppose that $g\in H_1(\cG)$ is in the kernel of $\mu_1$. 
In the diagram \eqref{HKdiagram}, 
$\mu_0:H_0(\cH)\to K_0(C^*_r(\cH))$ is injective and 
$\mu_1:H_1(\cH)\to K_1(C^*_r(\cH))$ is injective 
by Theorem \ref{Hirsch4a+}. 
A diagram chase in \eqref{HKdiagram} gives 
$h\in H_1(\cH)$ whose image in $H_1(\cG)$ is $g$ 
and $k\in K_1(C^*_r(\cH))$ such that 
$(\id-K_1(\bar\theta))(k)=\mu_1(h)$. 
On the other hand, 
Propositions \ref{HKcommute} and \ref{coHKcommute} give the diagram 
\begin{equation*}
\vcenter{
\xymatrix@M=8pt{
H_1(\cH)\oplus H^1(\cH) \ar[r]^-{\Phi_1} 
\ar[d]_{\mu_1\oplus\mu^1}^-{\cong} & 
H_1(\cH)\oplus H^1(\cH) \ar[d]_{\mu_1\oplus\mu^1}^-{\cong} \\
K_{1}(C^*_r(\cH)) \ar[r]^-{\id-K_{1}(\bar\theta)} & 
K_{1}(C^*_r(\cH)), 
}}
\end{equation*}
where $\Phi_1:=(\id-H_1(\theta))\oplus(\id-H^1(\theta))$, 
is commutative, 
and the vertical map $\mu_1\oplus\mu^1$ is an isomorphism 
thanks to Theorem \ref{Hirsch4a+} (2). 
Hence $h$ is in the image of $\id-H_1(\theta)$, 
and therefore $g=0$ by exactness of the homology sequence. 
\end{proof}

\section{Cohomology comparison maps for mapping tori}

In this section, 
we introduce cohomology comparison maps 
for mapping tori arising from actions of $\Z^N$. 
We let $e_1,e_2,\dots,e_N$ denote 
the canonical generators of $\Z^N$. 
For an action of $\Z^N$ on a Cantor set $X$, 
its mapping torus $\cM(\Z^N\curvearrowright X)$ is defined as 
\[
\cM(\Z^N\curvearrowright X):=
\{f\in C(\R^N\times X)\mid f(t{+}e_i,e_i.x)=f(t,x)
\quad\forall (t,x)\in\R^N\times X,\ 1\leq i\leq N\}. 
\]
We identify $\cM(\Z^N\curvearrowright X)$ 
as a subalgebra of $C([0,1]^N,C(X))$. 
It is standard that 
$K_i(\cM(\Z^N\curvearrowright X))$ is isomorphic to 
$K_{i+N}(C(X)\rtimes\Z^N)$. 

Let $\cG$ be the transformation groupoid of 
$\Z^N\curvearrowright X$. 
We define the cohomology comparison maps 
$\tilde\mu^i:H^i(\cG)\to K_i(\cM(\Z^N\curvearrowright X))$ 
for mapping tori as follows. 
This is essentially the construction described in the proof of 
\cite[Theorem 2]{BKL01CRASParis}, 
but we explain it for completeness. 
There exists a canonical map 
$H^0(\cG)\cong C(X,\Z)^{\Z^N}\to K_0(C(X)^{\Z^N})$. 
Since $C(X)^{\Z^N}$ can be identified 
with a subalgebra of $\cM(\Z^N\curvearrowright X)$, 
we have a homomorphism 
from $K_0(C(X)^{\Z^N})$ to $K_0(\cM(\Z^N\curvearrowright X))$. 
Let $\tilde\mu^0:H^0(\cG)\to K_0(\cM(\Z^N\curvearrowright X))$ 
be the composition of these homomorphisms. 

We next define 
$\tilde\mu^1:H^1(\cG)\to K_1(\cM(\Z^N\curvearrowright X))$. 
Take $\xi\in\Hom(\cG,\Z)$. 
For each $j=1,2,\dots,N$, 
set $f_j:=\xi(\cdot,e_j)\in C(X,\Z)$. 
The cocycle identity gives $d_j(f_k)=d_k(f_j)$ for any $j,k$, 
where $d_j(f):=e_j.f-f$. 
Define a unitary $u_\xi\in C([0,1]^N,C(X))$ by 
\[
u_\xi(t_1,t_2,\dots,t_N):=
\exp2\pi i\left(\sum_{\emptyset\neq J\subset\{1,2,\dots,N\}}
f_J\prod_{j\in J}t_j\right)
\]
with $f_{\{j_1,j_2,\dots,j_k\}}:=
(d_{j_1}\circ\dots\circ d_{j_{k-1}})(f_{j_k})$ 
(this does not depend on the order of $j_1,j_2,\dots,j_k$). 
The defining relations imply that 
$u_\xi$ is a unitary in $\cM(\Z^N\curvearrowright X)$, 
and we set 
\[
\tilde\mu^1([\xi]):=K_1(u_\xi)
\in K_1(\cM(\Z^N\curvearrowright X)). 
\]
One checks, as in \cite[Theorem 2]{BKL01CRASParis}, that 
the assignment $\xi\mapsto K_1(u_\xi)$ is additive and 
vanishes on coboundaries. 
Hence it descends to $H^1(\cG)$. 

Consider the subgroup $\Z^{N-1}\subset\Z^N$ 
generated by $e_1,e_2,\dots,e_{N-1}$, 
and let $\cH:=X\rtimes\Z^{N-1}$ be 
the associated transformation groupoid. 
The last generator $e_N$ induces an automorphism $\theta$ of $\cH$ 
so that $\cG=\cH\rtimes_\theta\Z$. 
We also define 
$\bar\theta\in\Aut(\cM(\Z^{N-1}\curvearrowright X))$ by 
\[
\bar\theta(f)(t_1,t_2,\dots,t_{N-1}):=
e_N.(f(t_1,t_2,\dots,t_{N-1})). 
\]
Then the mapping torus of $\bar\theta$ is canonically 
isomorphic to $\cM(\Z^{N}\curvearrowright X)$. 

Consider the following diagram: 
\begin{equation*}
\vcenter{
\xymatrix@M=8pt{
0 \ar[r] & 
H^0(\cG) \ar[r] \ar[d]_{\tilde\mu^0} \ar@{}[dr]|-{(1)} & 
H^0(\cH) \ar[r]^-{\id-H^0(\theta)} \ar[d]_{\tilde\mu^0} 
\ar@{}[dr]|-{(2)} & 
H^0(\cH) \ar[r]^-{\delta} \ar[d]_{\tilde\mu^0} \ar@{}[dr]|-{(3)} & 
\phantom{H^1(\cG)} \\
\ar[r] & 
K_{0}(\cM(\Z^N)) \ar[r]  & 
K_{0}(\cM(\Z^{N-1})) \ar[r]^-{\id-K_{0}(\bar\theta)} & 
K_{0}(\cM(\Z^{N-1})) \ar[r]^-{\partial} & \phantom{H^1(\cG)}
}}
\end{equation*}
\begin{equation}\label{coMTdiagram}
\vcenter{
\xymatrix@M=8pt{
\phantom{H} \ar[r]^-{\delta} \ar@{}[dr]|-{(3)} & 
H^1(\cG) \ar[r] \ar[d]_{\tilde\mu^1} \ar@{}[dr]|-{(4)} &
H^1(\cH) \ar[r]^-{\id-H^1(\theta)} \ar[d]_{\tilde\mu^1} 
\ar@{}[dr]|-{(5)} & 
H^1(\cH) \ar[d]_{\tilde\mu^1} \ar[r] & \\
\phantom{H} \ar[r]^-{\partial} & 
K_{1}(\cM(\Z^N)) \ar[r] & 
K_{1}(\cM(\Z^{N-1})) \ar[r]^-{\id-K_{1}(\bar\theta)} & 
K_{1}(\cM(\Z^{N-1})) \ar[r] & 
}}
\end{equation}
where the top horizontal sequence is 
a part of the long exact sequence in Lemma \ref{coLES}, 
the bottom horizontal sequence is 
the exact sequence for the mapping torus. 
Here and below, the mapping tori are denoted by 
$\cM(\Z^N)$ and $\cM(\Z^{N-1})$ for short. 

\begin{proposition}\label{coMTcommute}
The diagram \eqref{coMTdiagram} is commutative. 
\end{proposition}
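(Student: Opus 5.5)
We check the five squares one at a time, in the same order as in the proof of Proposition \ref{coHKcommute}. First we fix the mapping-torus exact sequence concretely. The mapping torus of $\bar\theta$ on $B:=\cM(\Z^{N-1})$ is
\[
\cM(\Z^N)=\{F\in C([0,1],B)\mid F(1)=\bar\theta^{-1}(F(0))\},
\]
or with the convention dictated by $f(t+e_N,e_N.x)=f(t,x)$. It sits in the extension
\[
0\to C_0((0,1))\otimes B\to\cM(\Z^N)\xrightarrow{\mathrm{ev}_0}B\to 0 .
\]
Its six-term sequence is the bottom row, and the unlabeled maps $K_*(\cM(\Z^N))\to K_*(\cM(\Z^{N-1}))$ are induced by evaluation at $t_N=0$.

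Squares (1) and (4) should follow directly from the formulas. For (1), a $\Z^N$-invariant function, viewed in $\cM(\Z^N)$ as a constant function, restricts under $\mathrm{ev}_0$ to the same function viewed in $\cM(\Z^{N-1})$. For (4), the restriction $H^1(\cG)\to H^1(\cH)$ forgets $f_N$. Setting $t_N=0$ in $u_\xi$ kills every term $f_J\prod_{j\in J}t_j$ with $N\in J$, and what remains is exactly $u_\eta$ for $\eta=\xi|_\cH$.

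Square (2) should follow from the equality $\tilde\mu^0\circ H^0(\theta)=K_0(\bar\theta)\circ\tilde\mu^0$, since $\bar\theta$ acts pointwise by $e_N$. Square (5) is the analogous identity $\bar\theta(u_\eta)=u_{\eta\circ\theta^{-1}}$, up to the sign and inverse conventions fixed earlier. Because $\bar\theta$ commutes with the $d_j$ for $j<N$, we have $e_N.f_J=(e_N.f)_J$. This identity should be checked exactly and not merely up to homotopy, and the convention that $\theta$ acts through pullback by $\theta^{-1}$ must be matched with how $\bar\theta$ appears in the mapping-torus sequence.

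The main obstacle is square (3), which compares the cohomological connecting map $\delta$ with the mapping-torus boundary $\partial:K_0(B)\to K_1(\cM(\Z^N))$. By linearity it suffices to treat $g=1_P$ with $P$ a $\Z^{N-1}$-invariant clopen set. As in the proof of Proposition \ref{coHKcommute}, $\delta(g)$ is represented by the cocycle $\xi$ with $f_j=0$ for $j<N$ and $f_N=g$. All mixed terms then vanish, because $d_j(g)=0$ for $j<N$, and so
\[
u_\xi(t)=\exp(2\pi i t_N g).
\]
The boundary $\partial(K_0(p))$ in a mapping-torus extension is represented by the unitary loop obtained by lifting $p$ along the interval: if $\bar\theta(p)$ is homotopic to $p$, we take $t\mapsto\exp(2\pi i t\,p)$, adjusted at the endpoints. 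For $p=g$ this is precisely $u_\xi$. The plan is therefore to compute $\partial$ through the standard exponential formula $\partial[p]=[\exp(2\pi i\,\tilde p)]$, where $\tilde p$ is a self-adjoint lift of $p$ in $\cM(\Z^N)$. Here we take $\tilde p(t)=t_N g$, a formal lift in the unitization, and check the boundary compatibility $g(e_N.x)-g(x)\in\Z$ which makes $\exp(2\pi i\tilde p)$ a genuine element. The remaining work is to pin down the sign convention for $\partial$, or else fix it so that (3) holds with the chosen identification, as was done for the Pimsner--Voiculescu boundary earlier.
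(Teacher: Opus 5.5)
This is essentially the paper's proof: squares (1), (2) and (5) hold by construction, square (4) follows by setting $t_N=0$ in $u_\xi$, and square (3) holds because $\delta(g)$ is represented by the cocycle $\xi$ with $u_\xi=\exp(2\pi i t_N g)$, whose $K_1$-class is $\partial(\tilde\mu^0(g))$. The one thing to fix is your justification of that last identification: the map $\partial\colon K_0(\cM(\Z^{N-1}))\to K_1(\cM(\Z^N))$ is the suspension isomorphism onto $K_1(C_0((0,1))\otimes\cM(\Z^{N-1}))$ followed by the map induced by the ideal inclusion, not the exponential map of your evaluation extension (that map is, up to sign, $\id-K_0(\bar\theta)$, and $t_Ng$ is not a lift of $g$ inside $\cM(\Z^N)$), so for $g=1_P$ the unitary $\exp(2\pi i t_Ng)=e^{2\pi i t_N}g+(1-g)$ is simply the Bott loop of $g$, equal to $1$ at $t_N=0$ and $t_N=1$, and no homotopy hypothesis on $\bar\theta(p)$ or endpoint adjustment is needed.
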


\begin{proof}
The squares (1) and (2) commute by construction. 

We next verify (3). 
Take $g\in C(X,\Z)^{\Z^{N-1}}$. 
There is a homomorphism 
$\xi:\cG=X\times\Z^{N-1}\times\Z\to\Z$ satisfying 
\[
\xi(x,l,0)=0\quad\text{and}\quad 
\xi(x,l,1)=g(x)\quad\forall (x,l)\in\cH=X\times\Z^{N-1}, 
\]
and $\delta(g)$ is equal to 
the equivalence class of $\xi$ in $H^1(\cG)$. 
By definition, 
$\tilde\mu^1([\xi])$ is given by the $K_1$-class of the unitary 
\[
u_\xi(t_1,t_2,\dots,t_N)=\exp2\pi i g t_N
\]
in $\cM(\Z^N\curvearrowright X)$. 
Hence we get $\tilde\mu^1(\delta(g))=\tilde\mu^1([\xi])
=K_1(u_\xi)=\partial(\tilde\mu^0(g))$ as desired. 

In order to check that (4) is commutative, 
we pick $\xi$ from $\Hom(\cG,\Z)$. 
The image of $[\xi]$ by the map $H^1(\cG)\to H^1(\cH)$ 
is the restriction $\eta:=\xi|\cH$. 
For $j=1,2,\dots,N$, we put $f_j:=\xi(\cdot,e_j)\in C(X,\Z)$. 
By definition, 
$\tilde\mu^1([\xi])$ is given by the $K_1$-class of the unitary 
\[
u_\xi(t_1,t_2,\dots,t_N):=
\exp2\pi i\left(\sum_{\emptyset\neq J\subset\{1,2,\dots,N\}}
f_J\prod_{j\in J}t_j\right)
\]
with $f_{\{j_1,j_2,\dots,j_k\}}:=
(d_{j_1}\circ\dots\circ d_{j_{k-1}})(f_{j_k})$. 
Substituting $0$ into $t_N$, one obtains 
\[
u_\xi(t_1,t_2,\dots,t_{N-1},0)=
\exp2\pi i\left(\sum_{\emptyset\neq J\subset\{1,2,\dots,N-1\}}
f_J\prod_{j\in J}t_j\right), 
\]
whose restriction to $[0,1]^{N-1}$ is equal to $u_\eta$. 
It follows that (4) is commutative. 

The square (5) commutes by construction. 
\end{proof}

The diagram \eqref{coMTdiagram} has essentially 
the same information as the diagram \eqref{coHKdiagram}. 
In particular, 
even if we use \eqref{coMTdiagram} instead of \eqref{coHKdiagram}, 
we can only reach the same conclusion regarding HK as in Section 5. 
As for GL, however, we may improve the conclusion 
thanks to the following theorem, which is due to 
Bellissard, Kellendonk and Legrand \cite{BKL01CRASParis}. 
Although the proof appears there, 
we include it here for the reader's convenience. 

\begin{lemma}[{\cite[Theorem 2]{BKL01CRASParis}}]\label{Connes}
Suppose that $N\geq3$ is odd. 
For any $\nu\in M(X\rtimes\Z^N)$ and $h\in H^1(\cG)$, 
we have $\langle[\hat\tau_\nu],\tilde\mu^1(h)\rangle=0$, 
where $\hat\tau_\nu$ is 
the cyclic $N$-cocycle on $\cM(\Z^N\curvearrowright X)$ 
arising from the tracial state $\tau_\nu$ 
and $\langle[\hat\tau_\nu],\cdot\rangle$ is the Connes pairing. 
\end{lemma}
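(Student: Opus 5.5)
The plan is to compute the pairing directly on the explicit representative $u_\xi$ of $\tilde\mu^1(h)$, using the fact that $\cM(\Z^N\curvearrowright X)$ is a \emph{commutative} $C^*$-algebra. Antisymmetry of the cocycle should then kill the pairing.

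First, I will fix the conventions. Take $\xi\in\Hom(\cG,\Z)$ with $h=[\xi]$. The unitary
\[
u_\xi=\exp(2\pi i F),\qquad F:=\sum_{\emptyset\neq J}f_J\prod_{j\in J}t_j ,
\]
is a function on $[0,1]^N$ with values in $C(X)$. It is polynomial in $t$ and locally constant in $x$, so it lies in the smooth subalgebra on which $\hat\tau_\nu$ is defined. Here
\[
\hat\tau_\nu(a_0,a_1,\dots,a_N)=\sum_{\sigma\in S_N}\sgn(\sigma)\int_{[0,1]^N}\tau_\nu\bigl(a_0\,\partial_{\sigma(1)}a_1\cdots\partial_{\sigma(N)}a_N\bigr)\,dt,
\]
with $\partial_j=\partial/\partial t_j$. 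The $\Z^N$-invariance of $\nu$ makes this integral over a fundamental domain well defined and cyclic. Since $N$ is odd, $\hat\tau_\nu$ pairs with $K_1$. Up to a nonzero normalising constant $c_N$, the Connes pairing is given by
\[
\langle[\hat\tau_\nu],K_1(u)\rangle=c_N\,\hat\tau_\nu(u^*,u,u^*,u,\dots,u),
\]
with $N+1$ alternating entries. I will take this standard formula for granted, including its independence of the representative.

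Next comes the key computation. Because the algebra is commutative and $F$ is self-adjoint, we have $\partial_j u=2\pi i(\partial_jF)u$ and $\partial_j u^*=-2\pi i(\partial_jF)u^*$. Substituting into the integrand, all factors commute. The powers of $u$ and $u^*$ cancel in pairs (there are equally many of each, since $N+1$ is even). For each $\sigma$ the integrand therefore reduces to
\[
\pm(2\pi i)^N\,\tau_\nu\Bigl(\prod_{j=1}^N\partial_{\sigma(j)}F\Bigr),
\]
where the overall sign is independent of $\sigma$ and the product is symmetric in $\sigma$. Summing against $\sgn(\sigma)$ over $S_N$ with $N\geq 2$ then gives zero pointwise in $t$, so the pairing vanishes.

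Finally, this holds for each representative $\xi$ of $h$. Since $\tilde\mu^1$ is a well-defined homomorphism on $H^1(\cG)$, we get $\langle[\hat\tau_\nu],\tilde\mu^1(h)\rangle=0$ for all $h$.

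The main point needing care is the reduction of the pairing to the scalar formula above. Two things must be checked: that $u_\xi$, viewed in the mapping torus, is a legitimate smooth element, since the boundary identifications $t\mapsto t+e_i$ intertwine with $e_i.x$; and that $\hat\tau_\nu$ is a genuine cyclic cocycle there. The latter holds because the derivations $\partial_j$ commute with the $\Z^N$-action, and $\tau_\nu\circ\int dt$ is invariant, so integrating by parts leaves no boundary terms. Once this is accepted, the vanishing is purely formal antisymmetry. The oddness of $N$ is used only so that the pairing with $K_1$ is defined at all.
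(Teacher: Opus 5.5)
Your proposal is correct and follows essentially the same route as the paper: both compute the pairing on the explicit representative $u_\xi$, and both use the commutativity of $\cM(\Z^N\curvearrowright X)$ to see that the integrand is symmetric in $\sigma$ while $\sgn(\sigma)$ is antisymmetric, so the alternating sum vanishes. The only cosmetic difference is the cancellation mechanism: the paper cancels terms in pairs by swapping $\sigma(1)$ and $\sigma(3)$ (two slots both carrying $u_\xi$, which needs $N\geq 3$), whereas you write $\partial_j u_\xi=2\pi i(\partial_jF)u_\xi$ so that the integrand becomes literally independent of $\sigma$.
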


\begin{proof}
Take $\xi\in\Hom(\cG,\Z)$ such that $h=[\xi]$. 
By definition, $\tilde\mu^1(h)=K_1(u_\xi)$. 
Let $\delta_i$ be the derivation associated with 
the coordinate $t_i$ on $\cM(\Z^N\curvearrowright X)$. 
Then, 
\begin{align*}
& \langle[\hat\tau_\nu],\tilde\mu^1(h)\rangle \\
&=\sum_{\sigma\in S_N}\sgn(\sigma)
\int_{[0,1]^N}\tau_\nu\left(u_\xi^*\delta_{\sigma(1)}(u_\xi)
\delta_{\sigma(2)}(u_\xi^*)\dots
\delta_{\sigma(N{-}1)}(u_\xi^*)\delta_{\sigma(N)}(u_\xi)\right)\ 
dt_1dt_2\dots dt_N. 
\end{align*}
Since the algebra is commutative, 
the term corresponding to $\sigma\in S_N$ is unchanged 
when $\sigma(1)$ and $\sigma(3)$ are interchanged, 
whereas its sign is reversed. 
Hence the terms cancel in pairs, and the sum is zero. 
\end{proof}

The lemma above immediately implies the following. 

\begin{theorem}[{\cite{BKL01CRASParis}}]
When $\cG$ arises from a free action of $\Z^3$ on a Cantor set $X$, 
GL holds for $\cG$. 
\end{theorem}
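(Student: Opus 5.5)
We prove GL for a free $\Z^3$-action by working in the mapping torus and using Lemma \ref{Connes}. Write $\cG=\cH\rtimes_\theta\Z$ with $\cH=X\rtimes\Z^2$. Under the isomorphism $K_0(C(X)\rtimes\Z^3)\cong K_1(\cM(\Z^3\curvearrowright X))$, the trace $\tau_{\nu,*}$ on $K_0$ corresponds, up to a fixed normalising constant and sign, to the Connes pairing $\langle[\hat\tau_\nu],\cdot\rangle$ of the cyclic $3$-cocycle with $K_1$ of the mapping torus. This is the standard compatibility of the Thom--Connes isomorphism with the Ruelle--Sullivan current, as used in \cite{BKL01CRASParis}. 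The task is therefore to show that this pairing on $K_1(\cM(\Z^3))$ takes values in $\nu(C(X,\Z))$.

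\textbf{Step 1: describe $K_1(\cM(\Z^3))$.} For the $\Z^2$-system, Theorem \ref{Z^2} gives isomorphisms after transporting along $K_i(\cM(\Z^2))\cong K_i(C(X)\rtimes\Z^2)$. Combined with Proposition \ref{coMTcommute} and the analogous homology-side comparison, this shows that $K_0(\cM(\Z^2))$ is generated by $\tilde\mu^0(H^0(\cH))$ together with the image of the homology class $H_0(\cH)$ (the latter corresponds to the degree-$2$ part under the shift). Likewise $K_1(\cM(\Z^2))\cong H^1(\cH)$ via $\tilde\mu^1$. Feeding these into the mapping torus exact sequence for $\bar\theta$, every $c\in K_1(\cM(\Z^3))$ decomposes as
\[
c = \tilde\mu^1(h) + \partial(a) + \iota(b),
\]
where $h\in H^1(\cG)$, the element $a$ comes from the homology part of $K_0(\cM(\Z^2))$, and $\iota(b)$ comes from $K_1(\cM(\Z^2))$.

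\textbf{Step 2: evaluate the pairing on each piece.}
\begin{itemize}
\item The term $\tilde\mu^1(h)$ pairs to zero by Lemma \ref{Connes}, since $N=3$ is odd.
\item The term $\iota(b)$, viewed as a class pulled back from the $2$-dimensional torus bundle, has vanishing $3$-dimensional pairing.
\item The term $\partial(a)$ corresponds on the crossed-product side exactly to $\iota$ of $\mu_0(H_0(\cH))$. Its trace therefore lies in $\nu(C(X,\Z))$ by the defining property of $\mu_0$.
\end{itemize}

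The main obstacle is Step 1: making the decomposition of $K_1(\cM(\Z^3))$ precise. One must check that the non-$\tilde\mu^1$ part of the mapping-torus sequence is accounted for by terms whose traces are computable, which amounts to the crossed-product analysis of Theorem \ref{Hirsch3a}. I would transport Theorem \ref{Hirsch3a} itself through the Connes--Thom isomorphism. It gives $K_0(C^*_r(\cG))=\mu_0(H_0(\cG))\oplus\mu^1(H^1(\cG))$, and $D_\cG\circ\mu_0$ already has the required image.

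It then remains to show that $D_\cG$ vanishes on $\mu^1(H^1(\cG))$. For this I would identify $\mu^1$, up to sign, with $\tilde\mu^1$ under the Thom isomorphism, using that both satisfy the same commutative ladder (Propositions \ref{coHKcommute} and \ref{coMTcommute}). Both restrict to the same map on $\delta(H^0(\cH))$, and a five-lemma-type uniqueness argument should match them. Lemma \ref{Connes} then gives the vanishing of traces, and GL follows.
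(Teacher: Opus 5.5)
Your final strategy matches the paper's: use the splitting $\mu_0\oplus\mu^1$ of Theorem~\ref{Hirsch3a}, switch from $\mu^1$ to the mapping-torus map $\tilde\mu^1$, identify $D_\cG(\cdot)(\nu)$ with the Connes pairing, and apply Lemma~\ref{Connes}. The gap is in how you justify the switch. A five-lemma argument shows that a map in a commutative ladder is an isomorphism. It never shows that two maps in the same ladder are equal. Suppose $\mu^1$ and $\pm\tilde\mu^1$ agree on $\delta(H^0(\cH))$ and after composing with $\partial$ resp.\ $q$. Then their difference only factors through a homomorphism $\Ker(\id-H^1(\theta))\to\Ima\iota$, and nothing forces that homomorphism to vanish.

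Such a discrepancy would actually be harmless for traces, since GL holds for $X\rtimes\Z^2$. The real problem is one level down. Agreement after $\partial$ presupposes $\mu^1=\pm\tilde\mu^1$ on $H^1(X\rtimes\Z^2)$, and there the same reasoning only gives agreement modulo $\iota(K_1(C^*_r(X\rtimes\Z)))$. The discrepancy on $H^1(\cG)$ then need no longer lie in $\Ima\iota$, and its trace is controlled only up to a factor of two by Proposition~\ref{halfGL}.

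The repair is to avoid comparing $\mu^1$ with $\tilde\mu^1$ at all; this is how the paper's ``we may use $\tilde\mu^1$ instead of $\mu^1$'' should be read. Rerun the five-lemma argument of Theorem~\ref{Hirsch3a} on the ladder \eqref{coMTdiagram}. Transport $\mu_0,\mu_1$ through the Connes--Thom isomorphism, which intertwines the Pimsner--Voiculescu and mapping-torus sequences. Start from $N=1$, where $\tilde\mu^0$ and $\tilde\mu^1$ are visibly isomorphisms. This shows that $\mu_0\oplus\tilde\mu^1$ maps $H_0(\cG)\oplus H^1(\cG)$ onto $K_1(\cM(\Z^3))\cong K_0(C^*_r(\cG))$, and Lemma~\ref{Connes} then finishes the proof.

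This is really your Step~1 done correctly, but as written it contains a term that does not exist. In the sequence $K_0(\cM(\Z^2))\to K_1(\cM(\Z^3))\xrightarrow{q}K_1(\cM(\Z^2))$, the group $K_1(\cM(\Z^2))$ is the target of $q$, not a source of classes $\iota(b)$. The correct decomposition runs as follows:
the component $q(c)$ lies in $\Ker(\id-K_1(\bar\theta))=\tilde\mu^1(\Ker(\id-H^1(\theta)))$ and lifts into $\tilde\mu^1(H^1(\cG))$ by square~(4);
the $\tilde\mu^0(H^0(\cH))$-part of $K_0(\cM(\Z^2))$ is carried into $\tilde\mu^1(\delta(H^0(\cH)))$ by square~(3).
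Hence $c=\tilde\mu^1(h)+\partial(a)$ with $a$ homological. Your first and third bullets suffice, and the unsupported ``pulled back'' vanishing claim in the second bullet is not needed.

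Note also that the $\Z^2$ input required here is that $\tilde\mu^1$, not $\mu^1$, is an isomorphism on $H^1(X\rtimes\Z^2)$. That comes from \eqref{coMTdiagram}, not from transporting Theorem~\ref{Z^2}.
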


\begin{proof}
By Theorem \ref{Hirsch3a}, 
$\mu_0\oplus\mu^1:H_0(\cG)\oplus H^1(\cG)\to K_0(C^*_r(\cG))$ 
is an isomorphism. 
As mentioned above, we may use $\tilde\mu^1$ instead of $\mu^1$ 
under the identification of 
$K_1(\cM(\Z^3\curvearrowright X))$ with $K_0(C^*_r(\cG))$. 
Under this identification, 
the Connes pairing with $[\hat\tau_\nu]$ agrees, 
up to the usual sign convention, 
with the trace pairing $D_\cG(\cdot)(\nu)$. 
It follows from the lemma above that 
the image of $\tilde\mu^1$, under the above identification, 
is contained in the kernel of $D_\cG$. 
Hence GL holds for $\cG$. 
\end{proof}

We conclude this section by considering GL for $\Z^5$. 
To do so, assume that $N\geq 5$ is odd and 
consider the commutative diagram: 
\begin{equation}\label{GLdiagram3}
\vcenter{
\xymatrix@M=8pt{
&&
H_1(\cH) \ar[r]^-{\id-H_1(\theta)} 
\ar[d]_-{\mu_1} & 
H_1(\cH) \ar[d]_-{\mu_1} \\
K_0(C^*_r(\cH)) \ar[r]^-{\iota} \ar[d]^-{\cong} & 
K_0(C^*_r(\cG)) \ar[r] \ar[d]^-{\cong} & 
K_1(C^*_r(\cH)) \ar[r]^-{\id-K_1(\bar\theta)} \ar[d]^-{\cong} & 
K_1(C^*_r(\cH)) \ar[d]^-{\cong} \\
K_0(\cM(\Z^{N-1})) \ar[r] & 
K_1(\cM(\Z^N)) \ar[r]^-{q} & 
K_1(\cM(\Z^{N-1})) \ar[r]^-{\id-K_1(\bar\theta)} & 
K_1(\cM(\Z^{N-1})) \\
&&
H^1(\cH) \ar[r]^-{\id-H^1(\theta)} 
\ar[u]_-{\tilde\mu^1} & 
H^1(\cH) \ar[u]_-{\tilde\mu^1} 
}}
\end{equation}
Here $q$ denotes the map induced by evaluation at $t_N=0$. 

\begin{lemma}
For any $h\in\Ker(\id-H^1(\theta))$, 
there exists a unitary $w\in\cM(\Z^N\curvearrowright X)$ 
such that $K_1(q(w))=\tilde\mu^1(h)$. 
\end{lemma}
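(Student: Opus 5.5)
The plan is to realize $h$ as the restriction of a cohomology class on all of $\cG$, and then take $w$ to be the mapping-torus unitary attached to that class.

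First, I use the cohomology long exact sequence of Lemma \ref{coLES} for $\cG=\cH\rtimes_\theta\Z$. It contains the exact segment
\[
H^1(\cG)\longrightarrow H^1(\cH)\xrightarrow{\ \id-H^1(\theta)\ }H^1(\cH).
\]
Since $h\in\Ker(\id-H^1(\theta))$, there is $\xi\in\Hom(\cG,\Z)$ whose restriction $\eta:=\xi|\cH$ represents $h$. So $h=[\eta]$ is the image of $[\xi]$ under restriction.

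If one wants to see this by hand rather than cite exactness, the construction is as follows. Fix $\eta\in\Hom(\cH,\Z)$ representing $h$, and put $f_j:=\eta(\cdot,e_j)$ for $j\leq N-1$. The condition $H^1(\theta)(h)=h$ says that $e_N.f_j-f_j$ is a coboundary. This coboundary is compatible across all $j$ at once: there is a single $g\in C(X,\Z)$ with $d_N(f_j)=d_j(g)$ for every $j\leq N-1$. Setting $f_N:=g$ then satisfies the cocycle identities $d_j(f_k)=d_k(f_j)$ for all $j,k\leq N$, and these define $\xi$ on $X\rtimes\Z^N$. The only delicate point here is tracking the sign conventions, since $\theta$ acts on cohomology by pullback through $\theta^{-1}$. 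Citing Lemma \ref{coLES} avoids this bookkeeping entirely, so I would rely on it.

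Next, I set $w:=u_\xi\in\cM(\Z^N\curvearrowright X)$, the unitary constructed before Proposition \ref{coMTcommute}. It is a unitary in the mapping torus by the defining relations. The computation in the proof of square (4) of Proposition \ref{coMTcommute} shows that evaluating at $t_N=0$ gives
\[
u_\xi(t_1,\dots,t_{N-1},0)=u_\eta(t_1,\dots,t_{N-1}).
\]
In other words, $q(w)=u_\eta$ as elements of $\cM(\Z^{N-1}\curvearrowright X)$. Hence $K_1(q(w))=K_1(u_\eta)=\tilde\mu^1([\eta])=\tilde\mu^1(h)$, where the last step uses that $\tilde\mu^1$ is well defined on cohomology classes.

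I expect the main obstacle to be checking that the map $q$ in \eqref{GLdiagram3} really is evaluation at $t_N=0$ under the chosen identification of $\cM(\Z^N)$ as the mapping torus of $\bar\theta$. This identification should be compatible with the one used in square (4), so that no twist by $\bar\theta$ appears at the endpoint. Beyond that, the argument is just exactness plus the explicit formula for $u_\xi$.
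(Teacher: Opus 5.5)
Your proposal is correct and is essentially the paper's argument. The paper's explicit unitary $w=\exp2\pi i\bigl(gt_N+\sum_J(f_J+d_J(g)t_N)\prod_{j\in J}t_j\bigr)$ is exactly $u_\xi$ for the extension $\xi$ of $\eta$ with $\xi(\cdot,e_N)=g$, the same $g$ as in your by-hand construction; the paper checks the boundary conditions and $q(w)=u_\eta$ directly instead of citing Lemma \ref{coLES} and square (4) of Proposition \ref{coMTcommute}, and the obstacle you anticipate does not arise, since $q$ is evaluation at $t_N=0$ inside $C([0,1]^N,C(X))$ and the mapping-torus identification is just slicing in $t_N$.
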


\begin{proof}
Choose $\eta\in\Hom(\cH,\Z)$ such that $h=[\eta]$. 
For $j=1,2,\dots,N{-}1$, set $f_j:=\eta(\cdot,e_j)\in C(X,\Z)$. 
By definition, 
$\tilde\mu^1([\eta])$ is given by the $K_1$-class of the unitary 
\[
u_\eta(t_1,t_2,\dots,t_{N-1}):=
\exp2\pi i\left(\sum_{\emptyset\neq J\subset\{1,2,\dots,N-1\}}
f_J\prod_{j\in J}t_j\right)
\]
with $f_{\{j_1,j_2,\dots,j_k\}}:=
(d_{j_1}\circ\dots\circ d_{j_{k-1}})(f_{j_k})$. 
Since $\eta-\eta\circ\theta^{-1}$ is a coboundary, 
there exists $g\in C(X,\Z)$ such that 
\[
(\eta-\eta\circ\theta^{-1})(x,e_j)=(g-e_j.g)(x)
\]
holds for any $x\in X$ and $j=1,2,\dots,N{-}1$. 
Define a unitary $w\in C([0,1]^N,C(X))$ by 
\[
w(t_1,t_2,\dots,t_N):=
\exp2\pi i\left(gt_N+\sum_{\emptyset\neq J\subset\{1,2,\dots,N-1\}}
(f_J+d_J(g)t_N)\prod_{j\in J}t_j\right), 
\]
where $d_{\{j_1,j_2,\dots,j_k\}}:=d_{j_1}\circ\dots\circ d_{j_k}$. 
When substituting $0$ into $t_i$ for some $i\neq N$ we have 
\[
\exp2\pi i\left(gt_N+\sum_{i\notin J}
(f_J+d_J(g)t_N)\prod_{j\in J}t_j\right), 
\]
and when substituting $1$ into $t_i$ we have 
\begin{align*}
& \exp2\pi i\left(gt_N+f_i+d_i(g)t_N+\sum_{i\notin J}
\left(f_J+d_J(g)t_N+d_i(f_J)+d_i(d_J(g)t_N)\right)
\prod_{j\in J}t_j\right) \\
&=\exp2\pi i\left(e_i.gt_N+\sum_{i\notin J}
(e_i.f_J+e_i.d_J(g)t_N)\prod_{j\in J}t_j\right) \\
&=\exp2\pi i\left(e_i.\left(gt_N+\sum_{i\notin J}
(f_J+d_J(g)t_N)\prod_{j\in J}t_j\right)\right). 
\end{align*}
Therefore, for any $t_N\in[0,1]$, 
$w(\cdot,\dots,\cdot,t_N)$ belongs to $\cM(\Z^{N-1}\curvearrowright X)$. 
At $t_N=0$, 
\[
w(t_1,t_2,\dots,t_{N-1},0)=u_\eta(t_1,t_2,\dots,t_{N-1}). 
\]
Moreover, 
\begin{align*}
w(t_1,t_2,\dots,t_{N-1},1)
&=\exp2\pi i\left(g+\sum_{\emptyset\neq J\subset\{1,2,\dots,N-1\}}
(f_J+d_J(g))\prod_{j\in J}t_j\right) \\
&=\exp2\pi i\left(\sum_{\emptyset\neq J\subset\{1,2,\dots,N-1\}}
(f_J+d_J(g))\prod_{j\in J}t_j\right) \\
&=\bar\theta(w(t_1,t_2,\dots,t_{N-1},0)), 
\end{align*}
because $g$ is integer-valued and 
\[
f_J+d_J(g)
=(d_{j_1}\circ\dots\circ d_{j_{k-1}})(f_{j_k}+d_{j_k}(g))
=(d_{j_1}\circ\dots\circ d_{j_{k-1}})(f_{j_k}\circ\theta^{-1})
=f_J\circ\theta^{-1}
\]
for any $J=\{j_1,j_2,\dots,j_k\}$. 
It follows that $w$ is in $\cM(\Z^N\curvearrowright X)$ and 
$q(w)=u_\eta$. 
\end{proof}

\begin{theorem}\label{GL5}
If $\cG$ arises from a free action of $\Z^5$ on a Cantor set $X$, 
\[
\Ima D_\cG\subset\frac{1}{2}\Ima(D_\cG\circ\mu_0). 
\]
Thus GL holds for $\cG$ up to a factor of two. 
\end{theorem}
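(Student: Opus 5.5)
We write $\Z^5=\Z^4\times\Z$ with trivial action, so $\theta$ is the action of $e_5$ on $\cH:=X\rtimes\Z^4$ and $\cG=\cH\rtimes_\theta\Z$. We use diagram \eqref{GLdiagram3} with $N=5$. Since $\Z^4$ is of class $(4a+)$, Theorem \ref{Hirsch4a+} (2) shows that $\mu_1\oplus\mu^1:H_1(\cH)\oplus H^1(\cH)\to K_1(C^*_r(\cH))$ is an isomorphism. By Proposition \ref{coHKcommute}, or equivalently Proposition \ref{coMTcommute}, this isomorphism intertwines $\id-K_1(\bar\theta)$ with $(\id-H_1(\theta))\oplus(\id-H^1(\theta))$. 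We will replace $\mu^1$ by $\tilde\mu^1$ via the identification $K_1(C^*_r(\cH))\cong K_1(\cM(\Z^4))$, as in the proof for $\Z^3$. For $c\in K_0(C^*_r(\cG))$, the element $\partial(c)$ lies in $\Ker(\id-K_1(\bar\theta))$. Hence $\partial(c)=\mu_1(h_1)+\tilde\mu^1(h)$ with $h_1=H_1(\theta)(h_1)$ and $h\in\Ker(\id-H^1(\theta))$.

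First we treat the homology part. By exactness there is $c_1\in K_0(C^*_r(\cG))$ with $\partial(c_1)=\mu_1(h_1)$. Proposition \ref{halfGL} then gives
\[
D_\cG(c_1)\in\tfrac12\Ima(D_\cG\circ\mu_0)+\Ima(D_\cG\circ\iota).
\]
Theorem \ref{GL4} (1) gives $\Ima D_\cH\subset\frac12\Ima(D_\cH\circ\mu_0)$, and $\mu_0$ is compatible with $\iota$. Therefore $\Ima(D_\cG\circ\iota)\subset\frac12\Ima(D_\cG\circ\mu_0)$, so $D_\cG(c_1)$ lies in the required set.

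Next we treat the cohomology part. The preceding lemma gives a unitary $w\in\cM(\Z^5\curvearrowright X)$ with $K_1(q(w))=\tilde\mu^1(h)$. Let $c_2\in K_0(C^*_r(\cG))$ correspond to $K_1(w)\in K_1(\cM(\Z^5))$. Then $c-c_1-c_2\in\Ima\iota$, and that remainder is handled as above. It remains to show $D_\cG(c_2)=0$. Under the identification $K_1(\cM(\Z^5))\cong K_0(C^*_r(\cG))$, the trace pairing $D_\cG(\cdot)(\nu)$ agrees up to sign with the Connes pairing against the cyclic $5$-cocycle $\hat\tau_\nu$. The unitary $w$ is of the form $\exp 2\pi i F$ with $F$ a real polynomial in $t_1,\dots,t_5$ with $C(X)$-valued coefficients, so $w$ takes values in a commutative algebra. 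The antisymmetry argument of Lemma \ref{Connes} applies verbatim: swapping $\sigma(1)$ and $\sigma(3)$ leaves each integrand unchanged but flips its sign, so the terms cancel in pairs. That lemma only used commutativity and the oddness of $N$, not the specific form of $u_\xi$. Hence $\langle[\hat\tau_\nu],K_1(w)\rangle=0$ and $D_\cG(c_2)=0$.

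Combining the three contributions gives $D_\cG(c)\in\frac12\Ima(D_\cG\circ\mu_0)$. The main obstacle is checking that the identifications in the vertical isomorphisms of \eqref{GLdiagram3} are compatible. Specifically, one must verify that the mapping-torus boundary sequence corresponds to the Pimsner--Voiculescu sequence, so that $\tilde\mu^1$ on $\cH$ can replace $\mu^1$ while keeping the isomorphism with $\mu_1$. One must also verify that traces correspond to the Connes pairing, up to a harmless sign, on the classes $K_1(w)$. I would take both facts as standard, following the identification already used in the $\Z^3$ case.
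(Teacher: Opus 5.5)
Your proposal is correct and follows the paper's proof. You split $\partial(c)$ using the isomorphism $\mu_1\oplus\tilde\mu^1$ from Theorem \ref{Hirsch4a+} (2), handle the $H_1$-part by Proposition \ref{halfGL} together with Theorem \ref{GL4} (1) for $\cH=X\rtimes\Z^4$, and kill the $H^1$-part by lifting to the unitary $w$ of the preceding lemma and applying the commutativity/antisymmetry argument of Lemma \ref{Connes}. The identifications you take as standard (mapping-torus sequence versus Pimsner--Voiculescu, and trace versus Connes pairing) are exactly the ones the paper itself uses without further proof.
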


\begin{proof}
We apply the argument above to the case $N=5$. 
Under the identification of 
$K_1(C^*_r(\cH))$ and $K_1(\cM(\Z^4\curvearrowright X))$, 
the map $\mu_1\oplus\tilde\mu^1$ is an isomorphism 
by Theorem \ref{Hirsch4a+} (2). 
For any $h\in H^1(\cH)$ with $h=H^1(\theta)(h)$, 
the lemma above tells us that 
there exists a unitary $w\in\cM(\Z^5\curvearrowright X)$ 
such that $K_1(q(w))=\tilde\mu^1(h)$. 
As in the proof of Lemma \ref{Connes}, 
we have $\langle[\hat\tau_\nu],K_1(w)\rangle=0$ 
for any $\nu\in M(\cG)$. 
This, together with Proposition \ref{halfGL} and Theorem \ref{GL4}, 
implies 
\[
\Ima D_\cG\subset\frac{1}{2}\Ima(D_\cG\circ\mu_0)+\Ima(D_\cG\circ\iota)
\subset\frac{1}{2}\Ima(D_\cG\circ\mu_0), 
\]
as desired. 
\end{proof}

\begin{remark}
We compare Theorem \ref{GL5} 
with the denominator appearing in the Chern character approach. 
In the framework of tiling spaces considered 
in \cite[Remark 9.6]{ADRS21JGP}, 
it is observed that, in dimension $d$, 
the top-degree part of the Chern character 
\[
\operatorname{ch}_d:K^{-d}(\Omega_T)\to\check H^d(\Omega_T;\Q) 
\]
does not necessarily take values in $\check H^d(\Omega_T;\Z)$, 
but it does after multiplication by $\lceil d/2\rceil!$. 
Thus, in that approach, 
the gap-labelling statement is obtained only up to this factor 
in higher dimensions. 
In particular, 
the corresponding factor is $2$ in dimension $4$, 
and $3!=6$ in dimension $5$. 
Theorem \ref{GL5} gives, for free $\Z^5$-systems, 
the stronger inclusion
\[
\Ima D_\cG\subset\frac{1}{2}\Ima(D_\cG\circ\mu_0). 
\]
The factor $1/2$ in our result has a different origin: 
it comes from the de la Harpe--Skandalis determinant and 
the transposition decomposition in topological full groups, 
rather than from the denominators in the Chern character. 
\end{remark}

\newcommand{\noopsort}[1]{}
\providecommand{\bysame}{\leavevmode\hbox to3em{\hrulefill}\thinspace}
\providecommand{\MR}{\relax\ifhmode\unskip\space\fi MR }
\providecommand{\MRhref}[2]{%
  \href{http://www.ams.org/mathscinet-getitem?mr=#1}{#2}
}
\providecommand{\href}[2]{#2}

\end{document}